\documentclass[12pt]{article}
\usepackage{amssymb,amsmath,amsfonts,mathrsfs,amsthm,epsfig,latexsym,color}
\usepackage{enumitem,geometry}
\usepackage{fourier}
\usepackage[all]{xy}
\usepackage{setspace}
\usepackage{indentfirst, float, subcaption}
\onehalfspacing

\geometry{
 top=3cm,left=3cm,right=3cm,bottom=3cm
}

\makeatletter
\newcommand{\subsectionruninhead}{\@startsection{subsection}{2}{0mm}
{-\baselineskip}{-0mm}{\bf\large}}
\newcommand{\subsubsectionruninhead}{\@startsection{subsubsection}{3}{0mm}
{-\baselineskip}{-0mm}{\bf\normalsize}}
\makeatother

\newtheorem*{theorem*}{Theorem}
\newtheorem*{proof*}{Proof}
\newtheorem*{proposition*}{Proposition}
\newtheorem*{corollary*}{Corollary}
\newtheorem*{claim*}{Claim}
\newtheorem*{remark*}{Remark}
\newtheorem*{problem*}{Problem}
\newtheorem{theorem}{Theorem}[section]
\newtheorem{proposition}{Proposition}[section]

\newtheorem{corollary}[proposition]{Corollary}
\newtheorem{lemma}[proposition]{Lemma}

\theoremstyle{definition}
\newtheorem{definition}[proposition]{Definition}
\theoremstyle{remark}
\newtheorem{remark}[proposition]{Remark}

\numberwithin{equation}{section}

\def\NN{\mathbb{N}}
\def\RR{\mathbb{R}}

\def\TT{\mathbb{T}}
\def\ZZ{\mathbb{Z}}

\def\d0{\delta^{(0)}}

\setcounter{tocdepth}{3}

\begin{document}
\title{Derived-from-expanding endomorphism on $\mathbb{T}^2$}
\author{Daohua Yu}
\date{\today}
\maketitle
\begin{abstract}
Assume that $f$ is a $C^r(r\geq 3)$ specially partially hyperbolic endomorphism on the 2-torus which is homotopic to an expanding linear endomorphism $A$ with irrational eigenvalues. We prove that $f$ and $A$ are topologically conjugate, if and only if $f$ is area-expanding. If $f$ is area-expanding and the center bundle is $C^1$, then the topological conjugacy between $f$ and $A$ is $C^{\max\{r-3,1\}+\alpha}$. In particular, if $r=\omega$, the conjugacy is $C^{\omega}$.
\end{abstract}
\section{Introduction}
\indent  In 1999, Diaz, Pujals, and Ures \cite{DPU} proved that robustly transitive diffeomorphisms on three-dimensional manifolds are partially hyperbolic. Important examples of robustly transitive partially hyperbolic diffeomorphisms, which are given by Shub \cite{Shub2} and Mane \cite{Mane} in the 70s, are all derived-from-Anosov (i.e. homotopic to Anosov) diffeomorphisms.
However, the classification of all robustly transitive derived-from-Anosov diffeomorphisms remains an open problem, even on the 3-torus. In order to address the problem, Pujals suggested that it might be a good choice to classify derived-from-expanding (i.e. homotopic to expanding) maps that are robustly transitive on the 2-torus \cite{Andersson, AnderssonRanter}.\\
\indent For any continuous map $f$ from the 2-torus to itself, there is a unique linear toral endomorphism $A$ which is homotopic to $f$. If $A$ is expanding, then there is a unique semi-conjugacy between $f$ and $A$. In 1969, Shub \cite{Shub} proved that if $f$ is expanding, then $f$ is topologically conjugate to $A$.
Moreover, any expanding endomorphism is topologically conjugate to an affine expanding endomorphism of an infranilmanifold \cite{Shub, Franks, Gromov}. \\
\indent We decide to classify derived-from-expanding partially hyperbolic endomorphisms based on topological or smooth conjugacy.

\begin{definition}\cite{AnderssonRanter}
Let $f:\TT^2\to \TT^2$ be a $C^1$ endomorphism, i.e., a local diffeomorphism on $\TT^2$.
If there is a $Df-$invariant unstable closed convex cone $\mathscr{C}_x\subset T_x\mathbb{T}^2$ for all $x\in \mathbb{T}^2$, i.e.,
$$Df(x)\mathscr{C}_x\subset {\rm int}\mathscr{C}_{fx}, \qquad |Df(x)v|>1,\ \text{for all}\, v\in \mathscr{C}_x\ \text{with}\ |v|=1,$$
then $f$ is called a \emph{partially hyperbolic endomorphism}.
\end{definition}
\begin{remark}
Without loss of generality, we use one time iteration of $f$ instead of $l$ times iterations of $f$ in \cite{AnderssonRanter}. This kind of treatment is also used in \cite{Heshiwang}.
\end{remark}

\begin{problem*}
Assume that $f$ is a partially hyperbolic endomorphism on $\TT^2$, which is homotopic to an expanding linear map $A$.
\begin{enumerate}
\item  It is well known that there is a topological semi-conjugacy $h$, such that $hf=Ah$. Under what kind of condition, $f$ will be topologically conjugate to $A$?
\item Moreover, what can be said about the smoothness of $h$?
\end{enumerate}
\end{problem*}

In the following, we give some related concepts of partially hyperbolic endomorphisms. Later, we give partial results of the question.

According to \cite{HallHammerlindl}, for a partially hyperbolic endomorphism $f$ on $\mathbb{T}^2$, there is a one dimensional $Df-$invariant line bundle $$E^c_x=\bigcap_{n=0}^{+\infty}(Df^n(x))^{-1}(\mathscr{C}_{f^nx})^c,$$
which is called the center bundle.

\begin{definition}
Let $f$ be a partially hyperbolic endomorphism on $\mathbb{T}^2$.
If there is a $Df-$invariant unstable bundle $E^u$ such that for all $\{x_n\}_{n\in\ZZ}$ such that $f(x_n)=x_{n+1}$ and $x_0=x$,
$$E^u_x=\bigcap_{n=0}^{+\infty}Df^n(x_{-n})\mathscr{C}_{x_{-n}},$$
i.e., the unstable bundle is independent of the choice of backward orbits, then $f$ is called \emph{specially} partially hyperbolic endomorphism.
\end{definition}
\indent  Assume that $f$ is a specially partially hyperbolic endomorphism, which is homotopic to an expanding linear map $A$. In 2024, Andersson and Ranter \cite{AnderssonRanter} proved that when $A$ has integer eigenvalues, $f$ is topologically conjugate to $A$ if and only if $f$ is transitive. Additionally, they showed that if $f$ is strong volume expanding, then $f$ is transitive. (Strong volume expanding, i.e., the Jacobian of $f$ at every point is larger than the spectral radius of $A$.) However, their work does not address the case, where $A$ has irrational eigenvalues. \\
\indent In the irrational case, we prove that for a specially partially hyperbolic $f$, $f$ and $A$ are topologically conjugated, if and only if $f$ is area-expanding. Specifically, we can prove Theorem \ref{conjugacy}.
\begin{theorem}\label{conjugacy}
Assume that $A$ is an expanding linear endomorphism on the 2-torus,
with irrational eigenvalues $|\lambda^u_A|>|\lambda^c_A|>1$, $f$ is a $C^r(r\geq 3)$ specially partially hyperbolic endomorphism which is homotopic to $A$. Then the semi-conjugacy $h$ between $f$ and $A$ is a topological conjugacy, if and only if $f$ is area expanding, i.e.,
\begin{align}\label{*}
\exists \rho>1, \exists C>0, s.t., \forall x\in\TT^2, \forall n\in\NN,\quad
|\det(Df^n(x))| > C\rho^n.
\end{align}
\end{theorem}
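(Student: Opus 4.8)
The plan is to replace \eqref{*} by the equivalent statement that every $f$-invariant probability measure has positive ``volume exponent''. Since $\det Df^{n+m}(x)=\det Df^m(f^nx)\det Df^n(x)$, the sequence $n\mapsto\inf_x\log|\det Df^n(x)|$ is superadditive, so $\Lambda:=\lim_n\frac1n\inf_x\log|\det Df^n(x)|$ exists, equals $\sup_n$ of the same quantity, and by the standard ergodic-optimization identity for the continuous function $\log|\det Df|$ it equals $\min_{\mu\in\mathcal M_f(\TT^2)}\big(\chi^u(\mu)+\chi^c(\mu)\big)$; thus \eqref{*} holds $\iff\Lambda>0\iff\chi^u(\mu)+\chi^c(\mu)>0$ for every $\mu$. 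I lift to $\RR^2$ in the eigenbasis coordinates of $A$, write $\tilde h=\operatorname{id}+\psi$ with $\psi$ bounded and periodic, so the slow coordinate satisfies $h_2\circ\tilde f=\lambda^c_A h_2$, and I use the following structural facts (from the semiconjugacy and from \cite{HallHammerlindl,AnderssonRanter}): $\tilde f$ is a homeomorphism of $\RR^2$ (a proper local diffeomorphism of a simply connected surface); since $f$ is \emph{specially} partially hyperbolic, $E^u$ integrates to a foliation $\mathcal W^u$ whose lifted leaves are properly embedded lines quasi-isometric to the fast eigenaxis, with Hausdorff leaf space $\mathcal L\cong\RR$; $h$ carries $\mathcal W^u$-leaves into the linear unstable leaves $\{h_2=\mathrm{const}\}$ of $A$; and each fibre $\tilde h^{-1}(\mathrm{pt})$ is a continuum of diameter $\le 2\|\psi\|_\infty$.

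For \emph{necessity} (conjugacy $\Rightarrow$ \eqref{*}) I argue the contrapositive. If $f$ is not area expanding there is $\mu_0$ with $\chi^u(\mu_0)+\chi^c(\mu_0)\le0$; partial hyperbolicity forces $\chi^u(\mu_0)\ge\log\sigma>0$ with $\sigma:=\inf\{|Df(x)v|:v\in E^u_x,|v|=1\}$, hence $\chi^c(\mu_0)<0$ and $\mu_0$ is hyperbolic with stable Oseledets direction the invariant bundle $E^c$. By the Pesin stable–manifold theorem for $C^{1+\alpha}$ endomorphisms, a positive-$\mu_0$-measure set carries local stable manifolds tangent to $E^c$ of uniformly positive length. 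If $p\ne q$ lie on one of them (lift to $\RR^2$), then $\operatorname{diam}\tilde f^n(\widetilde{W^s_{\mathrm{loc}}})\to0$, so $A^n(\tilde h p-\tilde h q)=\tilde h\tilde f^n p-\tilde h\tilde f^n q\to0$, while $|A^nw|\ge c(\lambda^c_A)^n|w|$ for all $w$; hence $\tilde h$ is constant on that nondegenerate curve and $h$ is not injective.

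For \emph{sufficiency} (\eqref{*} $\Rightarrow$ conjugacy), first note $\tilde h$ is injective on each unstable leaf $L$: two distinct $p,q\in L$ with $\tilde h(p)=\tilde h(q)$ would have $\tilde f^np,\tilde f^nq$ within $2\|\psi\|_\infty$ of $A^n\tilde h(p)$ while their leaf-distance grows like $\sigma^n$, contradicting the quasi-isometry of $L$. Together with the facts that $h_2$ is constant on $\mathcal W^u$-leaves and that fibres of $\tilde h$ are connected, this gives: $h$ is injective $\iff$ the induced semiconjugacy $\bar h_2\colon\mathcal L\to\RR$ (from the leaf-space map $\bar f$ to $x\mapsto\lambda^c_A x$; monotone, with interval point-preimages) is injective. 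Suppose it is not; then some level $t_0$ has $\bar h_2^{-1}(t_0)$ a nondegenerate closed interval of leaves, so its union $\hat S_0\subset\RR^2$ has nonempty interior, and I pick a nonempty bounded open $P\subset\hat S_0$. The crux — where the \emph{irrationality} of the eigenvalues is essential — is that $f^n$ is injective on $\pi(\hat S_0)$: if $w_1,w_2\in\pi\hat S_0$ satisfy $f^nw_1=f^nw_2$, lift to $\tilde w_i$ on leaves with $\bar h_2(L_i)=t_0+c(\gamma_i)$, $\gamma_i\in\ZZ^2$; then $\tilde f^n\tilde w_1-\tilde f^n\tilde w_2=\delta\in\ZZ^2$, and comparing slow coordinates gives $c\big(\delta-A^n(\gamma_1-\gamma_2)\big)=0$, so the integer vector $\delta-A^n(\gamma_1-\gamma_2)$ lies on the fast eigenaxis, hence vanishes (an axis of irrational slope contains no nonzero integer vector); then $\tilde f^n\tilde w_1=\tilde f^n(\tilde w_2+\gamma_1-\gamma_2)$ and injectivity of the homeomorphism $\tilde f^n$ gives $w_1=w_2$. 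Consequently $\operatorname{Leb}\big(f^n(\pi P)\big)=\int_{\pi P}|\det Df^n|>C\rho^n\operatorname{Leb}(\pi P)$ for all $n$ — impossible, since the left side is $\le\operatorname{Leb}(\TT^2)=1$ while $\rho>1$ and $\operatorname{Leb}(\pi P)>0$. Hence $\bar h_2$ is injective and $h$ is a homeomorphism.

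I expect the main obstacle to be \emph{arriving} at the reduction ``$h$ injective $\iff\bar h_2$ injective on $\mathcal L$'' with the leaf space genuinely under control: establishing that $\mathcal W^u$ has Hausdorff leaf space $\cong\RR$ with leaves quasi-isometric to the fast axis, that the semiconjugacy respects $\mathcal W^u$ and has connected fibres, and that a nondegenerate collapsed interval of leaves really sweeps out a planar region of positive measure; and, on the necessity side, carrying out Pesin theory in the endomorphism setting. Once these inputs are secured, the two dynamical steps are short — the Pesin stable manifold is collapsed by $h$, and $f^n$ is injective on the collapsed region — and it is precisely in this last injectivity that the irrational eigenvalues do the work, in contrast with the integer-eigenvalue case of \cite{AnderssonRanter}, where the correct hypothesis is transitivity because that injectivity can fail.
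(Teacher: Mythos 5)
Your proposal is correct in substance but takes a genuinely different route from the paper in both directions. For necessity, the paper argues pointwise and elementarily: if \eqref{*} fails, a compactness argument produces a single point $x$ with $|\det Df^n(x)|\le 1$ for all $n$, the angle/determinant identity gives $\|Df^n|_{E^c_f(x)}\|\le C\mu^{-n}$, and uniform continuity then yields a small center segment through $x$ that contracts under iteration, contradicting injectivity of $h$ together with the expansion of $A$; your route via superadditivity of $\inf_x\log|\det Df^n|$, the ergodic-optimization identity, and the Pesin stable manifold theorem for endomorphisms reaches the same collapsed curve, at the cost of heavier machinery (ergodic minimizing components, nonuniform stable manifolds) where the paper needs only continuity of $Df$. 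For sufficiency the divergence is more substantial: the paper uses area expansion to verify the bunching hypothesis of Theorem \ref{bundle smoothness}, so that $E^u$ is $C^2$ (Proposition \ref{small neighborhood}), then applies Denjoy's theorem to the $C^2$ Poincar\'e return map of $\mathscr{F}^u$ (irrational rotation number because lifted leaves stay at bounded distance from an irrational line) to get minimality of $\mathscr{F}^u$, and concludes because the set of injectivity points is closed and saturated by unstable leaves; you instead use area expansion directly, showing the collapsed region is an open union of unstable leaves on which every $f^n$ is injective (the integer vector $\delta-A^n(\gamma_1-\gamma_2)$ lies on an eigenaxis of irrational slope, hence vanishes), so its image would have Lebesgue measure growing like $\rho^n$, which is absurd. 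Your argument avoids any smoothness bootstrap (no $C^2$ section theorem, no Denjoy) and isolates exactly where irrationality enters; the paper's route produces the $C^2$ unstable bundle and the minimality of $\mathscr{F}^u$, which it reuses in the smooth-conjugacy part.

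Two of your inputs need more care than a citation. First, you use that $\tilde h$ sends unstable leaves into the lines $\{h_2=\mathrm{const}\}$ parallel to the fast eigenaxis; in this paper that identification (Propositions \ref{center foliation} and \ref{unstable foliation}) is proved only under the extra hypothesis that $E^c$ is $C^1$, which Theorem \ref{conjugacy} does not assume. What is available unconditionally (Lemma \ref{P} together with Proposition \ref{bounded distance}) is that $\tilde h$ maps each unstable leaf onto a line of one of the two $A$-invariant linear foliations, both of irrational slope; your crux computation works verbatim with either eigenvalue in place of $\lambda^c_A$, so you should phrase it that way rather than rely on the unproved identification. Second, connectivity of the fibres of $\tilde h$ is exactly what makes the collapsed set of leaves order-convex, hence an open region, and it is not established in the paper's proof of this theorem (Proposition \ref{h^{-1}} again assumes $E^c$ is $C^1$ and only locates a fibre inside a center leaf); it is true and can be proved directly, e.g.\ by writing $\tilde h^{-1}(y)=\bigcap_{n\ge 0}\tilde f^{-n}\overline{B}(A^ny,K)$ for a suitably adapted radius $K$, a nested intersection of compact connected sets, but you should supply such an argument rather than attribute it to the references.
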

After building topologically conjugacy between $f$ and $A$, we begin to investigate the smoothness of the conjugacy.
Below, we briefly recall some earlier results concerning the smooth conjugacy.\\
\indent In the history, the same periodic data of two smooth maps may imply that they are smooth conjugate, where periodic data refers to the Lyapunov exponents at corresponding periodic points. For $C^{k+\alpha}(k\geq 1)$ expanding maps on a circle, it is well known that maps with the same degree are topologically conjugate. Furthermore, if their periodic data are the same, the conjugacy between them is $C^k$ \cite{Arteaga}.
For $C^k(k\geq 2)$ Anosov diffeomorphisms on a two-dimensional manifold, de la Lave \cite{de la Llave} proved in 1992 that if their periodic data are the same then the conjugacy is $C^{k-\epsilon}$.
In 2008, Gogolev \cite{G08} proved that if their periodic data are the same, then a $C^2$ Anosov diffeomorphism on the 3-torus, lying within a $C^1$-neighborhood of a linear hyperbolic automorphism with real eigenvalues, is $C^{1+\alpha}$ conjugated with the linear one. \\
\indent For partially hyperbolic diffeomorphisms, su-integrable may imply the same periodic data in the center direction. In 2020 Gan and Shi \cite{Ganshi} studied a $C^{1+\alpha}$ partially hyperbolic diffeomorphism
$f$ which is homotopic to an Anosov automorphism $A$ on the 3-torus. They proved that if $f$ is su-integrable and topologically conjugate to $A$, then $f$ and $A$ have the same periodic data in the center direction, and the topological conjugacy between them is differentiable along the center foliation.\\
\indent For partially hyperbolic endomorphisms, similarly to su-integrability for partially hyperbolic diffeomorphisms, being "special"  may imply differentiability of the conjugacy in the stable direction .
Recent results about the smooth conjugacy confirm this principle, including Anosov endomorphisms \cite{AGGS, M, MR}, Anosov maps \cite{GS}, and derived from non-expanding Anosov endomorphisms \cite{GX}.
In such cases, $f$ and its linear part have the same stable periodic data, thus the conjugacy is smooth along the stable foliation.\\
\indent In the present paper, we investigate the problem of the smooth conjugacy for a $C^r$ specially partially hyperbolic endomorphism $f$, which is homotopic to an expanding linear endomorphism $A$ with irrational eigenvalues. This problem has not been previously studied by \cite{AGGS, M, MR, GS, GX}. Specifically, we can prove that if $f$ is area expanding and the center bundle is $C^1$, then the conjugacy is $C^{\max\{r-3,1\}+\alpha}$, i.e., Theorem \ref{differentiability}.
\begin{theorem}\label{differentiability}
Assume that $A$ is an expanding linear endomorphism on the 2-torus with irrational eigenvalues $|\lambda^u_A|>|\lambda^c_A|>1$, and $f$ is a $C^r(r\geq 3)$ specially partially hyperbolic endomorphism which is homotopic to $A$. If $f$ is area expanding, and $E^c$ is $C^1$, then
\begin{enumerate}
\item[(1)]
the semi-conjugacy $h$ between $f$ and $A$ is a $C^{1+\alpha}$ conjugacy for any $\alpha\in(0,1)$.
\item[(2)]
$h$ is $C^{r-3+\alpha}$, when $r\geq 5$; $h$ is $C^{\infty}$, when $r=\infty$.
\item[(3)]
$h$ is $C^{\omega}$, when $r=\omega$.
\end{enumerate}
\end{theorem}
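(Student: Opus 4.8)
The plan is to upgrade the topological conjugacy $h$ --- which already exists by Theorem~\ref{conjugacy}, since $f$ is area expanding --- to the asserted smoothness by the classical two-foliation scheme: first prove that $h$ is smooth along each of two transverse $f$-invariant foliations, then glue these partial regularities with a Journ\'e-type lemma. Throughout I would lift $f$, $A$, $h$ to $\tilde f$, $A$, $H$ on $\RR^2$, so that $H\tilde f=AH$ and $H-\mathrm{id}$ is bounded and $\ZZ^2$-periodic; since $\TT^2$ is compact all the expansion and bunching estimates are uniform, which is what lets one argue leaf by leaf on the cover even though the leaves in question are noncompact --- indeed dense, because the eigenvalues of $A$ are irrational, and this is exactly the feature that distinguishes the present case from the integer case of \cite{AnderssonRanter}. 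As $E^c$ is $C^1$ it is uniquely integrable to a $C^1$ foliation $\mathcal W^c$, and since $f$ is special $E^u$ is a genuine continuous line bundle on $\TT^2$ integrating to a foliation $\mathcal W^u$ with $C^{1+\alpha}$ leaves; using the uniform quasi-isometry of these foliations on $\RR^2$ together with the semiconjugacy one checks that $h$ carries $\mathcal W^c$ onto the linear foliation $\mathcal W^c_A$ (parallel to the weak eigendirection) and $\mathcal W^u$ onto $\mathcal W^u_A$. Finally, because $D\tilde f$ is $C^{r-1}$ and the bunching of $E^c\oplus E^u$ is controlled by the uniformly bounded ratio of the two expansion rates, a standard invariant-section argument bootstraps $\mathcal W^u$ and its holonomies, with a controlled loss of derivatives relative to $r$.

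For smoothness of $h$ along $\mathcal W^u$ I would use the special property in the same spirit as the Anosov-endomorphism results \cite{AGGS, GS, GX}. On each unstable leaf $\tilde f$ acts as a uniformly expanding $C^r$ map onto another leaf while $A$ acts as multiplication by $\lambda^u_A$, so $h$ restricts to a conjugacy of one-dimensional expanding maps; and because $f$ is special, $x\mapsto\log\|Df(x)|_{E^u_x}\|$ is a well-defined function on $\TT^2$ (it does not depend on a choice of backward orbit), which allows one to prove that its Birkhoff sums over periodic orbits coincide with those of the constant $\log|\lambda^u_A|$ --- i.e.\ $f$ and $A$ have the same unstable periodic data. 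Liv\v sic regularity together with a non-stationary Sternberg linearization along the $\mathcal W^u$-leaves then forces $h$ to be uniformly $C^{\max\{r-2,1\}+\alpha}$ along $\mathcal W^u$.

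For smoothness of $h$ along $\mathcal W^c$ --- the crux, and the place where area expansion is indispensable --- the key observation is that at each period-$n$ point $p$ one has $\det Df^n(p)=(f^n)'|_{E^c_p}\cdot(f^n)'|_{E^u_p}$ because $Df^n(p)$ preserves both lines $E^c_p,E^u_p$; combining this with the matching of the unstable periodic data established above and with the area-expanding inequality $(\ref{*})$ one pins down the center periodic data of $f$ as that of $A$. (Without area expansion the center could a priori expand at the wrong rate, so the hypothesis cannot be dropped.) A Liv\v sic-plus-linearization argument along the $C^1$ center leaves then gives that $h$ is uniformly $C^{\max\{r-3,1\}+\alpha}$ along $\mathcal W^c$, one derivative worse than along $\mathcal W^u$ because $E^c$ is only $C^1$ and is the less-bunched bundle. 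Journ\'e's lemma now glues the two leafwise regularities to give $h\in C^{\max\{r-3,1\}+\alpha}$ globally --- that is $C^{1+\alpha}$ for $r=3,4$ and $C^{r-3+\alpha}$ for $r\ge5$, and $C^\infty$ when $r=\infty$ with no loss --- and one also checks along the way that $Dh$ is everywhere nonsingular, so $h$ is a genuine diffeomorphism-type conjugacy; this proves (1) and (2). For (3) I would complexify: extend $\tilde f$ and $A$ to a complex neighborhood of $\RR^2$ in $\mathbb C^2$, realize the restriction of $H$ to a complexified leaf as the unique fixed point of a graph-transform contraction on a Banach space of bounded holomorphic maps of a complex strip --- uniform hyperbolicity makes the operator a contraction --- and conclude that this fixed point, which agrees with $h$ on the real leaf, is holomorphic; hence $h$ is $C^\omega$ along both foliations and the analytic version of the gluing step yields $h\in C^\omega$.

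The main obstacle is precisely the center direction: establishing that the center periodic data match and that $h$ is smooth along $\mathcal W^c$ when $E^c$ is only $C^1$ and is the weakly expanding, poorly bunched bundle, so that neither the relevant cohomological equation nor the regularity of the bundle is automatic. This is the step that forces genuine use of area expansion together with the structure already extracted on the unstable side, and it is also where the derivative bookkeeping must be done carefully in order to land at exactly $\max\{r-3,1\}+\alpha$ rather than something weaker.
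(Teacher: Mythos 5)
Your overall scaffold (smoothness along two invariant foliations, matching periodic data, then Journ\'e) is the same as the paper's, but two of your key steps do not work as stated. First, your mechanism for pinning down the center periodic data is invalid: from $\det Df^n(p)=\pm\|Df^n|_{E^c_p}\|\cdot\|Df^n|_{E^u_p}\|$, the (assumed) identity $\lambda^u(p)=\lambda^u_A$, and the area-expansion inequality (\ref{*}) you only get a \emph{lower} bound $\lambda^c(p)\geq \rho/\lambda^u_A$, since (\ref{*}) is an inequality and nothing forces $|\det Df^n(p)|=|\det A|^n$; this cannot yield $\lambda^c(p)=\lambda^c_A$. In the paper, area expansion is used quite differently: via the $C^2$ section theorem it gives $E^u\in C^2$ (Proposition \ref{small neighborhood}), hence uniformly $C^1$ unstable holonomy; then a global argument exploiting the irrational slope (density of translated $A$-unstable segments, Proposition \ref{period equal}) shows all center periodic exponents coincide, and Livschitz plus an adapted leafwise metric compared through the semiconjugacy (quasi-isometry, $\|\tilde h-\mathrm{id}\|<\infty$) identifies the common value with $\lambda^c_A$ (Proposition \ref{equal}). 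Your parallel claim that the unstable periodic data match ``because $f$ is special'' is likewise only an assertion; here there is no stable foliation to run the Anosov-endomorphism argument, and the paper proves it by the symmetric version of the same density/holonomy argument, using that the center holonomy is uniformly $C^1$ because $E^c$ is $C^1$.

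Second, the regularity bookkeeping has a gap that Livšic-plus-Sternberg cannot close: with $E^c$ only $C^1$ the center leaves are a priori only $C^2$, and both the leafwise regularity of $h$ and the hypothesis of Journ\'e's theorem are capped by the regularity of the leaves, so your scheme as written cannot give more than $C^{2-\epsilon}$ globally. The paper's essential extra step is to upgrade the center leaves to $C^{r-2}$: it shows the unstable foliation induces on a transversal circle a Poincar\'e map conjugate to the rotation by the quadratic irrational $\theta$ (Diophantine by Liouville), bootstraps the conjugating map to $C^r$ via Gogolev's one-dimensional lemma (Lemma \ref{c1-to-cr}), and then solves a cohomological equation over $R_\theta$ with a $C^{r-1}$ coboundary to conclude the leaves are $C^{r-2}$; only then do the dynamical-density argument (Proposition \ref{c-smooth-2}) and Corollary \ref{Jc} give $h\in C^{r-2-\epsilon}$, i.e.\ $C^{r-3+\alpha}$. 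This is also where the stated loss of three derivatives actually comes from, not from bunching. Finally, for $r=\omega$ your complexified graph-transform sketch presupposes analytic leaves; the paper must first establish leaf analyticity via the non-resonance Lemma \ref{spectral condition} and the parameterization method of Cabr\'e--Fontich--de la Llave, and then proves analyticity of $h$ along leaves by a uniform analytic continuation of the derivative cocycle products (Theorem \ref{conjugacy analytic}) before applying the analytic Journ\'e lemma.
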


\begin{remark}
When the semi-conjugacy $h$ between $f$ and $A$ is a $C^2$ conjugacy, the invariant center bundle $E^c$ of $f$ can be seen as the image of the invariant center bundle of $A$ under $Dh^{-1}$, so $E^c$
is $C^1$. So the added condition that the invariant center bundle $E^c$ is $C^1$ is reasonable.
\end{remark}

We conclude by stating the main theorem of the present paper as follows, which contains both Theorem \ref{conjugacy} and Theorem \ref{differentiability}.
\begin{theorem}[Main Theorem]\label{main}
Assume that $A$ is an expanding linear endomorphism on the 2-torus with irrational eigenvalues $|\lambda^u_A|>|\lambda^c_A|>1$, $f$ is a $C^r(r\geq 3)$ specially partially hyperbolic endomorphism which is homotopic to $A$.
\begin{enumerate}
\item[(1)]
the semi-conjugacy $h$ between $f$ and $A$ is a topological conjugacy
if and only if $f$ is area expanding.
\item[(2)]
If $f$ is area expanding and $E^c$ is $C^1$, then
$h$ is a $C^{1+\alpha}$ conjugacy for any $\alpha\in(0,1)$.
\item[(3)]
If $f$ is area expanding and $E^c$ is $C^1$, then
$h$ is $C^{r-3+\alpha}$, when $r\geq 5$; $h$ is $C^{\infty}$, when $r=\infty$.
\item[(4)]
If $f$ is area expanding and $E^c$ is $C^1$, then
$h$ is $C^{\omega}$, when $r=\omega$.
\end{enumerate}
\end{theorem}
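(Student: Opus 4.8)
The plan is to first build the semi-conjugacy and understand its fibres, then settle injectivity (item~(1)), and finally bootstrap regularity (items~(2)--(4)). Since $A$ is expanding, a standard shadowing argument on $\RR^2$ gives a unique continuous $\tilde h$ with $\tilde h\tilde f=A\tilde h$ and $\sup|\tilde h-\mathrm{id}|<\infty$, descending to a surjective $h\colon\TT^2\to\TT^2$ with $hf=Ah$. As $f$ is specially partially hyperbolic, $E^u$ is a genuine continuous line field on $\TT^2$ and integrates to an $f$-invariant foliation $\mathcal F^u$ (cf.\ \cite{HallHammerlindl,AnderssonRanter}) mapped by $h$ onto the linear foliation $\mathcal F^u_A$; likewise $E^c$ integrates to a topological foliation $\mathcal F^c$ mapped onto $\mathcal F^c_A$. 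First I would show $h$ is injective on each unstable leaf: if $h(p)=h(q)$ with $p\ne q$ on a common unstable leaf, then the intrinsic unstable distance between $f^np$ and $f^nq$ grows exponentially, but, since unstable leaves lift to uniform quasi-lines of direction $E^u_A$ while $\tilde h-\mathrm{id}$ is bounded, $f^np$ and $f^nq$ would have to stay a bounded distance apart --- impossible. Hence every fibre $h^{-1}(y)$ meets each unstable leaf at most once; being a continuum of uniformly bounded diameter, it is a (possibly degenerate) compact arc of $\mathcal F^c$, and $h$ is a homeomorphism iff all fibres are points. Since $f$ carries center arcs to center arcs, the union $\mathcal C$ of the non-degenerate fibres is forward $f$-invariant.

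For the necessity in item~(1), suppose $f$ is not area-expanding. Using $\min_x\frac1n\sum_{i<n}\log|\det Df|(f^ix)\to\inf_\mu\int\log|\det Df|\,d\mu$ over invariant $\mu$, area-expansion is equivalent to this infimum being positive, so there is an ergodic $\mu$ with $\int\log|\det Df|\,d\mu\le0$; since $E^u\subset\mathscr C$ forces $\lambda^u(\mu)\ge\log\mu_0>0$ with $\mu_0:=\min\{\,|Df(x)v|:x\in\TT^2,\ v\in\mathscr C_x,\ |v|=1\,\}$, the center exponent is $\lambda^c(\mu)\le-\log\mu_0<0$ and $E^c$ is forward-contracted $\mu$-a.e. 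Pesin theory then gives, $\mu$-a.e.\ $x$, a non-trivial center arc $W\ni x$ on which $f^n$ contracts uniformly; for $y\in W\setminus\{x\}$ the lifts of $f^nx,f^ny$ converge, so $\tilde h(\widetilde{f^nx})-\tilde h(\widetilde{f^ny})\to0$, yet this equals $\tilde A^n(\tilde h\tilde x)-\tilde A^n(\tilde h\tilde y)$, which diverges along a line of direction $E^c_A$ unless $\tilde h\tilde x=\tilde h\tilde y$. Thus $h$ is not injective. For the sufficiency, assume $f$ area-expanding and, for contradiction, $\mathcal C\ne\emptyset$, so $\mathcal C$ contains a non-degenerate center arc $I$. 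I would then run a thin-box estimate: with $R$ a thin box over $I$ foliated by short unstable arcs, for every $n$ the set $f^n(I)$ lies in the single fibre $h^{-1}(A^nh(y))$, whose center-length is bounded independently of $n$, while $f^n(R)$ sits in a uniformly bounded-width neighbourhood of a long unstable arc; comparing $\int_R|\det Df^n|\ge C\rho^n|R|$ with the upper bound from the small image area and the bounded self-overlap multiplicity of $f^n$ on such a box, and optimizing the dimensions of $R$, should force $\int_I|Df^n|_{E^c}|$ to grow, contradicting the length bound. Hence $\mathcal C=\emptyset$, $h$ is a homeomorphism, and $f=h^{-1}Ah$.

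For items~(2)--(4) assume $f$ area-expanding, $E^c\in C^1$, and $h$ a topological conjugacy. First I would prove that the unstable and center periodic data of $f$ equal those of $A$ --- at $p=f^Np$ the multipliers of $Df^N(p)$ along $E^u$ and $E^c$ are $(\lambda^u_A)^N$ and $(\lambda^c_A)^N$ --- using specialness for the unstable direction as in the Anosov-endomorphism rigidity results \cite{AGGS,M,MR,GS,GX}, and, for the center direction, the $C^1$-regularity of $E^c$ together with an ergodic-theoretic rigidity argument comparing the center Lyapunov functional of $f$ with $\log|\lambda^c_A|$ via the natural invariant measures, which uses that $\lambda^c_A$ is irrational. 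With matching data, de~la~Llave--Moriy\'on--type cohomological arguments make $h$ restricted to $\mathcal F^u$- and to $\mathcal F^c$-leaves $C^{1+\alpha}$ with uniformly H\"older transverse dependence, and Journ\'e's lemma upgrades this to $h\in C^{1+\alpha}$, giving~(2); the differentiability of $h$ along $\mathcal F^c$ here follows the line of \cite{Ganshi}. Once $h\in C^{1+\alpha}$, the bundles of $f$ are $Dh^{-1}$-images of those of $A$, and one bootstraps the leafwise regularity of $h$ from $hf=Ah$ by H\"older/elliptic regularity, losing a fixed number of derivatives (three in this setting) and reapplying Journ\'e, to reach $h\in C^{r-3+\alpha}$ for $r\ge5$ and $h\in C^\infty$ for $r=\infty$, giving~(3). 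Finally, for $r=\omega$ one complexifies $f$ to a holomorphic map on a neighbourhood of $\TT^2$ in $\mathbb C^2/\ZZ^2$ and reruns the fixed-point and regularity argument with uniform estimates on a strip of holomorphy, yielding $h\in C^\omega$, giving~(4).

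I expect two main obstacles. The first is the sufficiency in item~(1): showing uniform area-expansion is incompatible with any non-trivial center collapse is a global statement (note that $\lambda^u+\lambda^c\ge\log\rho$ by itself does not forbid $\lambda^c<0$), so the local Lyapunov argument used for necessity is not enough, and the box estimate must be carried out with genuine care about how far $f^n(R)$ wraps around the torus. The second is the periodic-data matching in items~(2)--(4): proving the derivative cocycle of $f$ is cohomologous to $A$ is where ``area-expanding'', ``special'', ``$E^c\in C^1$'' and ``$\lambda_A$ irrational'' must all be used in an essential, combined way; once that is in place, the remaining regularity --- including the real-analytic case --- is the by-now standard Journ\'e-plus-bootstrap scheme.
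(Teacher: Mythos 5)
Your necessity direction for item (1) (an invariant measure with nonpositive Jacobian exponent, hence a negative center exponent, Pesin stable arcs tangent to $E^c$, and the lifted contradiction with $A$ expanding) is a legitimate alternative to the paper's more elementary pointwise argument. The real gap is the sufficiency half of item (1). Your thin-box estimate needs a uniform bound on the self-overlap multiplicity $\#\{z\in R:\ f^nz=w\}$, but $f^n$ has degree $|\det A|^n$, and on the universal cover $\tilde f^n(\tilde R)$ is a strip-like set: it has uniformly bounded width (unstable leaves stay at bounded distance from lines of irrational slope, and the collapsed arc $f^n(I)$ has bounded length), while its length grows like $\rho^n$. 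The number of integer translates of such a strip that can meet it grows linearly in its length, so the multiplicity can a priori be of order $\rho^n$, and the area comparison then only gives $\mathrm{area}_{\TT^2}(f^n R)\gtrsim \delta|I|$ — no contradiction; indeed, all the unconditional bookkeeping yields is $\max_{x\in I}\|Df^n|_{E^u(x)}\|\gtrsim\rho^n$, which carries no information. Controlling the overlaps is essentially the assertion that $\mathscr{F}^u$ does not keep returning to the same thin transversal interval, i.e.\ minimality (absence of Denjoy-type wandering intervals for its return map), and that is precisely where the paper uses area expansion in a completely different way: area expansion plus the uniform angle bound gives $\inf_x\|Df|_{E^c(x)}\|\cdot\|Df|_{E^u(x)}\|>1$ after iteration, hence $E^u$ is $C^2$ by the $C^2$ section theorem, hence the Poincar\'e return map of $\mathscr{F}^u$ on a transversal circle is a $C^2$ circle diffeomorphism with irrational rotation number; Denjoy's theorem gives minimality, and since the set of non-collapsed points and its complement are both saturated by unstable leaves, $h$ is injective. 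You flagged the wrap-around problem yourself, but you supply no mechanism to overcome it, and without one the argument does not close.

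For items (2)--(4) your outline asserts the conclusions rather than providing the mechanisms. Equality of the center periodic data is not a soft consequence of ``$E^c\in C^1$ plus irrationality of $\lambda^c_A$'': the paper proves it by a quantitative argument in which translates of unstable leaves by $A^k\ZZ^2$ become dense at a controlled rate and the uniformly $C^1$ unstable holonomy transports center lengths, after which Livschitz and an adapted leafwise metric make $h$ differentiable along $\mathscr{F}^c$ and $\mathscr{F}^u$ before Journ\'e is applied. More importantly, the exponent $r-3$ has a specific source you never identify: the center leaves are only shown to be $C^{r-2}$, and this comes from solving a cohomological equation of the form $\tilde\phi\circ R_\theta-\tilde\phi=-\tilde B$ over the irrational rotation induced on a transversal circle, where the Liouville-type Diophantine bound for the quadratic irrational $\theta$ costs derivatives; ``H\"older/elliptic regularity losing three derivatives'' is not a mechanism available in this problem, since the relation $hf=Ah$ by itself loses nothing and there is no elliptic operator in sight. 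Similarly, in the analytic case one first needs the non-resonance of the eigenvalues of $A$ (neither eigenvalue is an integer power of the other, the paper's Lemma 3.12) to apply the parameterization method and obtain analytic leaves, then uniform bounds for holomorphic extensions of the normalized derivative products along leaves, and finally an analytic version of Journ\'e's lemma; ``complexify $f$ and rerun the fixed-point argument'' passes over each of these steps.
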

The proof of item (1) is given in section 3.1, the proofs of item (2)-(3) are given in section 3.2 and the proof of item (4) is given in section 3.3.\\
\noindent\textbf{Acknowledgement:}
The author of this article is grateful for the valuable communication and suggestions from Shaobo Gan. This work is partially supported by National Key R\&D Program of China 2022YFA1005801 and NSFC 12161141002.
\section{Preliminary}
For a specially partially hyperbolic endomorphism $f$ on $\mathbb{T}^2$, there exists the decomposition $T\mathbb{T}^2=E^c\oplus E^u$, which satisfies for all $x\in\TT^2$,
$$Df(x)E^c_x=E^c_{fx},\quad Df(x)E^u_x=E^u_{fx},\quad ||Df|_{E^c_x}||<||Df|_{E^u_x}||,\quad ||Df|_{E^u_x}||>1.$$
The unstable bundle $E^u$ of $f$ is locally uniquely integrable, and the unstable manifolds of $f$ form an $f-$invariant foliation $\mathscr{F}^u$. \\
\indent Now further assume that $f$ is $C^r(r\geq 3)$ and homotopic to an expanding linear endomorphism $A$ with the irrational eigenvalues $|\lambda^u_A|>|\lambda^c_A|>1$.

\begin{lemma}\label{no periodic annulus}
Under the assumption of Theorem \ref{main}, there is no periodic annulus of $f$ on $\mathbb{T}^2$.
\end{lemma}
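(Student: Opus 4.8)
Since any precise formulation of a \emph{periodic annulus} (an $f$-periodic essential embedded open annulus on $\TT^2$, possibly also required to be a union of leaves of $\mathscr{F}^u$) leads to the same argument, I would argue by contradiction and reduce the statement to an elementary homological obstruction that is available precisely because the eigenvalues of $A$ are irrational; in particular the proof does not, and by the blanket hypotheses should not, use the area-expansion condition \eqref{*}. So suppose $\mathcal{A}\subset\TT^2$ is a periodic annulus, say $f^k(\mathcal{A})=\mathcal{A}$ for some $k\geq 1$ (a cyclically permuted chain of annuli reduces to this after passing to a power). Consider first the case where a core curve $\gamma\subset\mathcal{A}$ is essential. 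Since $f^k(\gamma)\subset f^k(\mathcal{A})=\mathcal{A}$ and $\mathcal{A}\cong S^1\times(0,1)$, the loop $f^k(\gamma)$ is homotopic inside $\mathcal{A}$ to $\gamma^{\,d}$ for a unique $d\in\ZZ$, so $[f^k(\gamma)]=d\,[\gamma]$ in $H_1(\TT^2;\ZZ)\cong\ZZ^2$. On the other hand $f$ is homotopic to $A$, hence $f^k$ is homotopic to $A^k$ and $[f^k(\gamma)]=(f^k)_*[\gamma]=A^k[\gamma]$. Therefore $A^k[\gamma]=d\,[\gamma]$ with $[\gamma]\neq 0$ (the core is essential), which forces $d\neq 0$ (as $A^k$ is invertible): thus $d$ is an integer eigenvalue of the integer matrix $A^k$.

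But then the other eigenvalue of $A^k$ equals $\operatorname{tr}(A^k)-d\in\ZZ$, so \emph{both} eigenvalues of $A^k$ are integers; they are $(\lambda^u_A)^k$ and $(\lambda^c_A)^k$, and this is impossible. Indeed $\lambda^u_A,\lambda^c_A$ are the roots of $t^2-(\operatorname{tr}A)\,t+\det A\in\ZZ[t]$, which is irreducible over $\QQ$ exactly because its roots are irrational, so $\lambda^u_A$ and $\lambda^c_A$ are Galois conjugate over $\QQ$; applying the nontrivial automorphism of $\QQ(\lambda^u_A)$ to $(\lambda^u_A)^k=d\in\ZZ$ gives $(\lambda^c_A)^k=d=(\lambda^u_A)^k$, contradicting $|\lambda^u_A|>|\lambda^c_A|$. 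This settles the essential case. I would stress that this is the only point where the irrationality of the eigenvalues enters, and it is exactly the mechanism that is unavailable for integer eigenvalues — e.g.\ a diagonal $A$ fixes the two rational coordinate axes, so periodic annuli can and do occur — which is the structural reason why \cite{AnderssonRanter} must assume transitivity while here the conclusion is unconditional.

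It remains to exclude a ``periodic annulus'' with null-homotopic core, i.e.\ one contained in an embedded disk. Recalling that $f$ preserves $\mathscr{F}^u$ (so images of leaves are leaves, and images of compact leaves are compact leaves), I would reduce this to showing $\mathscr{F}^u$ has no closed leaf at all. A contractible closed unstable leaf bounds a disk $D$, so $\mathscr{F}^u|_{\overline{D}}$ is a nonsingular foliation of a closed $2$-disk with boundary a leaf; doubling $\overline{D}$ along $\partial D$ produces a nonsingular foliation of $S^2$, impossible since $\chi(S^2)\neq 0$. An essential closed unstable leaf $L$, together with all its $f$-images, forms a family of pairwise disjoint essential simple closed curves, hence all of the same primitive class $v\in H_1(\TT^2;\ZZ)$ up to sign; since $f$ maps $L$ onto $f(L)$ with some degree $d\geq 1$ and $[f(L)]=\pm v$, one gets $Av=\pm d\,v$, an integer eigenvalue of $A$, excluded as above. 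The non-saturated case reduces to this by inspecting the frontier leaves of $\overline{\mathcal{A}}$, or is handled directly by a Poincar\'e--Bendixson type argument on the disk. I expect the real work to be exactly this bookkeeping — pinning down the operative notion of a periodic annulus for the non-invertible $f$ and closing off the contractible sub-case — since the essential case, which is the heart of the matter, comes essentially for free from the irrationality hypothesis.
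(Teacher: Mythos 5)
Your essential-case argument is exactly the paper's proof: take a core loop $\gamma$ of the annulus, use $f^k$-invariance together with the homotopy between $f$ and $A$ to get $A^k[\gamma]=d[\gamma]$ in $H_1(\TT^2;\ZZ)$, and rule out an integer eigenvalue (equivalently, an integer eigenvector) of $A^k$ via the irrationality of $\lambda^u_A,\lambda^c_A$ — your Galois-conjugation step is just a more detailed justification of the paper's one-line contradiction. The extra discussion of a null-homotopic annulus is the only sketchy part of your write-up, but it is also unnecessary: the periodic (centre) annuli relevant here, as in the cited Hall--Hammerlindl classification, are essential by definition, which is why the paper's proof (like your main case) treats only that situation.
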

\begin{proof}
Suppose that there is a periodic annulus $\mathbb{A}$, i.e., $f^n\mathbb{A}=\mathbb{A}, n\in\NN$. Choose a loop $\gamma$ in $\mathbb{A}$ such that $[\gamma]$ represents the unit in $\pi_1(\mathbb{A})=\ZZ$. Since $f^n\gamma\subset f^n\mathbb{A}=\mathbb{A}$, there is an integer $m$ such that $[f^n\gamma]=m[\gamma]$. Notice that $[f^n\gamma]=A^n[\gamma]$ in $\pi_1(\mathbb{T}^2)=\ZZ^2$, we have that $[\gamma]$ is an eigenvector of $A$ in $\ZZ^2$. It contradicts the assumption that the eigenvalues of $A$ are all irrational.
\end{proof}

Applying Lemma \ref{no periodic annulus}, we can know that $f$ is dynamical coherent, and the lifts of center foliation and unstable foliation are quasi-isometric.

\begin{theorem}
\cite[Theorem B]{coherence}
Let $f:\TT^2\to \TT^2$ be partially hyperbolic. If $f$ does not admit a periodic centre annulus, then $f$ is dynamically coherent and leaf conjugate to its linearisation.
\end{theorem}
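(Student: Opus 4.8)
The statement is by now a standard fact whose proof follows the branching-foliation approach of Burago--Ivanov and Hammerlindl--Potrie, adapted to endomorphisms of a surface; here is how I would organise it. The plan is to work on the universal cover $\RR^2$: lift $f$ to $\tilde f$ and the linearisation to the linear map $\tilde A$, with invariant eigenlines $E^u_A$ (the expanding direction) and $E^c_A$ (the remaining, central, direction). Because $f$ is homotopic to $A$, the displacement $\tilde f-\tilde A$ is a bounded $\ZZ^2$-periodic map, and this single fact is what will force every $\tilde f$-invariant object to lie at bounded Hausdorff distance from the corresponding linear one. The one-dimensional unstable bundle $E^u$ is uniquely integrable and already gives the $f$-invariant foliation $\mathscr{F}^u$; for the center I would first claim only the weaker object, namely an $f$-invariant \emph{branching} foliation $\mathscr{W}^c$ tangent to $E^c$ (complete $C^1$ curves through every point, pairwise crossing only tangentially), which always exists.

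The core of the argument is to upgrade this to honest geometry using the hypothesis. First, $\widetilde{\mathscr{F}}^u$ has quasi-isometrically embedded leaves staying within a bounded neighbourhood of the $E^u_A$-lines: this is the standard consequence of uniform expansion along $E^u$ together with bounded displacement. Next, the absence of a periodic center annulus forces (i) every leaf of $\widetilde{\mathscr{W}}^c$ to be quasi-isometric and to track an $E^c_A$-line, and (ii) $\mathscr{W}^c$ to have no genuine branching, hence to be a true foliation $\mathscr{F}^c$. The mechanism in both cases is the same: a non-quasi-isometric center leaf, or a true branch point, would, together with the transverse unstable foliation, bound a strip in $\RR^2$ on which $\tilde f$ acts almost-invariantly; controlling the forward iterates of this strip via bounded displacement and the already-known quasi-isometry of $\widetilde{\mathscr{F}}^u$ pins down where leaves must go, and one extracts from the resulting periodic configuration a genuine periodic annulus on $\TT^2$, a contradiction. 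With $\mathscr{F}^c$ and $\mathscr{F}^u$ now honest, transverse, and quasi-isometric, the induced bi-foliation of $\RR^2$ has global product structure, the two leaf spaces are each homeomorphic to $\RR$, and $\tilde f$ acts on the product of leaf spaces exactly as $\tilde A$ does (bounded displacement plus quasi-isometry leaves no alternative). On $\TT^2$ the center-unstable "foliation" is the whole manifold, so this already yields dynamical coherence.

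For the leaf conjugacy, define $H:\RR^2\to\RR^2$ by sending $\widetilde{\mathscr{F}}^c(\tilde x)\cap\widetilde{\mathscr{F}}^u(\tilde x)$ to the intersection of the two linear leaves matched to $\widetilde{\mathscr{F}}^c(\tilde x)$ and $\widetilde{\mathscr{F}}^u(\tilde x)$ under the canonical identifications of leaf spaces with $\RR$. Quasi-isometry makes $H$ a well-defined homeomorphism at bounded distance from the identity; $\ZZ^2$-equivariance lets it descend to a homeomorphism $g$ of $\TT^2$ carrying $\mathscr{F}^{c}_f,\mathscr{F}^{u}_f$ to $\mathscr{F}^{c}_A,\mathscr{F}^{u}_A$; and since $\tilde f$ and $\tilde A$ act identically on leaf spaces, $g$ intertwines the dynamics at the level of leaves, i.e. it is a leaf conjugacy. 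Throughout, one must remember that $f$ is non-invertible: backward orbits are not unique, so all invariance is read forward and the leaf-space dynamics of $\tilde f$ is a surjective expanding model rather than a bijection; this does not obstruct leaf conjugacy but has to be carried along.

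I expect the main obstacle to be the middle step: translating the purely topological ``no periodic center annulus'' into the metric statements (quasi-isometry of center leaves and absence of branching). Building, from a hypothetical bad leaf or branch point, a bounded $\tilde f$-almost-invariant strip, controlling its iterates, and extracting a genuine periodic annulus downstairs --- while bookkeeping which leaf maps to which for a non-invertible map --- is the delicate part; everything before and after is comparatively routine given the standard toolbox.
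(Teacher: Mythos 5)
This statement is not proved in the paper at all: it is quoted verbatim as Theorem B of Hall--Hammerlindl \cite{coherence}, so there is no in-paper argument to compare yours against; the only meaningful comparison is with the cited proof, and measured against that your outline is a roadmap rather than a proof. Two concrete issues. First, you take as given an $f$-invariant unstable foliation $\mathscr{F}^u$ on $\TT^2$. For a general (non-special) partially hyperbolic endomorphism --- and the theorem is stated for those --- the unstable direction depends on the choice of backward orbit and does not exist as a bundle on $\TT^2$; it only becomes well defined after lifting to $\RR^2$, where the lift $\tilde f$ is a diffeomorphism. This is precisely why the present paper restricts to \emph{special} endomorphisms whenever it wants $E^u$ and $\mathscr{F}^u$ downstairs. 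Your argument can be repaired by running the unstable part of the construction entirely on the universal cover (or on the inverse limit), but as written the starting object is not available.

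Second, the heart of the theorem --- converting the purely topological hypothesis ``no periodic centre annulus'' into (i) absence of genuine branching of the invariant center object and (ii) quasi-isometry of its lifted leaves and their tracking of a linear direction --- is exactly the step you describe only as a ``mechanism'' and then flag as the main obstacle. Building, from a hypothetical branch point or badly behaved leaf, an almost-invariant strip, controlling its iterates under a non-invertible map, and extracting a genuine periodic annulus on $\TT^2$ is the actual content of Hall--Hammerlindl's argument and occupies most of their paper; your sketch asserts the outcome without supplying it. The surrounding steps (existence of an invariant branching family of center curves, global product structure once both foliations are honest and quasi-isometric, and the construction of the leaf conjugacy by matching leaf spaces, all at bounded distance from the identity and $\ZZ^2$-equivariant) are standard and consistent with the known proof, but with the middle step missing the proposal does not establish the theorem.
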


\begin{corollary}
Under the assumption of Theorem \ref{main}, $f$ is dynamical coherent.
\end{corollary}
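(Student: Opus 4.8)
The plan is to obtain the corollary as a direct consequence of Lemma \ref{no periodic annulus} together with the quoted \cite[Theorem B]{coherence}, so the argument is essentially a matter of matching hypotheses. First I would recall what dynamical coherence means in this two-dimensional setting: since $T\TT^2=E^c\oplus E^u$, the only nontrivial integrability statement is that $E^c$ is uniquely integrable to an $f$-invariant foliation $\mathscr{F}^c$ (the $cu$-bundle being all of $T\TT^2$, which is trivially tangent to the foliation by the single leaf $\TT^2$), together with the already-noted local unique integrability of $E^u$ to the invariant foliation $\mathscr{F}^u$. So the corollary reduces to producing the center foliation $\mathscr{F}^c$.

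Next I would check that the hypothesis of \cite[Theorem B]{coherence} is met. That theorem requires that $f$ admit no periodic centre annulus, i.e., no embedded annulus $\AA\subset\TT^2$ saturated by centre leaves with $f^n\AA=\AA$ for some $n\in\NN$. Any such object is in particular a periodic annulus in the sense of Lemma \ref{no periodic annulus}, so the conclusion of that lemma — that $f$ has no periodic annulus at all on $\TT^2$ — excludes periodic centre annuli a fortiori. Here I would also make explicit that $f$ is partially hyperbolic (indeed specially so, with the well-defined splitting $T\TT^2=E^c\oplus E^u$ from the Preliminary), which is the standing hypothesis of Theorem B; if Theorem B is phrased for diffeomorphisms one should remark that the relevant part of its proof is carried out for the lifted dynamics on the universal cover and hence applies verbatim to local diffeomorphisms of $\TT^2$, or invoke the ``special'' hypothesis that makes $E^u$ independent of the backward orbit so that the surface-dynamics arguments go through unchanged. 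Applying the theorem then yields that $f$ is dynamically coherent, and as a bonus that $f$ is leaf conjugate to $A$, which is the precise statement of the corollary.

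Since the substantive work has already been done — the irrationality of the eigenvalues of $A$ is exploited in the proof of Lemma \ref{no periodic annulus} via the induced action on $\pi_1(\TT^2)=\ZZ^2$, and the construction of $\mathscr{F}^c$ is supplied by \cite{coherence} — I do not expect a genuine obstacle. The only point that deserves a sentence of care is the bookkeeping identified above: confirming that ``no periodic annulus'' is at least as strong as ``no periodic centre annulus,'' and that the cited theorem is applicable in the (special) endomorphism setting considered here rather than only for diffeomorphisms.
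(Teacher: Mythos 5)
Your proposal is correct and follows exactly the paper's route: Lemma \ref{no periodic annulus} rules out all periodic annuli, hence in particular periodic centre annuli, and then \cite[Theorem B]{coherence} gives dynamical coherence. Your cautionary remark about diffeomorphisms versus endomorphisms is unnecessary, since the cited theorem of Hall--Hammerlindl is stated precisely for partially hyperbolic surface endomorphisms, so no adaptation is needed.
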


\begin{lemma}\label{quasi-iso}
\cite{Heshiwang}
For any specially partially hyperbolic endomorphism $f$  on $\TT^2$ and any $f$-invariant foliation $\mathscr{F}$, if $\mathscr{F}$ contains a Reeb component on an annulus $\mathbb{A}\subset\TT^2$, then there exists
a positive integer $n$ such that $\mathbb{A}$ is completely $f$ $n$-invariant: $f^n(\mathbb{A})=\mathbb{A}, f^{-n}(\mathbb{A})=\mathbb{A}$.
\end{lemma}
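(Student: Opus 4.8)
The plan is to reduce the lemma to a statement about the action of $f$ on the compact leaves of $\mathscr F$, and then to control the orbit of $\mathbb A$ by a transverse expansion estimate.

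First I would extract the structure coming from the Reeb component. Its two boundary leaves $c_1,c_2$ are circle leaves of $\mathscr F$, and since any two circle leaves of a foliation of $\TT^2$ are disjoint essential circles, they are mutually isotopic; hence all circle leaves of $\mathscr F$ carry one common primitive class $v\in\ZZ^2$. Because $f$ is a local diffeomorphism (a covering map) homotopic to its linear part $A$: the $f$-image of a circle leaf is again a circle leaf — it is open (local diffeomorphism) and closed (compact) in the leaf that contains it, which is therefore compact — and the preimage of a circle leaf is a finite union of circle leaves; moreover on homology $[f(c_1)]=Av$ is primitive (it is an embedded circle) and parallel to $v$ (it is another circle leaf), so $Av=\pm v$. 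If $A$ has no eigenvalue of modulus one this is impossible, so $\mathscr F$ has no circle leaf and the lemma is vacuous; otherwise, replacing $f$ by an iterate I may take $Av=v$, and then in a $\ZZ$-basis $\{v,w\}$ one has $A=\bigl(\begin{smallmatrix}1&\ast\\0&b\end{smallmatrix}\bigr)$ with $b\in\ZZ$, while partial hyperbolicity forces $A$ to have an eigenvalue of modulus $>1$, so $|b|\ge 2$ (after one more iterate, $b\ge 2$).

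Next I would record that $f$ permutes the Reeb annuli. The union $\mathcal C$ of all circle leaves of $\mathscr F$ is a nonempty closed set with $f(\mathcal C)=f^{-1}(\mathcal C)=\mathcal C$; its complement is a disjoint union of open annuli, which $f$ permutes (it restricts to a self-covering of $\TT^2\setminus\mathcal C$) while preserving the property of carrying a Reeb component. So $f$ induces an onto self-map $\Phi$ of the set $\mathcal{RA}$ of maximal Reeb annuli, with $\mathbb A\in\mathcal{RA}$; distinct members of $\mathcal{RA}$ have disjoint interiors, because an interior leaf of a Reeb component limits onto exactly two circle leaves. Since the core of a Reeb annulus is isotopic to a circle leaf (class $v$, and $Av=v$), $f$ is $\pi_1$-injective on each Reeb annulus, hence — being the restriction of a covering to a component of a saturated preimage — a homeomorphism onto its $\Phi$-image.

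The main obstacle is the remaining finiteness/periodicity statement. Passing to the cyclic cover $S^1\times\RR$ that unwinds the $w$-direction (allowed since $A$ preserves $\ZZ v$), $f$ lifts to a map that multiplies the $\RR$-coordinate by $|b|$ on the large scale; the circle leaves lift to a $\ZZ$-ordered family of disjoint essential circles, and a Reeb annulus $R$ lifts to a strip of finite transverse width $\delta(R)<1$, with $\delta(\Phi R)\ge |b|\,\delta(R)-C$ for a constant $C=C(f)$. This already bounds all widths by $C/(|b|-1)$; to upgrade it to finiteness of the forward orbit of $\mathbb A$ I would track the transverse position of the bounding circle leaves, on which the lift of $f$ acts essentially as the expanding circle map $t\mapsto bt$ of $\RR/\ZZ$. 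Because distinct Reeb annuli occupy disjoint transverse slots, a $(\times b)$-orbit of such slots combined with the width estimate cannot remain infinite, so the forward orbit of $\mathbb A$ is preperiodic; then a bookkeeping of how the $\deg f$ preimages of a Reeb annulus are distributed along the periodic cycle (each mapped homeomorphically) forces $\mathbb A$ to lie on the cycle and yields the complete invariance. The delicate point, where I expect the real work to be and where the specific regularity of the invariant foliations of a specially partially hyperbolic endomorphism should enter, is making ``transverse position'' precise — the projection $\TT^2\to\TT^2/\langle v\rangle$ is not leafwise constant along a Reeb annulus — and coupling it rigorously to the width estimate.
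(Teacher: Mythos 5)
The paper itself offers no argument for this lemma: it is quoted verbatim from \cite{Heshiwang}, so there is no internal proof to compare with and your attempt has to stand on its own. It does not, for two reasons. The first is the opening reduction. From $[f(c_1)]=Av$ you conclude that $Av$ is primitive ``because $f(c_1)$ is an embedded circle'', hence $Av=\pm v$. This conflates the pushforward $f_*[c_1]$ with the class of the image set: $f|_{c_1}\colon c_1\to f(c_1)$ is in general a covering of some degree $d\ge 1$, so all one gets is $Av=\pm d\,v$, i.e.\ $v$ is an integer eigenvector of $A$ with a nonzero \emph{integer} eigenvalue (take $f=A=\mathrm{diag}(2,3)$ and the horizontal circle foliation: each circle leaf maps to a circle leaf with degree $2$ and $Av=2v$). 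Consequently your claim that the lemma is vacuous unless $A$ has an eigenvalue of modulus one is false; the case of integer eigenvalues of modulus $\ge 2$ is exactly the non-vacuous situation the lemma is designed for (it is only in the present paper's irrational-eigenvalue setting that circle leaves are impossible, which is the separate argument of Lemma 2.1). Your cyclic-cover argument therefore treats only the special normalization $Av=v$ with transverse eigenvalue $b$, $|b|\ge 2$; in general the transverse eigenvalue is $\det A/\lambda_1$ and need not have modulus larger than one, so the width-expansion mechanism you rely on is not even available in the form you use it, and likewise the claim that $f$ is a homeomorphism on each Reeb annulus (degree one on the core) fails when $d>1$.

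The second problem is that the heart of the proof is asserted rather than carried out. The estimate $\delta(\Phi R)\ge |b|\,\delta(R)-C$, the precise meaning of the ``transverse position'' of a Reeb annulus (leaves are not graphs over the $v$-direction, as you yourself note), the passage from a width bound to preperiodicity of the orbit of $\mathbb{A}$, and above all the upgrade from preperiodicity to the full conclusion $f^n(\mathbb{A})=\mathbb{A}=f^{-n}(\mathbb{A})$ (which requires showing that no \emph{other} Reeb annulus maps onto $\mathbb{A}$) are all left as ``bookkeeping'' or flagged by you as ``the delicate point where I expect the real work to be''. That is precisely where the partial hyperbolicity of $f$ has to enter, and without it the statement is not established. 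What is sound in your sketch is the preliminary skeleton: the image of a circle leaf is a circle leaf, the preimage of a circle leaf is a finite union of circle leaves (by finite-to-one-ness), hence the union of compact leaves is completely invariant and $f$ permutes the complementary annuli, mapping each onto its image as a finite covering. But the finiteness, periodicity and complete-invariance steps, which are the actual content of the lemma proved in \cite{Heshiwang}, remain open in your proposal.
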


\begin{corollary}
Under the assumption of Theorem \ref{main}, neither center foliation nor unstable foliation has reeb component, thus they are both suspensions of circle homeomorphisms. So they are orientable and the lifts of them are quasi-isometric.
\end{corollary}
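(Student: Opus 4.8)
The plan is to deduce everything from the absence of periodic annuli (Lemma \ref{no periodic annulus}), from Lemma \ref{quasi-iso}, and from the dynamical coherence already established, together with the classical structure theory of codimension-one foliations on $\TT^2$.

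First I would rule out Reeb components by contradiction. Suppose one of the two foliations — say the center foliation $\mathscr{F}^c$, the argument for the unstable foliation $\mathscr{F}^u$ being identical — contains a Reeb component supported on an annulus $\mathbb{A}\subset\TT^2$. Since $f$ is specially partially hyperbolic and, by dynamical coherence, $\mathscr{F}^c$ is an $f$-invariant foliation, Lemma \ref{quasi-iso} provides a positive integer $n$ with $f^n(\mathbb{A})=\mathbb{A}$. Then $\mathbb{A}$ is a periodic annulus of $f$, contradicting Lemma \ref{no periodic annulus}. Hence neither $\mathscr{F}^c$ nor $\mathscr{F}^u$ admits a Reeb component.

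Once both foliations are Reebless, I would invoke the classical description of codimension-one foliations of the $2$-torus: a Reebless foliation of $\TT^2$ is topologically conjugate to the suspension of a circle homeomorphism (this includes both the case with a closed leaf and that of an irrational linear foliation). A suspension foliation is leafwise and transversely orientable, so $\mathscr{F}^c$ and $\mathscr{F}^u$ are orientable. The one point requiring care is that $\mathscr{F}^c$ is a priori only a continuous foliation; that it is nonetheless a genuine one-dimensional topological foliation is exactly what dynamical coherence — i.e. \cite[Theorem B]{coherence} via the preceding Corollary — guarantees, so the classification does apply to it.

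For the quasi-isometry of the lifts I would use the standard fact that the leaves of the lift to $\RR^2$ of a Reebless (equivalently, suspension) foliation of $\TT^2$ stay within uniformly bounded Hausdorff distance of straight lines in a fixed asymptotic direction: a suspension has a well-defined slope, and each lifted leaf is a graph over that direction of uniformly bounded amplitude, hence is quasi-isometrically embedded with constants independent of the leaf. For $\mathscr{F}^c$ this is also compatible with the leaf conjugacy to the linearization $A$ furnished by \cite[Theorem B]{coherence}, whose leaves are literally straight lines. I expect the only genuinely non-bookkeeping point to be the passage from Reeblessness to an honest suspension structure at the level of regularity available (a merely continuous center foliation); the first and last steps are then immediate from Lemmas \ref{no periodic annulus} and \ref{quasi-iso} and from well-known properties of suspension foliations on the torus.
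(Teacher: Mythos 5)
Your route coincides with the paper's: the only step the paper actually argues is the exclusion of Reeb components, and you obtain it exactly as intended, by feeding a hypothetical Reeb annulus of the $f$-invariant foliation $\mathscr{F}^c$ or $\mathscr{F}^u$ into Lemma \ref{quasi-iso} to produce an $f^n$-invariant annulus, contradicting Lemma \ref{no periodic annulus}. The remaining assertions (a Reebless foliation of $\TT^2$ is a suspension of a circle homeomorphism, hence orientable, with quasi-isometric lift) are invoked by the paper as classical facts without proof, so up to that point your proposal is the same argument.

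The one genuine weak spot is your justification of quasi-isometry. What Lemma \ref{P} (or the suspension structure) gives is that each lifted leaf lies in an $R$-neighborhood of a translate of a fixed line, and the inference ``graph over that direction of uniformly bounded amplitude, hence quasi-isometrically embedded'' is not valid: a curve confined to a band of width $2R$ can oscillate arbitrarily fast, so its leafwise arclength between two points is not linearly controlled by their Euclidean distance (consider a graph $u(t)=R\sin(Nt)$ with $N$ large). Bounded Hausdorff distance from a line is strictly weaker than quasi-isometry, so an extra uniform control of arclength is needed. Two standard ways to supply it: choose a smooth closed transversal circle meeting every leaf (possible since $E^{\sigma}$ is a continuous line field on the compact torus and the foliation is a suspension); then either note that transversality on a compact manifold gives a uniform lower bound on the angle between leaves and the transversal fibration, so lifted leaves are uniformly Lipschitz graphs over the base direction, or note that the leafwise length of the segment of a leaf between consecutive crossings of the transversal is continuous in the starting point, hence uniformly bounded, while the lifted crossings advance monotonically by a fixed deck translation; either version produces the constants $a,b$ of quasi-isometry uniformly in the leaf. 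Alternatively one may simply quote this statement, which is standard in the literature on Reebless foliations of $\TT^2$ used in this context, rather than rederive it from the bounded-amplitude picture.
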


\indent Denote by $\tilde{f}$ a lift of $f$ on $\RR^2$.
Since $f$ is a nonsingular endomorphism, its lift $\tilde{f}$ is a diffeomorphism on $\RR^2$ \cite{ManePugh}. Since $f$ and $A$ are homotopic and $A$ is expanding, there is a unique $\tilde{h}:\RR^2\to \RR^2$ such that $\tilde{h}\tilde{f}=A\tilde{h}$ and $\tilde{h}(x+n)=\tilde{h}(x)+n$ for any $n\in\ZZ^2$ \cite[Theorem 2]{Shub}.
Then $\tilde{h}$ can descend to $h:\mathbb{T}^2\to \mathbb{T}^2$, and $h$ satisfies
$hf=Ah$.\\
\indent Denote by $\mathscr{F}^{\sigma}$ the $f$-invariant foliation tangent to $f$-invariant bundle $E^{\sigma}_f$, and $\tilde{\mathscr{F}}^{\sigma}$ the lift of $\mathscr{F}^{\sigma}$, $\sigma = c,u$.\\
\indent Denote by $\mathscr{A}^{\sigma}$ the $A$-invariant foliation tangent to $A$-invariant bundle $E^{\sigma}_A$, and $\tilde{\mathscr{A}}^{\sigma}$ the lift of $\mathscr{A}^{\sigma}$, $\sigma = c,u$.

\indent Before coming into the proof of the Main Theorem, we require some more preliminary information.
\begin{enumerate}
\item The foliation structure,
i.e., the semi-conjugacy $h$ maps the center foliation of $f$ to the center foliation of $A$ (Proposition \ref{center foliation}), the unstable foliation of $f$ to the unstable foliation of $A$ (Proposition  \ref{unstable foliation}), and the pre-image of a point on $\TT^2$ is a compact center arc (Proposition \ref{h^{-1}});
\item The smoothness of the unstable bundle, see Theorem \ref{bundle smoothness};
\item The global smoothness of the function which is smooth along foliations, see Theorem \ref{J} and Corollary \ref{Jc}.
\end{enumerate}

Define $d(S,L):=\sup\{d(x,L)|x\in S\}$ as the distance between the set $S$ and $L$, which is used in Proposition 2.5-2.7.

\begin{proposition}\label{bounded distance}
~
\begin{enumerate}
\item If an $\tilde{f}$-invariant foliation $\tilde{\mathscr{F}}$ has bounded distance from linear foliation $\tilde{\mathscr{L}}$, then $\tilde{\mathscr{L}}$ is $A$-invariant.
\item If an $\tilde{f}$-invariant foliation $\tilde{\mathscr{F}}$ has bounded distance from an $A$-invariant linear foliation $\tilde{\mathscr{A}}$, then $\tilde{h}\tilde{\mathscr{F}}(x)=\tilde{\mathscr{A}}(\tilde{h}x)$
\end{enumerate}
\end{proposition}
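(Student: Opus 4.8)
\emph{Proof strategy.} The plan is to reduce both items to two elementary facts about affine lines in $\RR^2$: (i) two lines that lie at finite Hausdorff distance from one another are parallel; and (ii) a connected subset of a line that comes within a fixed finite distance of every point of that line is the whole line. The only dynamical inputs needed are that $\tilde f=A+\tilde p$ with $\tilde p$ being $\ZZ^2$-periodic — hence bounded and globally Lipschitz —, that $\tilde h=\mathrm{id}+\tilde u$ with $\tilde u$ being $\ZZ^2$-periodic and bounded, that $\tilde h\tilde f^{\,n}=A^{n}\tilde h$ for all $n\in\NN$, and that $A$ is $\RR$-diagonalizable with both eigenvalues of modulus $>1$; in particular a linear foliation is $A$-invariant exactly when its direction is the eigenline $E^u_A$ or $E^c_A$.

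For item (1), I would fix $x$ and chase Hausdorff distances between leaves. Since $\tilde{\mathscr F}$ is $\tilde f$-invariant, $\tilde f\big(\tilde{\mathscr F}(x)\big)=\tilde{\mathscr F}(\tilde f x)$; because $\tilde f$ is globally Lipschitz, this set lies within bounded Hausdorff distance of $\tilde f\big(\tilde{\mathscr L}(x)\big)$, and because $\tilde f-A=\tilde p$ is bounded, $\tilde f\big(\tilde{\mathscr L}(x)\big)$ lies within bounded Hausdorff distance of the line $A\,\tilde{\mathscr L}(x)$, whose direction is $Av$ (with $v$ spanning $\tilde{\mathscr L}$). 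On the other hand, $\tilde{\mathscr F}(\tilde f x)$ lies within bounded Hausdorff distance of $\tilde{\mathscr L}(\tilde f x)$ by hypothesis. Chaining these, the lines $\tilde{\mathscr L}(\tilde f x)$ (direction $v$) and $A\,\tilde{\mathscr L}(x)$ (direction $Av$) are at finite Hausdorff distance, so by (i) they are parallel; thus $Av$ is collinear with $v$, $v$ is an eigenvector of $A$, and $\tilde{\mathscr L}$ is $A$-invariant.

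For item (2), I would first promote the hypothesis to a bound that is \emph{uniform in the base leaf}: since $\tilde u$ is bounded and $\tilde{\mathscr A}(z)$, $\tilde{\mathscr A}(\tilde h z)$ are parallel lines at distance $\le|\tilde u(z)|$, there is a finite $D$ with $\tilde h\big(\tilde{\mathscr F}(z)\big)$ and $\tilde{\mathscr A}(\tilde h z)$ within Hausdorff distance $D$ for \emph{every} $z$. Applying this with $z=\tilde f^{\,n}x$ and using $\tilde h\tilde f^{\,n}=A^{n}\tilde h$ together with the $A$-invariance of $\tilde{\mathscr A}$, the sets $A^{n}\tilde h\big(\tilde{\mathscr F}(x)\big)$ and $A^{n}\tilde{\mathscr A}(\tilde h x)$ stay within Hausdorff distance $D$ for all $n\in\NN$. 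If some $p\in\tilde h\big(\tilde{\mathscr F}(x)\big)$ had positive distance to the line $\tilde{\mathscr A}(\tilde h x)$, then, decomposing $p-\tilde h x$ in the eigenbasis of $A$, its component along the eigenline complementary to $\tilde{\mathscr A}$ would be nonzero; since that eigenvalue has modulus $>1$, the distance from $A^{n}p$ to $A^{n}\tilde{\mathscr A}(\tilde h x)$ would tend to infinity, contradicting the uniform bound. Hence $\tilde h\big(\tilde{\mathscr F}(x)\big)\subseteq\tilde{\mathscr A}(\tilde h x)$; being a continuous image of the connected leaf $\tilde{\mathscr F}(x)$, it is a connected subset of this line, and the other half of the Hausdorff bound makes it come within $D$ of every point of the line, so (ii) gives $\tilde h\big(\tilde{\mathscr F}(x)\big)=\tilde{\mathscr A}(\tilde h x)$.

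The Lipschitz-constant and bounded-perturbation bookkeeping is routine. The one step that deserves attention is the passage to the base-leaf–uniform bound in item (2), which is what powers the iteration; this is available for free because all objects descend to the compact torus $\TT^2$. The genuinely substantive point is the transverse-expansion argument at the end of item (2): it is precisely the fact that \emph{both} eigenvalues of $A$ exceed $1$ in modulus (equivalently, $A$ is expanding) that forces the image leaf into the eigenline, so this is where the expanding, irrational-eigenvalue hypothesis on $A$ is essential.
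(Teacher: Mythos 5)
Your proposal is correct, and in item (2) it is essentially the paper's argument: a bound that is uniform over base points, transported along the forward orbit via $\tilde h\tilde f^{\,n}=A^{n}\tilde h$ and the $A$-invariance of $\tilde{\mathscr A}$, contradicts the transverse expansion of $A$ (both eigenvalues of modulus $>1$) unless $\tilde h\tilde{\mathscr F}(x)$ lies inside $\tilde{\mathscr A}(\tilde h x)$. The differences are worth recording. In item (1) you bypass the semi-conjugacy and work directly with $\tilde f=A+\tilde p$, $\tilde p$ being $\ZZ^2$-periodic and Lipschitz, comparing the line $\tilde{\mathscr L}(\tilde f x)$ (direction $v$) with $A\tilde{\mathscr L}(x)$ (direction $Av$) through the leaf $\tilde{\mathscr F}(\tilde f x)$; the paper instead pushes one iterate to the $A$-side via $\tilde h\tilde f=A\tilde h$ and compares $A\tilde{\mathscr L}(\tilde h x)$ with $\tilde{\mathscr L}(A\tilde h x)$. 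The two computations are equivalent bookkeeping, and yours is marginally more elementary in that it uses only that $\tilde f$ is a bounded perturbation of $A$, not the existence of $\tilde h$. In item (2) you go beyond the paper by proving that the inclusion $\tilde h\tilde{\mathscr F}(x)\subseteq\tilde{\mathscr A}(\tilde h x)$ is an equality, using connectedness of the image and the reverse half of the distance bound; the paper controls only $\sup\{d(y,\tilde{\mathscr A}(\tilde h x)):y\in\tilde h\tilde{\mathscr F}(x)\}$ and asserts the equality without comment, so this is an improvement in completeness. One shared caveat: the paper's $d(S,L)$ is one-sided, so the symmetric Hausdorff chaining in your item (1) and the ``comes within $D$ of every point of the line'' step in your item (2) require the (easy, but worth stating) observation that a properly embedded, quasi-isometric leaf trapped in a bounded strip around a line is unbounded there and in fact shadows the whole line; the paper's own triangle-inequality step bounding $d(A\tilde{\mathscr L}(\tilde h x),\tilde{\mathscr L}(A\tilde h x))$ through the set $\tilde h\tilde{\mathscr F}(\tilde f x)$ relies on the same point, so this is not a gap peculiar to your write-up.
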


\begin{proof}
~
\begin{enumerate}
\item
Suppose $\sup_{x\in \RR^2}d(\tilde{\mathscr{F}}(x),\tilde{\mathscr{L}}(x))=M$.\\
From $d(\tilde{h}\tilde{\mathscr{F}}(x),\tilde{\mathscr{L}}(\tilde{h}x))\leq 2||\tilde{h}-id||+M$, we know that
$$d(A\tilde{h}\tilde{\mathscr{F}}(x),A\tilde{\mathscr{L}}(\tilde{h}x))\leq ||A||(2||\tilde{h}-id||+M).$$
and
$$d(A\tilde{h}\tilde{\mathscr{F}}(x),\tilde{\mathscr{L}}(A\tilde{h}x))
=d(\tilde{h}\tilde{\mathscr{F}}(\tilde{f}x),\tilde{\mathscr{L}}(\tilde{h}\tilde{f}x))\leq 2||\tilde{h}-id||+M.$$
So $d(A\tilde{\mathscr{L}}(\tilde{h}x)),\tilde{\mathscr{L}}(A\tilde{h}x))\leq (||A||+1)(2||\tilde{h}-id||+M)<+\infty$. Thus $A\tilde{\mathscr{L}}=\tilde{\mathscr{L}}$.
\item
Suppose $\tilde{h}\tilde{\mathscr{F}}\neq\tilde{\mathscr{A}}$, then there is $x\in\mathbb{T}^2$ such that $d(\tilde{h}\tilde{\mathscr{F}}(x),\tilde{\mathscr{A}}(\tilde{h}x))>0$. Suppose $\sup_{x\in \RR^2}d(\tilde{\mathscr{F}}(x),\tilde{\mathscr{A}}(x))=M$.\\
On the one hand, from the condition that $A$ is expanding, we know that $$\lim_{n\to +\infty}d(A^n\tilde{h}\tilde{\mathscr{F}}(x),A^n\tilde{\mathscr{A}}(\tilde{h}x))=+\infty.$$
On the other hand,
\begin{equation*}
\begin{split}
d(A^n\tilde{h}\tilde{\mathscr{F}}(x),A^n\tilde{\mathscr{A}}(\tilde{h}x))
&=d(\tilde{h}\tilde{\mathscr{F}}(\tilde{f}^nx),\tilde{\mathscr{A}}(A^n\tilde{h}x))\\
&=d(\tilde{h}\tilde{\mathscr{F}}(\tilde{f}^nx),\tilde{\mathscr{A}}(\tilde{h}\tilde{f}^nx))\\
&\leq 2||\tilde{h}-id||+d(\tilde{\mathscr{F}}(\tilde{f}^nx),\tilde{\mathscr{A}}(\tilde{f}^nx))\\
&\leq 2||\tilde{h}-id||+M
\end{split}
\end{equation*}
It is a contradiction. So $\tilde{h}\tilde{\mathscr{F}}=\tilde{\mathscr{A}}$.
\end{enumerate}
\end{proof}

\begin{lemma}\label{P}
\cite[Proposition 4.A.2.]{Potrie}
Given a one dimensional orientable foliation $\mathscr{F}$ of $\TT^2$, we have that there exists a subspace
$L\subset \RR^2$ and $R>0$ such that every leaf of the lift $\tilde{\mathscr{F}}$ of $\mathscr{F}$ lies in a $R$-neighborhood of a translate of $L$.
\end{lemma}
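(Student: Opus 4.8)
The plan is to first extract from the foliation a well-defined \emph{asymptotic line} $L\subset\RR^2$, and then to show that every leaf of the lift $\tilde{\mathscr F}$ is trapped in a uniformly bounded neighbourhood of a translate of $L$.

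\emph{Producing $L$.} Since $\mathscr F$ is one-dimensional and orientable, $T\mathscr F$ is a trivial line bundle; after passing to a topologically equivalent foliation --- harmless, since the conclusion is a coarse-geometric property preserved under maps at bounded distance from linear ones --- I may take $\mathscr F$ to be the orbit foliation of a fixed-point-free flow $\phi^t$ on $\TT^2$, and lift it to a flow on $\RR^2$ commuting with the $\ZZ^2$ deck action. If $\mathscr F$ has a closed leaf $c$, then $c$ is non-nullhomotopic (a nullhomotopic one would bound a disc carrying, by Poincar\'e--Hopf, a fixed point of the flow), so its class $v\in\ZZ^2$ is nonzero and primitive; any two closed leaves are disjoint non-nullhomotopic simple closed curves, hence isotopic, hence homologous up to sign, so $L:=\RR v$ is well defined. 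If $\mathscr F$ has no closed leaf, then $\phi^t$ is fixed-point-free with no closed orbit, so it admits a global circle transversal and is the suspension of a circle homeomorphism of irrational rotation number; the semiconjugacy to the corresponding linear suspension is at bounded distance from the identity on the universal cover, and I take $L$ to be the asymptotic direction of that linear model. (Equivalently: every $\phi$-invariant probability measure has a Schwartzman asymptotic cycle in $H_1(\TT^2;\RR)=\RR^2$; these are all proportional --- two orbits escaping to infinity in linearly independent directions are disjoint properly embedded lines in $\RR^2$ forced to cross --- and not all zero --- $\RR^2$ has no bounded properly embedded line --- and $L$ is their common span.)

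\emph{The uniform bound.} Split $\RR^2=L\oplus L^{\perp}$ and let $\pi:\RR^2\to L^{\perp}\cong\RR$ be the projection, so that ``$\tilde\ell$ lies in the $R$-neighbourhood of a translate of $L$'' is exactly $\operatorname{diam}\pi(\tilde\ell)\le R$. Each leaf of $\tilde{\mathscr F}$ is a properly embedded line, and two leaves are disjoint or equal. First I would show that each individual leaf has finite $\pi$-diameter: in the closed-leaf case because the lifts of the closed leaves are $\ZZ v$-periodic lines, each within bounded $L^{\perp}$-distance of a translate of $L$, cutting $\RR^2$ into periodic strips of bounded $L^{\perp}$-width in one of which $\tilde\ell$ is caught; in the no-closed-leaf case because an orbit whose lift had unbounded $\pi$-projection would, through the suspension (or asymptotic-cycle) structure above, be forced to meet one of its own $\ZZ^2$-translates, impossible for leaves of a foliation. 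Then I would upgrade these finite bounds to a single $R$: since $\ZZ^2$ acts cocompactly and translating a leaf by a lattice vector merely shifts the ambient translate of $L$, it suffices to bound $\operatorname{diam}\pi(\tilde\ell)$ over the leaves meeting one fundamental domain, and the mechanism is that disjoint leaves cannot spread arbitrarily far inside the \emph{compact} torus --- quantified by covering $\TT^2$ with finitely many flow boxes and using the disjointness and transverse linear order of the $\ZZ^2$-family of leaves to force $\pi$ to advance at a bounded rate as one crosses leaves.

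\emph{The main obstacle.} The delicate point is precisely this uniformity, and most acutely when $L$ has irrational slope: then $\RR^2/L$ does not descend to a circle on $\TT^2$, there is no closed transversal in the $L$-direction to ``pinch'' the leaves, and $R$ cannot be obtained one leaf at a time; it must be squeezed out of the way the whole $\ZZ^2$-orbit of disjoint leaves fits together in a compact surface, which also requires controlling the possibility of infinitely many leaves accumulating onto a limit leaf that itself escapes transverse to $L$. The clean way to organise this --- and the route I would take --- is via the leaf space of $\tilde{\mathscr F}$, a simply connected, possibly non-Hausdorff, one-manifold carrying a $\ZZ^2$-action, from whose translation number one reads off $L$ and from whose cocompactness one reads off $R$; this is essentially the argument of \cite[Proposition 4.A.2.]{Potrie}, and I expect the bulk of a full proof to be exactly this bookkeeping. (In the situation actually needed later in this paper the relevant foliations turn out to be Reebless suspensions, so their leaf spaces are genuinely $\RR$ and the argument simplifies considerably.)
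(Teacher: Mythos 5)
The paper does not actually prove this lemma: it is imported verbatim from Potrie's thesis (Proposition 4.A.2), so there is no internal argument to compare against, and your sketch is indeed the standard route that the cited reference takes -- orientability gives a nonsingular flow, then a dichotomy according to whether a closed leaf exists, with the rational class of the closed leaves giving $L$ in one case and the irrational rotation number of a suspension giving $L$ in the other. Your irrational case is essentially complete once you quote two classical facts (a nonsingular flow on $\TT^2$ with no periodic orbit admits a global cross-section, hence is a suspension; and the Poincar\'e estimate $|F^n(x)-x-n\rho|<1$ for a lift $F$ of a circle homeomorphism of rotation number $\rho$, transported by an identification whose lift is within bounded distance of a linear map). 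Note that this deviation bound is already \emph{uniform} in $x$ and $n$, so in the irrational case the uniform $R$ comes for free from your own reduction; the ``most acute'' difficulty you describe there is largely illusory.

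The one place the write-up falls short of a proof is the closed-leaf case, and the fix is easier than the leaf-space bookkeeping you defer to. As stated, ``the lifts of the closed leaves \ldots cutting $\RR^2$ into periodic strips of bounded $L^{\perp}$-width'' is unjustified: a single embedded essential circle in class $(1,0)$ can have arbitrarily large transverse excursion (project the graph of $x\mapsto 10\sin(2\pi x)$, which is embedded in $\TT^2$), there may be infinitely many closed leaves, and pairwise disjointness alone does not bound their excursions uniformly -- so ``bounded width'' of the strips cut by \emph{all} closed leaves is essentially the statement being proved. Instead, fix \emph{one} closed leaf $c_0$, let $E_0$ be the $L^{\perp}$-excursion of its lift $\tilde c_0$, and use the full family of deck translates $\tilde c_0+w$, $w\in\ZZ^2$: every other leaf of $\tilde{\mathscr F}$ is disjoint from all of these (or equal to one), hence, being connected, is trapped between two consecutive translates $\tilde c_0+kw_0$, $\tilde c_0+(k+1)w_0$ with $w_0$ generating $\ZZ^2/\ZZ v$, and so has excursion at most $E_0+|\pi(w_0)|$. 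This gives the uniform $R$ at once, with no flow-box counting or non-Hausdorff leaf space needed; with that substitution your outline becomes a correct, self-contained proof modulo the classical suspension facts above.
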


By Lemma \ref{P}, there is a linear foliation $\tilde{L}^u$ such that $\tilde{\mathscr{F}}^u$ has a bounded distance from $\tilde{L}^u$. According to item 1 of Proposition \ref{bounded distance}, $\tilde{L}^u$ must be $A-$invariant. Thus $\tilde{L}^u =\tilde{\mathscr{A}}^c$ or $\tilde{\mathscr{A}}^u$, in particular, $\tilde{L}^u$ has irrational slope. Similarly, there is a linear foliation $\tilde{L}^c$ such that $\tilde{\mathscr{F}}^c$ has a bounded distance from $\tilde{L}^c$, $\tilde{L}^c =\tilde{\mathscr{A}}^c$ or $\tilde{\mathscr{A}}^u$. By item 2 of Proposition \ref{bounded distance}, $\tilde{h}\tilde{\mathscr{F}}^{\sigma}=\tilde{L}^{\sigma}, \sigma=u,c$.

Recall that given two transverse foliations $\mathscr{F}_1$ and $\mathscr{F}_2$ of a manifold $M$, we say that they admit a Global Product Structure if given two points $x,y\in\tilde{M}$, the universal cover of $M$, we have that $\tilde{\mathscr{F}}_1(x)$ and $\tilde{\mathscr{F}}_2(y)$ intersect at a unique point.

\begin{theorem}
\cite[Theorem VIII.2.2.1]{HH}
Consider a codimension one foliation $\mathscr{F}$ without holonomy of a compact manifold $M$. Then, for every $\mathscr{F}^{'}$ foliation transverse to $\mathscr{F}$, we have that $\mathscr{F}$ and $\mathscr{F}^{'}$ have Global Product Structure.
\end{theorem}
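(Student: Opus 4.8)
The plan is to lift everything to the universal cover $p:\tilde M\to M$, writing $\tilde{\mathscr F}$ and $\tilde{\mathscr F}'$ for the lifts of $\mathscr F$ and $\mathscr F'$, and to prove that each leaf of $\tilde{\mathscr F}$ meets each leaf of $\tilde{\mathscr F}'$ in exactly one point; since $p$ is a local homeomorphism carrying $\tilde{\mathscr F}$ to $\mathscr F$ and $\tilde{\mathscr F}'$ to $\mathscr F'$, this is precisely what Global Product Structure asks for. The structural input I would invoke is the theory of codimension-one foliations without holonomy on closed manifolds (Sacksteder and Imanishi, developed in the chapters of \cite{HH} preceding this statement): $\tilde{\mathscr F}$ is $\RR$-covered, so that its leaf space $Q:=\tilde M/\tilde{\mathscr F}$ is a Hausdorff, connected, simply connected one-manifold, hence homeomorphic to $\RR$; the quotient map $\tau:\tilde M\to\RR$ is continuous, open, and in each foliation box of $\tilde{\mathscr F}$ agrees with the transverse coordinate, with the leaves of $\tilde{\mathscr F}$ as its fibres; and, crucially, every bi-infinite curve transverse to $\tilde{\mathscr F}$ has $\tau$-image all of $\RR$, the no-holonomy hypothesis being exactly what rules out the transverse coordinate ``compressing'' as such a curve accumulates on a leaf.

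Granting this, the rest is soft. Fix a leaf $\ell'$ of $\tilde{\mathscr F}'$. As $\mathscr F'$ is one-dimensional and transverse to $\mathscr F$, inside any foliation box of $\tilde{\mathscr F}$ the set $\ell'$ is a graph over the transverse coordinate, so $\tau|_{\ell'}:\ell'\to\RR$ is a local homeomorphism. Since $\ell'$ is a connected one-manifold it is homeomorphic to $\RR$ or to $S^1$, and it cannot be $S^1$, because the image of a local homeomorphism $S^1\to\RR$ would be a nonempty open compact subset of $\RR$. Hence $\ell'\cong\RR$; a local homeomorphism $\RR\to\RR$ has no local extrema, so it is strictly monotone and in particular injective, whence $\ell'$ meets each leaf $\tau^{-1}(t)$ of $\tilde{\mathscr F}$ in at most one point. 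On the other hand $\ell'\cong\RR$ is a bi-infinite transverse curve, so by the structural statement $\tau|_{\ell'}$ is onto $\RR$, i.e., $\ell'$ meets every leaf of $\tilde{\mathscr F}$. Thus each leaf of $\tilde{\mathscr F}$ meets $\ell'$ in exactly one point, and since $\ell'$ was an arbitrary leaf of $\tilde{\mathscr F}'$ this is the Global Product Structure.

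I expect the genuine difficulty to be entirely in the structural input of the first paragraph --- upgrading ``$\mathscr F$ has no holonomy'' to ``$\tilde{\mathscr F}$ is $\RR$-covered with every bi-infinite transversal onto the leaf space.'' That the leaves of $\tilde{\mathscr F}$ are proper and separating, that the leaf space is Hausdorff, and that the global holonomy pseudogroup cannot contract a transversal, is exactly the Sacksteder--Imanishi analysis of one-sided holonomy; in \cite{HH} it is the material accumulated before Theorem VIII.2.2.1, so in a self-contained treatment one would either cite it wholesale or reproduce the Denjoy-type argument excluding exceptional minimal sets together with the holonomy-vanishing estimate. Everything past that point --- the local-homeomorphism/monotonicity step and the bookkeeping of intersection points --- is routine.
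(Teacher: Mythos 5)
This statement is not proved in the paper at all: it is quoted verbatim from Hector--Hirsch \cite{HH} (Theorem VIII.2.2.1) and used as a black box, so there is no internal argument to compare yours against. On its own terms, your outline has the right architecture (pass to the universal cover, identify the leaf space of $\tilde{\mathscr{F}}$ with $\RR$, show each lifted leaf of $\tilde{\mathscr{F}}'$ maps to that leaf space injectively and surjectively), and the injectivity/monotonicity half is indeed routine. But the proposal is not a proof: all of the substance is deferred to ``the material accumulated before Theorem VIII.2.2.1'' in \cite{HH}, i.e.\ to the very source whose theorem is being established, so at best you have reduced the easy half and cited the hard half.

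More seriously, the key structural input is stated in a form that is false. It is not true that every bi-infinite curve transverse to $\tilde{\mathscr{F}}$ has image all of the leaf space: take $\mathscr{F}$ to be a linear foliation of $\TT^2$ (which has no holonomy); in coordinates where its lift is the foliation of $\RR^2$ by horizontal lines, the graph of $\arctan$ is a properly embedded bi-infinite curve transverse to every leaf it meets, yet its image in the leaf space is the bounded interval $(-\pi/2,\pi/2)$. So ``no holonomy rules out the transverse coordinate compressing'' cannot be the mechanism for arbitrary transversals in $\tilde{M}$. The surjectivity step must use that $\ell'$ is the lift of a leaf of a foliation of the \emph{compact} manifold $M$ transverse to $\mathscr{F}$ everywhere (uniform transversality, equivariance under the deck group, recurrence), combined with the no-holonomy hypothesis --- and that combination is precisely the content of the Sacksteder--Imanishi theory and of the cited theorem; note also that no holonomy is already needed to make the lifted leaf space a Hausdorff copy of $\RR$ (a Reeb component destroys both this and the conclusion). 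As written, the crucial step is asserted in a wrong form and its proof is outsourced, so the attempt has a genuine gap rather than being an alternative route.
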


Since every leaf of $\mathscr{F}^u$ is embedded copies of $\RR$, it is simply connected, thus $\mathscr{F}^u$ is without holonomy. By the above Theorem, we know that foliation $\mathscr{F}^u$ and its transverse foliation $\mathscr{F}^c$ have Global Product Structure. Thus $\tilde{L}^c\neq \tilde{L}^u$.

\begin{proposition}\label{center foliation}
Under the assumption of Theorem \ref{main}, if $E^c$ is $C^1$, then $\tilde{\mathscr{F}}^c$ has bounded distance from $\tilde{\mathscr{A}}^c$. Thus, $\tilde{h}\tilde{\mathscr{F}}^c(x)=\tilde{\mathscr{A}}^c(\tilde{h}x)$.
\end{proposition}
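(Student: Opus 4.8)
The plan is to pin down which linear foliation $\tilde L^c$ is: from the discussion above, $\tilde{\mathscr F}^c$ has bounded distance from $\tilde L^c$ and $\tilde L^c\in\{\tilde{\mathscr A}^c,\tilde{\mathscr A}^u\}$, and once we know $\tilde L^c=\tilde{\mathscr A}^c$ the identity $\tilde h\tilde{\mathscr F}^c(x)=\tilde{\mathscr A}^c(\tilde h x)$ is exactly item (2) of Proposition \ref{bounded distance}. Since $\tilde L^c\ne\tilde L^u$ while both lie in $\{\tilde{\mathscr A}^c,\tilde{\mathscr A}^u\}$, it is enough to exclude the swapped configuration, so suppose toward a contradiction that $\tilde{\mathscr F}^c$ has bounded distance from $\tilde{\mathscr A}^u$. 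By Proposition \ref{bounded distance}(2) this gives $\tilde h\tilde{\mathscr F}^c(x)=\tilde{\mathscr A}^u(\tilde h x)$, i.e.\ $\tilde h$ maps each center leaf onto a full strong leaf of $A$; in particular $\tilde h$ is non-constant on every center leaf.

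I would then derive a contradiction by estimating $I_n:=\int_{\TT^2}\|Df^n|_{E^c}\|\,d\mathrm{Leb}$ in two ways. For the upper bound, since $E^c$ and $E^u$ are one-dimensional and $Df$-invariant we have $\|Df^n|_{E^c_x}\|=\prod_{i=0}^{n-1}\|Df|_{E^c_{f^ix}}\|$ and likewise for $E^u$, so the pointwise inequality $\|Df|_{E^c_y}\|<\|Df|_{E^u_y}\|$ upgrades to $\|Df^n|_{E^c_x}\|<\|Df^n|_{E^u_x}\|$ for all $x$ and $n\ge1$. With $\theta(y)$ the angle between $E^c_y$ and $E^u_y$, which is bounded away from $0$ and $\pi$ by continuity of the splitting on the compact torus, the identity $|\det Df^n(x)|=\|Df^n|_{E^c_x}\|\cdot\|Df^n|_{E^u_x}\|\cdot\sin\theta(f^nx)/\sin\theta(x)$ gives $\|Df^n|_{E^c_x}\|^2<c_0^{-1}|\det Df^n(x)|$ for some $c_0>0$. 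As $f$ is a $|\lambda^u_A\lambda^c_A|$-to-one covering of $\TT^2$, change of variables yields $\int_{\TT^2}|\det Df^n|\,d\mathrm{Leb}=|\lambda^u_A\lambda^c_A|^n$, so by Cauchy--Schwarz
\begin{equation*}
I_n\ <\ c_0^{-1/2}\int_{\TT^2}|\det Df^n|^{1/2}\,d\mathrm{Leb}\ \le\ c_0^{-1/2}\,|\lambda^u_A\lambda^c_A|^{n/2}.
\end{equation*}

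For the lower bound I would use that $E^c$ is $C^1$, so $\mathscr F^c$ is a $C^1$ foliation and Lebesgue measure disintegrates, on any foliation box $B$ with plaques $\{P_t\}$, with conditional measures comparable to arclength along the plaques; hence $\int_B\|Df^n|_{E^c}\|\,d\mathrm{Leb}\asymp\int\mathrm{length}(\tilde f^nP_t)\,dt$ with $n$-independent constants, since $\int_{P_t}\|Df^n|_{E^c}\|\,d\ell$ is the length of the center segment $\tilde f^nP_t$. Now $\mathrm{length}(\tilde f^nP_t)\ge\mathrm{diam}(\tilde f^nP_t)\ge\mathrm{diam}\big(\tilde h(\tilde f^nP_t)\big)-2B_0=\mathrm{diam}\big(A^n\tilde h(P_t)\big)-2B_0$, where $B_0=\|\tilde h-\mathrm{id}\|$, and $A^n$ scales diameters along strong leaves by $|\lambda^u_A|^n$, so $\mathrm{length}(\tilde f^nP_t)\ge|\lambda^u_A|^n\,\mathrm{diam}(\tilde h(P_t))-2B_0$. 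Because $\tilde h$ is non-constant on every center leaf it is non-constant on some plaque, hence by continuity of $t\mapsto\mathrm{diam}(\tilde h(P_t))$ it is non-constant on a positive-measure family of plaques of a suitable box $B^*$, giving $m^*:=\int\mathrm{diam}(\tilde h(P_t))\,dt>0$. For large $n$ we then get
\begin{equation*}
I_n\ \ge\ \int_{B^*}\|Df^n|_{E^c}\|\,d\mathrm{Leb}\ \gtrsim\ |\lambda^u_A|^n m^*-C'\ \ge\ \tfrac12 m^*|\lambda^u_A|^n,
\end{equation*}
and comparing with the upper bound forces $|\lambda^u_A|^n\lesssim|\lambda^u_A\lambda^c_A|^{n/2}$, i.e.\ $|\lambda^u_A|\le|\lambda^c_A|$, contradicting $|\lambda^u_A|>|\lambda^c_A|$. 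Hence $\tilde L^c=\tilde{\mathscr A}^c$, and Proposition \ref{bounded distance}(2) finishes the proof.

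The heart of the matter is the two-sided control of the $L^1$-average $I_n$ of the scalar cocycle $\|Df^n|_{E^c}\|$. The upper bound is essentially automatic from the pointwise domination $\|Df^n|_{E^c}\|<\|Df^n|_{E^u}\|$ through the determinant identity and Cauchy--Schwarz; the real work is the lower bound, which must convert the leafwise expansion rate $|\lambda^u_A|$ dictated by the semiconjugacy in the swapped case into honest volume growth, and it is precisely there that the hypothesis ``$E^c$ is $C^1$'' enters — without it Lebesgue need not disintegrate controllably along $\mathscr F^c$. The remaining points are routine but should be checked: that the angle between $E^c$ and $E^u$ stays bounded away from $0$ and $\pi$, that $f$ (hence $f^n$) is a genuine covering so that $\int|\det Df^n|$ equals the mapping degree times the area, and that a map at bounded distance from the identity cannot collapse a center leaf onto a point.
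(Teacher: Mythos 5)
Your argument is correct, but it is a genuinely different route from the paper's. You reduce the proposition to excluding the ``swapped'' configuration $\tilde L^c=\tilde{\mathscr{A}}^u$ (legitimate, since the discussion preceding the proposition already gives $\tilde L^c,\tilde L^u\in\{\tilde{\mathscr{A}}^c,\tilde{\mathscr{A}}^u\}$, $\tilde h\tilde{\mathscr{F}}^{\sigma}=\tilde L^{\sigma}$ and $\tilde L^c\neq\tilde L^u$, all without the $C^1$ hypothesis), and you kill that configuration by a growth comparison: the degree identity $\int_{\TT^2}|\det Df^n|\,d\mathrm{Leb}=|\lambda^u_A\lambda^c_A|^n$ together with the domination $\|Df^n|_{E^c}\|\le\|Df^n|_{E^u}\|$ and the bounded angle caps the $L^1$-growth of $\|Df^n|_{E^c}\|$ at $|\lambda^u_A\lambda^c_A|^{n/2}$, while the semiconjugacy would force center plaques to stretch at rate $|\lambda^u_A|^n$, which you convert into an integral lower bound via absolute continuity of the $C^1$ foliation $\mathscr{F}^c$; since $|\lambda^u_A|^2>|\lambda^u_A\lambda^c_A|$, this is a contradiction. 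The paper instead proves bounded distance from $\tilde{\mathscr{A}}^c$ directly: it approximates $E^c$ by a $C^1$ vector field, uses Peixoto's closing lemma to get a nearby foliation with a closed leaf (hence within bounded distance $R$ of a rational linear foliation), pulls it back by $\tilde f^{-n}$, shows via the dominated splitting and Arzel\`a--Ascoli that the limiting curves are integral curves of $E^c$ — here $C^1$-ness enters through unique integrability — and then runs a distance chain yielding the explicit bound $4\|\tilde h-\mathrm{id}\|+R$ before invoking Proposition \ref{bounded distance}(2), exactly as you do at the end. So the two proofs use the $C^1$ hypothesis for different purposes (absolute continuity of the center foliation for you, unique integrability for the paper); yours avoids the closing-lemma/limit-flow machinery at the cost of the measure-theoretic disintegration step, while the paper's is closer to the Hall--Hammerlindl technique and produces a concrete distance bound. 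Two small points to tighten: non-constancy of $\tilde h$ on every center leaf is most cleanly justified by noting that each lifted leaf is unbounded (quasi-isometry) while $\|\tilde h-\mathrm{id}\|<\infty$, rather than by claiming $\tilde h$ maps leaves \emph{onto} strong leaves; and when locating a plaque on which $\tilde h$ is non-constant, you should say explicitly that you cover an arc with non-constant image by finitely many plaque-sized subarcs and center the box $B^*$ at one on which $\tilde h$ is still non-constant.
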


\begin{proof}
\indent The following method is inspired by \cite{HallHammerlindl}.\\
\indent
Choose a $C^1$ vector field $Y$ closed to $E^c$ on $\mathbb{T}^2$ s.t. $Y(x)\notin E^u(x)$ for any $x\in\mathbb{T}^2$. Consider the $C^1$ neighborhood $\mathscr{U}=\{X\in\mathscr{X}^1(\mathbb{T}^2):X(x)\notin E^u(x),\forall x\in \mathbb{T}^2\}$ of $Y$, then by Peixoto's closing Lemma, there is
a $C^1$ vector field $Y^{\epsilon}\in\mathscr{U}$ s.t. $Y^{\epsilon}$ integrates to $\mathscr{F}^{\epsilon}$ which has a closed leaf on $\mathbb{T}^2$. Thus, by Lemma \ref{P}, the lift $\tilde{\mathscr{F}}^{\epsilon}$ of $\mathscr{F}^{\epsilon}$ has bounded distance $R$ from the rational linear foliation $\tilde{\mathscr{L}}$.\\
\indent Consider the vector field on $\RR^2$. Suppose $\phi_t(x)$ is the flow of the lift $\tilde{Y}^{\epsilon}$ of $Y^{\epsilon}$ on $\RR^2$.
Since $\tilde{f}^{-n}\phi_t(\tilde{f}^{n}x)$ is the flow of the vector field $D\tilde{f}^{-n}(y)\tilde{Y}^{\epsilon}(y)$, there is a flow $\phi^n_t(x)$ s.t. $$\frac{d}{dt}\phi^n_t(x)=X^n(\phi^n_t(x)) \quad\text{and}\quad X^n(x)=\frac{D\tilde{f}^{-n}(\tilde{f}^nx)\tilde{Y}^{\epsilon}(\tilde{f}^nx)}{|D\tilde{f}^{-n}(\tilde{f}^nx)\tilde{Y}^{\epsilon}(\tilde{f}^nx)|}.$$
\indent
Since $f$ is special partially hyperbolic endomorphism with splitting $T\mathbb{T}^2=E^c\oplus E^u$, there is $\lambda\in (0,1)$ such that $||Df|_{E^c(x)}||\leq \lambda||Df|_{E^u(x)}||$ for all $x\in\mathbb{T}^2$.
For any $v=v_c+v_u\notin E^u$, since
$$\frac{|Df^{-n}v_u|}{|Df^{-n}v_c|}\leq \lambda^n\frac{|v_u|}{|v_c|}$$
we have that $<X^n>\rightrightarrows \tilde{E}^c(n\to+\infty)$. So there is subsequence $\{n_k\}$ of $\{n\}$ s.t. $$\sup_{x\in \RR^2}||X^{n_k}(x)-X(x)||\to 0\quad(k\to +\infty)$$ with $|X|=1$ and $X//\tilde{E}^c$.

\indent Fix an initial point $x$ and a period of time $[-T,T]$. Notice that
$$|\phi^n_t(x)-\phi^n_s(x)|\leq \int_s^t|X^n(\phi^n_r(x)|dr\leq |s-t|,\quad s,t\in [-T,T],$$
and
$$|\phi^n_t(x)-x|\leq \int^t_0|X^n(\phi^n_r(x)|dr\leq T,\quad t\in [-T,T].$$
According to Arzela-Ascoli argument, there is a subsequence $\{n'_k\}$ of $\{n_k\}$ such that
$$\phi^{n'_k}_t(x)\rightrightarrows\psi_t(x)\quad(k\to +\infty)$$
Since $\phi^{n'_k}_t(x)$ satisfies that $\phi^{n'_k}_t(x)=x+\int_0^tX^{n'_k}(\phi^{n'_k}_r(x))dr$, and
\begin{equation*}
\begin{split}
&|\int_0^tX^{n'_k}(\phi^{n'_k}_r(x))dr-\int_0^tX(\psi_r(x))dr|\\
&\leq
\int_0^t|X^{n'_k}(\phi^{n'_k}_r(x))-X(\phi^{n'_k}_r(x))|dr+\int_0^t|X(\phi^{n'_k}_r(x))-X(\psi_r(x))|dr\\
&\rightrightarrows 0 \quad (k\to+\infty)
\end{split}
\end{equation*}
Thus $\psi_t(x)=\lim_{k\to+\infty}\phi^{n'_k}_t(x)=x+\int_0^tX(\psi_r(x))dr$. It means that $\psi_t(x)$ is a flow of vector field $X$ on $\RR^2$.
Since $E^c$ is $C^1$, $E^c$ is locally uniquely integrable. Thus $\psi_t(x)\subset \tilde{\mathscr{F}}^c(x)$.\\
\indent Noticing that for any $x\in\mathbb{T}^2$, we have
\begin{equation*}
\begin{split}
d(\tilde{\mathscr{F}}^c(x),\tilde{\mathscr{A}}^c(x))
&\leq ||\tilde{h}-id||+d(\tilde{\mathscr{F}}^c(x),\tilde{\mathscr{A}}^c(\tilde{h}x))\\
&\leq ||\tilde{h}-id||+\limsup_{n\to+\infty}d(\tilde{f}^{-n}\tilde{\mathscr{F}}^{\epsilon}(\tilde{f}^{n}x),A^{-n}\tilde{\mathscr{L}}(A^{n}\tilde{h}x))\\
&\leq 2||\tilde{h}-id||+\limsup_{n\to+\infty}d(\tilde{h}\tilde{f}^{-n}\tilde{\mathscr{F}}^{\epsilon}(\tilde{f}^{n}x),
A^{-n}\tilde{\mathscr{L}}(A^{n}\tilde{h}x))\\
&=2||\tilde{h}-id||+\limsup_{n\to+\infty}d(A^{-n}\tilde{h}\tilde{\mathscr{F}}^{\epsilon}(\tilde{f}^{n}x),
A^{-n}\tilde{\mathscr{L}}(\tilde{h}\tilde{f}^{n}x))\\
&\leq 2||\tilde{h}-id||+\sup_{n\in\NN}d(\tilde{h}\tilde{\mathscr{F}}^{\epsilon}(\tilde{f}^{n}x),
\tilde{\mathscr{L}}(\tilde{h}\tilde{f}^{n}x))\\
&\leq 4||\tilde{h}-id||+\sup_{n\in\NN}d(\tilde{\mathscr{F}}^{\epsilon}(\tilde{f}^{n}x),\tilde{\mathscr{L}}(\tilde{f}^{n}x))
\leq 4||\tilde{h}-id||+R
\end{split}
\end{equation*}
Thus the lift $\tilde{\mathscr{F}}^c$ of $\mathscr{F}^c$ has bounded distance $4||\tilde{h}-id||+R$ from the linear foliation $\tilde{\mathscr{A}}^c$. Furthermore, by item 2 of Proposition \ref{bounded distance}, $\tilde{h}\tilde{\mathscr{F}}^c=\tilde{\mathscr{A}}^c$.
\end{proof}

\begin{proposition}\label{unstable foliation}
Under the assumption of Theorem \ref{main}, if $E^c$ is $C^1$, then $\tilde{\mathscr{F}}^u$ has bounded distance from $\tilde{\mathscr{A}}^u$. Thus, $\tilde{h}\tilde{\mathscr{F}}^u(x)=\tilde{\mathscr{A}}^u(\tilde{h}x)$.
\end{proposition}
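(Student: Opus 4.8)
The plan is to deduce this proposition directly from Proposition \ref{center foliation}, the $A$-invariant dichotomy for the linear models of $\tilde{\mathscr{F}}^u$ and $\tilde{\mathscr{F}}^c$, and the Global Product Structure already established above; no new compactness or closing argument is needed.

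First I would recall what has been proved just before the statement. By Lemma \ref{P} applied to the orientable one-dimensional foliation $\mathscr{F}^u$, the lift $\tilde{\mathscr{F}}^u$ has bounded distance from a linear foliation $\tilde{L}^u$, and by item 1 of Proposition \ref{bounded distance} the line $L^u$ is $A$-invariant; since the irrational eigenvalues of $A$ are distinct reals, $A$ has exactly two eigenlines, so $\tilde{L}^u\in\{\tilde{\mathscr{A}}^c,\tilde{\mathscr{A}}^u\}$. The same applies to $\tilde{\mathscr{F}}^c$, giving $\tilde{L}^c\in\{\tilde{\mathscr{A}}^c,\tilde{\mathscr{A}}^u\}$, and the Global Product Structure of the transverse pair $(\mathscr{F}^c,\mathscr{F}^u)$ gives $\tilde{L}^c\neq\tilde{L}^u$.

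Now I would use the hypothesis $E^c\in C^1$. Proposition \ref{center foliation} asserts that $\tilde{\mathscr{F}}^c$ has bounded distance from $\tilde{\mathscr{A}}^c$. Since two distinct linear foliations of $\RR^2$ through the origin diverge linearly, a given foliation can have bounded distance from at most one of them; hence $\tilde{L}^c=\tilde{\mathscr{A}}^c$. Combined with $\tilde{L}^u\neq\tilde{L}^c$ and $\tilde{L}^u\in\{\tilde{\mathscr{A}}^c,\tilde{\mathscr{A}}^u\}$, this forces $\tilde{L}^u=\tilde{\mathscr{A}}^u$, i.e., $\tilde{\mathscr{F}}^u$ has bounded distance from $\tilde{\mathscr{A}}^u$. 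Finally I would invoke item 2 of Proposition \ref{bounded distance} with $\tilde{\mathscr{F}}=\tilde{\mathscr{F}}^u$ and $\tilde{\mathscr{A}}=\tilde{\mathscr{A}}^u$ to conclude $\tilde{h}\tilde{\mathscr{F}}^u(x)=\tilde{\mathscr{A}}^u(\tilde{h}x)$, which is the second assertion.

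As for the main difficulty: essentially none remains, because the analytic core — constructing, via Peixoto's closing lemma and an Arzela-Ascoli argument, a center flow on $\RR^2$ together with an explicit bounded-distance estimate — was already carried out in the proof of Proposition \ref{center foliation}. The only steps needing a sentence of care are that bounded distance from a linear foliation determines that foliation uniquely, and the combinatorial matching of $\tilde{L}^u$ with $\tilde{\mathscr{A}}^u$ (rather than $\tilde{\mathscr{A}}^c$) via the Global Product Structure. A self-contained proof avoiding Proposition \ref{center foliation} would have to match the strong-unstable cone field of $f$ with the fastest linear rate $\lambda^u_A$ directly, but that is less clean, and the route through Proposition \ref{center foliation} is the natural one here.
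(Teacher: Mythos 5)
Your proposal is correct and follows essentially the same route as the paper: it combines Proposition \ref{center foliation} (which uses the hypothesis that $E^c$ is $C^1$) with the dichotomy $\tilde{L}^u\in\{\tilde{\mathscr{A}}^c,\tilde{\mathscr{A}}^u\}$, the Global Product Structure fact $\tilde{L}^c\neq\tilde{L}^u$, and item 2 of Proposition \ref{bounded distance}. The paper phrases the exclusion of $\tilde{L}^u=\tilde{\mathscr{A}}^c$ directly as a contradiction with Global Product Structure rather than via your uniqueness-of-the-linear-model remark, but the underlying argument is identical.
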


\begin{proof}
\indent
Suppose that $\tilde{L}^u=\tilde{\mathscr{A}}^c$, then $\mathscr{F}^u$ and $\mathscr{F}^c$ have a bounded distance from the same linear foliation, so they don't have Global Product Structure. It is a contradiction.
So $\tilde{L}^u\neq \tilde{\mathscr{A}}^c$, then $\tilde{L}^u=\tilde{\mathscr{A}}^u$. Moreover, by item 2 of Proposition \ref{bounded distance}, we get that $\tilde{h}\tilde{\mathscr{F}}^u(x)=\tilde{\mathscr{A}}^u(\tilde{h}x)$.\\
\indent
\end{proof}

Similarly to Lemma 4.8 in \cite{AnderssonRanter}, we have the following proposition.
\begin{proposition}\label{h^{-1}}
Under the assumption of Theorem \ref{main}, if $E^c$ is $C^1$, and $\tilde{\mathscr{F}}^c(x)\neq \tilde{\mathscr{F}}^c(y)$, then $\tilde{h}\tilde{\mathscr{F}}^c(x)\neq \tilde{h}\tilde{\mathscr{F}}^c(y)$. Thus $\tilde{h}^{-1}\tilde{h}(x)\subset \tilde{\mathscr{F}}^c(x)$.
\end{proposition}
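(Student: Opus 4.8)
The plan is to argue by contradiction, using the Global Product Structure of $\mathscr{F}^c$ and $\mathscr{F}^u$ together with the uniform expansion of $\tilde{f}$ along $E^u$ and the quasi-isometry of $\tilde{\mathscr{F}}^u$. So suppose $\tilde{\mathscr{F}}^c(x)\neq\tilde{\mathscr{F}}^c(y)$ while $\tilde{h}\tilde{\mathscr{F}}^c(x)=\tilde{h}\tilde{\mathscr{F}}^c(y)$, which by Proposition \ref{center foliation} means $\tilde{\mathscr{A}}^c(\tilde{h}x)=\tilde{\mathscr{A}}^c(\tilde{h}y)$. Let $z$ be the unique intersection point $\tilde{\mathscr{F}}^c(x)\cap\tilde{\mathscr{F}}^u(y)$ given by the Global Product Structure. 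Since $z\in\tilde{\mathscr{F}}^c(x)$ while $y\in\tilde{\mathscr{F}}^c(y)\neq\tilde{\mathscr{F}}^c(x)$, necessarily $z\neq y$. On the other hand $\tilde{h}z\in\tilde{h}\tilde{\mathscr{F}}^c(x)=\tilde{\mathscr{A}}^c(\tilde{h}x)=\tilde{\mathscr{A}}^c(\tilde{h}y)$, and by Proposition \ref{unstable foliation} also $\tilde{h}z\in\tilde{h}\tilde{\mathscr{F}}^u(y)=\tilde{\mathscr{A}}^u(\tilde{h}y)$. Because $\tilde{\mathscr{A}}^c$ and $\tilde{\mathscr{A}}^u$ are transverse linear foliations (the two distinct eigendirections of $A$), $\tilde{\mathscr{A}}^c(\tilde{h}y)\cap\tilde{\mathscr{A}}^u(\tilde{h}y)=\{\tilde{h}y\}$, hence $\tilde{h}z=\tilde{h}y$.

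Thus everything reduces to proving that $\tilde{h}$ is injective on each unstable leaf, which contradicts the triple $z\neq y$, $z,y\in\tilde{\mathscr{F}}^u(y)$, $\tilde{h}z=\tilde{h}y$. For this, let $\ell>0$ be the leafwise length of the arc of $\tilde{\mathscr{F}}^u(y)$ joining $z$ to $y$. Since $E^u$ is continuous and $||Df|_{E^u_p}||>1$ for every $p$, compactness of $\TT^2$ yields $\mu>1$ with $||Df|_{E^u_p}||\geq\mu$ for all $p$; as $\mathscr{F}^u$ is $\tilde{f}$-invariant and $\tilde{f}$ is a diffeomorphism of $\RR^2$, the image under $\tilde{f}^n$ of that arc is the arc of $\tilde{\mathscr{F}}^u(\tilde{f}^ny)$ joining $\tilde{f}^nz$ to $\tilde{f}^ny$, and has leafwise length at least $\mu^n\ell$. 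By the quasi-isometry of $\tilde{\mathscr{F}}^u$ there are $a\geq1$, $b\geq0$ with $(\text{leafwise distance})\leq a\cdot(\text{Euclidean distance})+b$ on each leaf, so
\[
|\tilde{f}^nz-\tilde{f}^ny|\ \geq\ \frac{1}{a}\big(\mu^n\ell-b\big)\ \longrightarrow\ +\infty\qquad (n\to\infty).
\]
But $\tilde{h}\tilde{f}^n=A^n\tilde{h}$ gives $\tilde{h}\tilde{f}^nz=A^n\tilde{h}z=A^n\tilde{h}y=\tilde{h}\tilde{f}^ny$, and $\tilde{h}-id$ is $\ZZ^2$-periodic hence bounded, so $|\tilde{f}^nz-\tilde{f}^ny|\leq 2\,||\tilde{h}-id||$ for all $n$ — a contradiction. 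This proves $\tilde{h}\tilde{\mathscr{F}}^c(x)\neq\tilde{h}\tilde{\mathscr{F}}^c(y)$ whenever $\tilde{\mathscr{F}}^c(x)\neq\tilde{\mathscr{F}}^c(y)$.

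For the final assertion, if $\tilde{h}w=\tilde{h}x$ then $\tilde{h}\tilde{\mathscr{F}}^c(w)=\tilde{\mathscr{A}}^c(\tilde{h}w)=\tilde{\mathscr{A}}^c(\tilde{h}x)=\tilde{h}\tilde{\mathscr{F}}^c(x)$ by Proposition \ref{center foliation}, whence $\tilde{\mathscr{F}}^c(w)=\tilde{\mathscr{F}}^c(x)$ by what was just proved; that is, $w\in\tilde{\mathscr{F}}^c(x)$, so $\tilde{h}^{-1}\tilde{h}(x)\subset\tilde{\mathscr{F}}^c(x)$.

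The only genuinely quantitative ingredients are the expansion constant $\mu>1$ on $E^u$ and the quasi-isometry inequality for $\tilde{\mathscr{F}}^u$; the first is immediate from compactness and the standing hypothesis $||Df|_{E^u_x}||>1$, and the second is exactly the quasi-isometry of the unstable foliation already recorded in the preliminaries. So I expect no real obstacle here — the whole geometric content is the global-product-structure trick producing the point $z$ followed by the leafwise injectivity of $\tilde{h}$, which is where the write-up should concentrate its care (in particular on checking that $\tilde{f}^n$ carries the sub-arc $[z,y]$ of a leaf exactly onto the sub-arc $[\tilde{f}^nz,\tilde{f}^ny]$, so that leafwise distance, not merely total arc length, is what is being expanded).
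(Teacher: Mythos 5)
Your proposal is correct and follows essentially the same route as the paper: both argue by contradiction, use the Global Product Structure to produce the auxiliary point $z$ on an unstable leaf, and play the boundedness coming from $\tilde{h}\tilde{f}^n=A^n\tilde{h}$ and $\|\tilde{h}-id\|<\infty$ against the unboundedness coming from uniform expansion along $E^u$ together with quasi-isometry of $\tilde{\mathscr{F}}^u$. The only cosmetic difference is that you first pin down $\tilde{h}z=\tilde{h}y$ via the single intersection point of the linear foliations $\tilde{\mathscr{A}}^c$ and $\tilde{\mathscr{A}}^u$, whereas the paper bounds the $\pi^u$- and $\pi^c$-projections of $\tilde{f}^nz-\tilde{f}^nx$ separately; the contradiction mechanism is the same.
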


\begin{proof}
Denote by $\pi^{u}$ the projection to $E^u_A$ along $E_A^c$, and $\pi^{c}$ the projection to $E_A^c$ along $E^u_A$, where $E^u_A$ and $E_A^c$ are the eigenspaces of $A$, and $E^u_A$ expands stronger than $E_A^c$.\\
\indent Suppose
$\tilde{h}\tilde{\mathscr{F}}^c(x)=\tilde{h}\tilde{\mathscr{F}}^c(y)=\tilde{\mathscr{A}}^c(a)$.
Denote $\tilde{\mathscr{F}}^u(x)\cap\tilde{\mathscr{F}}^c(y)=\{z\}$.\\
\indent On the one hand, since $\tilde{f}^nz\in\tilde{\mathscr{F}}^u(\tilde{f}^nx)$, $$|\pi^c(\tilde{f}^nz-\tilde{f}^nx)|<2\sup_{x\in\RR^2}d(\pi^c\tilde{\mathscr{F}}^u(x),\pi^c\tilde{\mathscr{A}}^u(x)).$$
On the other hand,
$$\tilde{h}\tilde{f}^nz=A^n\tilde{h}z\in A^n\tilde{\mathscr{A}}^c(a)=\tilde{\mathscr{A}}^c(A^na),$$
and
$$\tilde{h}\tilde{f}^nx=A^n\tilde{h}x\in A^n\tilde{\mathscr{A}}^c(a)=\tilde{\mathscr{A}}^c(A^na),$$
thus
$$|\pi^u(\tilde{f}^nz-\tilde{f}^nx)|\leq 2||\tilde{h}-id||+|\pi^u(\tilde{h}\tilde{f}^nz-\tilde{h}\tilde{f}^nx)|=2||\tilde{h}-id||.
$$
Combining the two sides together,
$$\sup_{n\in\NN}d(\tilde{f}^nz,\tilde{f}^nx)\leq 2||\tilde{h}-id||+2\sup_{x\in\RR^2}d(\pi^c\tilde{\mathscr{F}}^u(x),\pi^c\tilde{\mathscr{A}}^u(x))<+\infty.$$
However, by the quasi-isometric property of $\tilde{\mathscr{F}}^u$,
$$ad(\tilde{f}^nz,\tilde{f}^nx)+b\geq d_{\tilde{\mathscr{F}}^u}(\tilde{f}^nz,\tilde{f}^nx)\geq \min_{a\in\mathbb{T}^2} m(Df|_{E^u(a)})^nd_{\tilde{\mathscr{F}}^u}(z,x)\to +\infty(n\to \infty).$$
It is a contradiction, so the conclusion holds.
\end{proof}

A sufficient condition for the $C^2$ unstable bundle is stated in Theorem \ref{bundle smoothness}.

\begin{theorem}\label{bundle smoothness}
Let $f$ be a $C^3$ specially partially hyperbolic endomorphism on $\mathbb{T}^2$.
If $$\inf_{x\in\mathbb{T}^2}\{||Df|_{E^c(x)}||\cdot||Df|_{E^u(x)}||\}>1,$$
then $E^u$ is $C^2$ on $\mathbb{T}^2$.
\end{theorem}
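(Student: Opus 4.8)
The plan is to realize $E^u$ as a section of a fiber bundle that is invariant under a graph transform, and to apply the standard $C^k$-section theorem (in the spirit of Hirsch–Pugh–Shub, or the bunching criteria of Crovisier–Potrie / Hasselblatt). First I would set up the framework: since $f$ is only a local diffeomorphism, I work on the inverse limit $\mathbb{T}^2_f = \{(x_n)_{n\in\mathbb{Z}} : f(x_n)=x_{n+1}\}$ with the shift $\hat f$, so that the unstable bundle becomes a genuinely $\hat f$-invariant bundle over a compact space; because $f$ is \emph{special}, $E^u_x$ depends only on $x=x_0$ and not on the backward orbit, so $C^r$-regularity of the section over $\mathbb{T}^2_f$ descends to $C^r$-regularity over $\mathbb{T}^2$ (this is exactly where "special" is used, and I would state it as a short lemma).

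Next I would write the invariance equation for the unstable direction. Using the splitting $T\mathbb{T}^2 = E^c\oplus E^u$ and local coordinates adapted to it, $E^u$ is the graph of a map $\sigma_x : E^u_x \to E^c_x$, and $Df$-invariance of $E^u$ yields a contraction equation of graph-transform type. The relevant fiber contraction rate is governed by the ratio
\begin{equation*}
\frac{\|Df|_{E^c(x)}\|}{m(Df|_{E^u(x)})} \le \frac{\|Df|_{E^c(x)}\|}{\|Df|_{E^u(x)}\|} =: \theta(x) < 1,
\end{equation*}
while differentiating the invariance equation once more introduces a factor controlled by $\|Df|_{E^c(x)}\|\cdot\|Df|_{E^u(x)}\|^{-1}\cdot\|Df|_{E^u(x)}\|^{-1}$ for the first derivative, and a competition between $\theta(x)$ and $\|Df|_{E^c(x)}\|\,\|Df|_{E^u(x)}\|$ for the second derivative. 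The hypothesis $\inf_x \{\|Df|_{E^c(x)}\|\cdot\|Df|_{E^u(x)}\|\} > 1$ is precisely the $2$-bunching condition ensuring that, after taking a sufficiently high iterate $f^N$, the fiber-contraction constant raised to the appropriate power beats the expansion of the second derivative along the base; then the $C^2$-section theorem applies and gives $E^u \in C^2$. The $C^3$ hypothesis on $f$ is needed so that the graph transform is a $C^2$ operator on the space of $C^2$ sections (one loses one derivative in the bootstrap).

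The main obstacle I expect is bookkeeping the pointwise (non-uniform) nature of the norms: since the eigenvalues of $A$ are irrational and $f$ is only partially — not uniformly — hyperbolic in a normally-hyperbolic sense, the rates $\|Df|_{E^c(x)}\|$ and $\|Df|_{E^u(x)}\|$ vary with $x$, so I cannot use a single constant and must instead work with the subadditive cocycle $\prod_{i=0}^{n-1}\theta(f^i x)$ and its multiplicative analogues, verifying the bunching inequality in the averaged/iterated form
\begin{equation*}
\sup_x \Big( \prod_{i=0}^{N-1}\theta(f^i x) \Big)\cdot\Big( \sup_x \prod_{i=0}^{N-1}\big(\|Df|_{E^c(f^i x)}\|\,\|Df|_{E^u(f^i x)}\|\big)^{-1}\Big) < 1
\end{equation*}
for $N$ large, which follows from compactness of $\mathbb{T}^2$ together with the strict inequalities $\theta(x)<1$ and $\|Df|_{E^c(x)}\|\,\|Df|_{E^u(x)}\| > 1$. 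A secondary technical point is checking that the adapted coordinates (and hence the graph transform) can be chosen $C^2$ in the base point, which uses that $E^c\oplus E^u$ is at least continuous and that one may smooth the frame; this costs nothing in the regularity count since $f$ is $C^3$.
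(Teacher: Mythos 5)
Your core mechanism is the same as the paper's: realize $E^u$ as the graph of a linear map into a complementary (center-like) direction, act on such graphs by the transform induced by $Df$, and apply the Hirsch--Pugh--Shub $C^2$-section theorem, with the hypothesis $\inf_x\|Df|_{E^c(x)}\|\,\|Df|_{E^u(x)}\|>1$ playing the role of the bunching condition. In the paper's bookkeeping the fiber contraction is $k_x\approx\|Df|_{E^c(x)}\|\,\|Df|_{E^u(x)}\|^{-1}$ and the relevant base constant is $\lambda_x=\|D\tilde f(x)^{-1}\|\approx\|Df|_{E^c(x)}\|^{-1}$, so $k_x\lambda_x^2\approx(\|Df|_{E^c(x)}\|\,\|Df|_{E^u(x)}\|)^{-1}<1$ pointwise and no passage to an iterate is needed; note that your displayed ``iterated'' inequality is not the right combination anyway (the section theorem wants $\sup_x k_x\lambda_x^2<1$ evaluated along one orbit, not a product of two separate suprema, and your product $\theta\cdot(\|Df|_{E^c}\|\|Df|_{E^u}\|)^{-1}$ simplifies to $\|Df|_{E^u}\|^{-2}$, which is automatic and unrelated to the hypothesis).

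The genuine gap is your compactification device. You run the graph transform over the inverse limit $\mathbb{T}^2_f$ with the shift, but $\mathbb{T}^2_f$ is a solenoid, locally of the form (disk)$\times$(Cantor set), not a manifold: ``$C^2$-regularity of the section over $\mathbb{T}^2_f$'' is not defined, the $C^r$-section theorem does not apply as stated over such a base, and consequently your descent lemma (``regularity over $\mathbb{T}^2_f$ descends to $\mathbb{T}^2$ because specialness makes the section depend only on $x_0$'') has nothing to descend. The paper's route repairs exactly this point: lift to the universal cover, where $\tilde f$ is an honest diffeomorphism of $\mathbb{R}^2$, all data are $\mathbb{Z}^2$-periodic so the estimates are uniform, and --- this is where ``special'' is really used --- the invariant bundle $\tilde E^u$ is the $\mathbb{Z}^2$-periodic lift of $E^u$, so $C^2$ regularity on $\mathbb{R}^2$ is equivalent to $C^2$ regularity on $\mathbb{T}^2$. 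If you insist on the inverse-limit picture you would have to argue leafwise along the path components of $\mathbb{T}^2_f$, which are planes covering $\mathbb{T}^2$, i.e.\ you are led back to the universal-cover argument; the rest of your outline (adapted smooth approximation of the splitting, $C^3$ on $f$ giving a $C^2$ bundle map, hence a $C^2$ invariant section) then matches the paper.
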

\begin{remark}
The proof of the theorem needs $C^2$ section theorem with fiber contraction $||Df|_{E^c(x)}||\cdot||Df|_{E^u(x)}||^{-1}$ and base contraction $||Df|_{E^c(x)}||$.
\end{remark}
\begin{proof}
Suppose that the diffeomorphism $\tilde{f}:\RR^2\to \RR^2$ is a lift of endomorphism $f:\mathbb{T}^2\to\mathbb{T}^2$ with projection $\pi:\RR^2\to \mathbb{T}^2$.
$\tilde{f}$ has a partially hyperbolic splitting on $\RR^2$, i.e., $T_x\RR^2=\tilde{E}^c(x)\oplus \tilde{E}^u(x)$.
Approximate $T\RR^2=\tilde{E}^c\oplus \tilde{E}^u$ by a $C^{\infty}$ decomposition $T\RR^2=F^c\oplus F^u$. Denote $$D\tilde{f}(x)=\begin{pmatrix}a_{cc}(x)&a_{uc}(x)\\a_{cu}(x)&a_{uu}(x)\end{pmatrix}$$ under the decomposition $T\RR^2=F^c\oplus F^u$. Then $a_{cc}(x)\approx ||D\tilde{f}|_{\tilde{E}^c(x)}||$,
$a_{uu}(x)\approx ||D\tilde{f}|_{\tilde{E}^u(x)}||$, $a_{uc}(x)\approx 0$, $a_{cu}(x)\approx 0$.
Denote by $L(X,Y)$ the linear map from the space $X$ to the space $Y$.
Consider the graph transformation $F:L(F^u,F^c)\to L(F^u,F^c)$, for any $L_x\in L(F^u_{x},F^c_{x})$,
$$ F(L_x)=(a_{cc}(x)L_x+a_{uc}(x))(a_{cu}(x)L_x+a_{uu}(x))^{-1}\in L(F^u_{\tilde{f}x},F^c_{\tilde{f}x}).$$
Denote by $k_x$ the fiber contraction rate, then $k_x\approx||D\tilde{f}|_{\tilde{E}^c(x)}||\cdot||D\tilde{f}|_{\tilde{E}^u(x)}||^{-1}$. On the base, $\lambda_x:=||D\tilde{f}(x)^{-1}||\approx ||D\tilde{f}|_{\tilde{E}^c(x)}||^{-1}$. $E^u$ is $C^2$ if and only if $\tilde{E}^u$ is $C^2$, by the $C^2$ section theorem
\cite{HPS}, which is equivalent to $$\sup_{x\in\RR^2} k_x\lambda_x^2<1,$$ i.e.,$$\inf_{x\in\mathbb{T}^2}\{||Df|_{E^c(x)}||\cdot||Df|_{E^u(x)}||\}>1.$$
\end{proof}

Journe's Theorem describes the global smoothness of functions which are smooth along foliations.

\begin{theorem}\cite{J}(Journe's Theorem)\label{J}
Let $W$ and $V$ be mutually transverse uniformly continuous foliations with $C^r$ leaves on a manifold $M$. If the restriction of $\phi:M\to \RR$ to the leaves of $W$ and $V$ are uniformly $C^r$, then $\phi$ is $C^{r-\epsilon}$ for any $\epsilon>0$.
\end{theorem}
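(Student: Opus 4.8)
\medskip
\noindent\textbf{Proof proposal.}
The statement is local, so the plan is to first reduce to a normalized chart: work in $M=\RR^{d_1}\times\RR^{d_2}$ with the leaf of $W$ through the origin tangent to $\RR^{d_1}\times\{0\}$ and the leaf of $V$ through the origin tangent to $\{0\}\times\RR^{d_2}$. Transversality together with the uniform $C^r$ regularity of the leaves then furnishes, in a fixed small box, a local product structure: nearby leaves $W(x)$ and $V(y)$ meet at a single point $w=w(x,y)$, each $W$-leaf is a uniformly $C^r$ graph over the first factor and each $V$-leaf over the second, and $d(x,w)$, $d(w,y)$ are comparable to the corresponding coordinate differences. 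I would keep in mind throughout that the foliations are only uniformly continuous, so the homeomorphisms that rectify $W$ or $V$ need not be smooth; one cannot straighten a foliation and reduce to the axis-parallel case, and all the regularity has to be extracted from the leaves themselves.

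The second ingredient is the classical Whitney--Campanato characterization of H\"older regularity. Writing $r=k+\beta$ with $k\in\ZZ_{\ge0}$ and $\beta\in(0,1]$, a bounded function $\psi$ on a box lies in $C^{k+\beta}$, with uniform norm, if and only if there is a constant $C$ and, for every base point $p$, a polynomial $P_p$ of degree $\le k$ with $|\psi(z)-P_p(z)|\le C|z-p|^{k+\beta}$ for all $z$ in the box; the coefficients of $P_p$ are then the derivatives of $\psi$ at $p$. The leafwise hypotheses hand us exactly such approximations in the intrinsic coordinate of each $W$-leaf and each $V$-leaf, and the task is to assemble them into a genuine polynomial approximation in all $n$ variables.

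I would then induct on $k=\lfloor r\rfloor$. The base case $k=0$ is a pure H\"older estimate: for nearby $x,y$ set $w=W(x)\cap V(y)$, bound both legs of the path $x\to w\to y$ by $\lesssim|x-y|$ via the local product structure, and apply the leafwise H\"older bounds along $W(x)$ and $V(y)$ to get $|\psi(x)-\psi(y)|\lesssim|x-y|^\beta$. For the inductive step, fix $p$ and, for $z$ near $p$, use the two-step staircase $p\to w(z)\to z$ with $w(z)=W(p)\cap V(z)$: compose the leafwise Taylor polynomial of $\psi$ along $W(p)$ at $p$, evaluated at $w(z)$, with the leafwise Taylor polynomial of $\psi$ along $V(w(z))$ at $w(z)$, evaluated at $z$. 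This yields a candidate whose coefficients depend on the base point $w(z)$, hence on $z$; to replace it by an honest polynomial in $z$ I would expand those coefficient functions along $W(p)$, and the regularity needed is supplied by the inductive hypothesis applied to the lower-order leafwise derivatives of $\psi$ — which one first shows are $C^{k-1+\beta}$ along both foliations, using the uniform $C^r$ regularity of the leaves so that the two families of leafwise differential operators interact in a controlled way. Running this on a multiscale grid of $W$- and $V$-plaques and carefully tracking how the errors accumulate over the (boundedly many) staircase steps gives $|\psi(z)-P_p(z)|\le C|z-p|^{r}$ up to an arbitrarily small loss in the exponent, hence $\psi\in C^{r-\epsilon}$ by the second ingredient.

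The hard part will be the ``hinge'' estimate inside the inductive step. Because $W$ and $V$ are merely continuous foliations, the way a $W$-plaque and a $V$-plaque fit together along the staircase, and the way the leafwise jets of $\psi$ vary with the base leaf, must be controlled directly rather than by a smooth change of coordinates; quantifying this dependence — and thereby showing that the composed ``staircase polynomial'' stays within $|z-p|^{r}$ (or $|z-p|^{r-\epsilon}$) of $\psi$ uniformly in $p$ — is the technical core. The localization, the Whitney--Campanato criterion, and the base case are all routine by comparison.
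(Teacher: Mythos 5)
The paper does not prove this statement at all: it is quoted verbatim from Journ\'e's paper \cite{J}, so the only fair comparison is with Journ\'e's own argument. Your overall skeleton agrees with his: localize to a small box where transversality and the uniform $C^r$ bounds on the leaves give a local product structure, and then verify the Campanato/Whitney-type criterion that a function admitting, at every base point $p$, a degree-$k$ polynomial approximation with error $O(|z-p|^{k+\beta})$ is $C^{k+\beta}$ (with the usual caveat at $\beta=1$, which is where the $\epsilon$-loss in the statement comes from). The base case and the comparability estimates $d(x,w)+d(w,y)\lesssim d(x,y)$ are fine.

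The genuine gap is in your inductive step, and it is not a technicality. Your candidate polynomial is built by composing the leafwise Taylor polynomial of $\phi$ along $W(p)$ with the leafwise Taylor polynomial of $\phi$ along $V(w(z))$; its coefficients are the $V$-leafwise derivatives of $\phi$ evaluated at the moving foot point $w(z)$, and to convert this into an honest polynomial in $z$ you must expand those coefficient functions along $W(p)$. That requires knowing that the $V$-leafwise derivatives of $\phi$ are (H\"older) regular \emph{along $W$-leaves}. This is not among the hypotheses -- $\phi$ is only assumed regular along each family of leaves separately -- and you cannot get it from the inductive hypothesis, because Journ\'e's theorem at lower order applies only to functions already known to be regular along \emph{both} foliations; the transverse regularity of $\partial_V\phi$ is essentially the mixed regularity the theorem is meant to produce, so the step as written is circular. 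Journ\'e's proof is designed precisely to avoid this: he never differentiates leafwise jets transversally, but instead constructs the approximating polynomial by interpolation of the \emph{values} of $\phi$ at a grid of intersection points of finitely many $W$-plaques and $V$-plaques near $p$, and controls the interpolation error using only the one-dimensional leafwise $C^{k+\beta}$ bounds along each plaque together with quantitative separation of the nodes. To repair your argument you would either have to switch to such an interpolation scheme for the hinge, or give an independent proof that $\partial_V\phi$ is H\"older along $W$ -- which is essentially as hard as the theorem itself.
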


\begin{corollary}\label{Jc}
Let $W_i$ and $V_i$ be mutually transverse uniformly continuous foliations with $C^r$ leaves on a complete manifold $M_i$, $i=1,2$. If $h$ sends $W_1$ to $W_2$ and $V_1$ to $V_2$, the restrictions of $h$ to the leaves of $W_1,V_1$ and their inverse are uniformly $C^r$, then $h$ is $C^{r-\epsilon}$ diffeomorphism for any $\epsilon>0$.
\end{corollary}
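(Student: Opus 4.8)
The plan is to reduce the statement to the scalar version of Journe's Theorem (Theorem \ref{J}) by reading $h$ in local coordinates. First I would fix $p\in M_1$, a $C^\infty$ chart $(U,\varphi)$ of $M_1$ around $p$, and a $C^\infty$ chart $(\Omega,\psi)=(\Omega,(\psi_1,\dots,\psi_n))$ of $M_2$ around $h(p)$ with $h(U)\subset\Omega$. Since being $C^{r-\epsilon}$ is a local property and the chart transitions are smooth, it then suffices to prove that each coordinate function $h_j:=\psi_j\circ h:U\to\RR$ is $C^{r-\epsilon}$ for every $\epsilon>0$. The foliations $W_1,V_1$ restricted to $U$ are still mutually transverse, uniformly continuous, and have $C^r$ leaves, so Theorem \ref{J} applies to $h_j$ once its hypothesis is checked.

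Next I would verify that $h_j$ is uniformly $C^r$ along the leaves of $W_1$. Since $h$ maps each leaf $L$ of $W_1$ onto a leaf $L'=h(L)$ of $W_2$, and by hypothesis $h|_L:L\to L'$ is uniformly $C^r$, the restriction $h_j|_L=\psi_j|_{L'}\circ(h|_L)$ is the composition of the uniformly $C^r$ map $h|_L$ with the restriction to the $C^r$ leaf $L'$ of the ambient $C^\infty$ function $\psi_j$; hence $h_j|_L$ is uniformly $C^r$, with bounds independent of $L$. Running the same argument with $V_1$ and $V_2$ shows $h_j$ is uniformly $C^r$ along the leaves of $V_1$ as well. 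Journe's Theorem then yields $h_j\in C^{r-\epsilon}$ for all $\epsilon>0$, and therefore $h\in C^{r-\epsilon}$.

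Finally, the hypotheses are symmetric in the two manifolds: $h^{-1}$ sends $W_2$ to $W_1$ and $V_2$ to $V_1$, and its restrictions to leaves are uniformly $C^r$, so the identical argument gives $h^{-1}\in C^{r-\epsilon}$. Thus $h$ is a bijection that is $C^{r-\epsilon}$ with $C^{r-\epsilon}$ inverse, i.e.\ a $C^{r-\epsilon}$ diffeomorphism (differentiating $h^{-1}\circ h=\mathrm{id}$ shows $Dh$ is invertible at every point), which is the assertion.

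The step I expect to require the most care is the adjective \emph{uniformly}: one must make sure the uniform $C^r$ estimates along the leaves of $W_i,V_i$ survive composition with the fixed smooth charts and with the embeddings of the leaves of $W_2,V_2$ into $M_2$. This is where the completeness/bounded-geometry hypothesis on $M_i$ enters, and it is harmless in the situations of interest, where $M_1,M_2$ are $\RR^2$ or $\TT^2$ and one may use translates of a single chart with uniformly bounded derivatives; beyond this bookkeeping I anticipate no real obstacle.
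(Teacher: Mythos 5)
Your argument is correct and is exactly the intended derivation: the paper states Corollary \ref{Jc} without proof as an immediate consequence of Theorem \ref{J}, and your componentwise reduction (apply Journe's Theorem to each coordinate function $\psi_j\circ h$ in fixed smooth charts, then repeat for $h^{-1}$ and conclude invertibility of $Dh$ from $h^{-1}\circ h=\mathrm{id}$) is the standard way to fill in that step. Your closing caveat about preserving the uniform leafwise $C^r$ bounds under composition with charts is the right point to watch, and it is indeed harmless in the paper's setting where $M_i$ is $\RR^2$ or $\TT^2$.
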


\section{Proof of main theorem}
\subsection{Topological conjugacy}
Assume that $A$ is an expanding linear endomorphism with irrational eigenvalue $|\lambda^u_A|>|\lambda^c_A|>1$.
Also assume that $f$ is a $C^r(r\geq 3)$ specially partial hyperbolic endomorphism which is homotopic to $A$.
\begin{proposition}
Under the assumption of Theorem \ref{main}, if the semi-conjugacy $h$ is a topological conjugacy, then $f$ is area-expanding.
\end{proposition}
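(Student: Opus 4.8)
The plan is to argue by contradiction: I will assume that $h$ is a homeomorphism but that $f$ is \emph{not} area expanding, i.e.\ that (\ref{*}) fails, and derive a contradiction. The strategy is to produce two distinct points on a single center leaf of $f$ whose forward orbits, lifted to $\RR^2$, remain at bounded distance from one another, and then to transport this picture through $\tilde h$ and contradict the uniform expansion of $A$ along each of its invariant linear foliations.

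\textbf{Step 1 (a bad invariant measure).} Set $g:=\log|\det Df|$, a continuous function on $\TT^2$ since $f$ is a $C^r$ local diffeomorphism. The numbers $a_n:=\min_{x\in\TT^2}\sum_{i=0}^{n-1}g(f^ix)$ satisfy $a_{m+n}\geq a_m+a_n$, so $a_n/n$ converges to $\ell:=\sup_n a_n/n$; iterating a time-$k$ estimate shows that $\ell>0$ already forces (\ref{*}), so the failure of (\ref{*}) gives $\ell\leq 0$. Taking $x_n$ that realize $a_n$ and a weak-$*$ limit $\mu$ of the empirical measures $\tfrac1n\sum_{i=0}^{n-1}\delta_{f^ix_n}$ produces an $f$-invariant probability measure with $\int g\,d\mu=\ell\leq 0$, and passing to an ergodic component I may assume $\mu$ ergodic. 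Writing $g(x)=\log\|Df|_{E^c_x}\|+\log\|Df|_{E^u_x}\|+\psi(fx)-\psi(x)$, with $\psi=\log|\sin\angle(E^c,E^u)|$ continuous (the two bundles are everywhere transverse on the compact torus), invariance of $\mu$ yields $\int g\,d\mu=\lambda^c(\mu)+\lambda^u(\mu)$. Since $\|Df|_{E^u_x}\|>1$ for every $x$, compactness gives $c_u:=\min_x\|Df|_{E^u_x}\|>1$, so $\lambda^u(\mu)\geq\log c_u>0$ and therefore $\lambda^c(\mu)<0$.

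\textbf{Step 2 (a forward-contracted center arc).} The foliation $\mathscr F^c$ is one-dimensional with $C^1$ leaves and is $f$-invariant, and along $\mu$-typical orbits the derivative of $f$ along $\mathscr F^c$ has negative Lyapunov exponent $\lambda^c(\mu)$. By the Pliss lemma and Birkhoff's theorem there is a set of positive $\mu$-measure of points $x$ with $\|Df^n|_{E^c_x}\|\leq e^{\lambda^c(\mu)n/2}$ for all $n\geq0$; for such an $x$, an inductive distortion estimate along the leaf shows that every sufficiently short sub-arc $[x,y]\subset\mathscr F^c(x)$ with $y\neq x$ satisfies $\mathrm{length}(f^n[x,y])\to0$. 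Lifting $x$ and $y$ to $\tilde x\neq\tilde y$ on a common leaf of $\tilde{\mathscr F}^c$, one gets $d(\tilde f^n\tilde x,\tilde f^n\tilde y)\leq \mathrm{length}(\tilde f^n[\tilde x,\tilde y])=\mathrm{length}(f^n[x,y])\to0$, so this distance stays bounded.

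\textbf{Step 3 (the contradiction).} As $h$ is a homeomorphism, $\tilde h$ is a homeomorphism of $\RR^2$ and $\tilde h-\mathrm{id}$ is $\ZZ^2$-periodic, hence bounded. By item 2 of Proposition~\ref{bounded distance} together with the discussion after Lemma~\ref{P}, $\tilde h$ carries $\tilde{\mathscr F}^c$ onto a linear $A$-invariant foliation $\tilde L^c$ which is $\tilde{\mathscr A}^c$ or $\tilde{\mathscr A}^u$ — in either case a foliation by translates of an eigenline $E^\sigma_A$ of $A$ whose eigenvalue has modulus $>1$. Hence $\tilde h\tilde x\neq\tilde h\tilde y$ lie on one leaf of $\tilde L^c$, so $w:=\tilde h\tilde x-\tilde h\tilde y\neq0$ belongs to $E^\sigma_A$ and $|A^nw|=|\lambda^\sigma_A|^n|w|\to\infty$. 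On the other hand, from $\tilde h\tilde f^n=A^n\tilde h$,
\[
|A^nw|=|\tilde h\tilde f^n\tilde x-\tilde h\tilde f^n\tilde y|\leq|\tilde f^n\tilde x-\tilde f^n\tilde y|+2\|\tilde h-\mathrm{id}\|_{C^0},
\]
which is bounded by Step 2 — a contradiction. Thus $f$ must be area expanding. I expect the main difficulty to be Step 2: converting the mere negativity of the averaged center exponent into an honest pair of distinct points on one center leaf whose forward orbits do not separate; this is exactly where non-uniformly hyperbolic (Pliss/Pesin) theory for the continuous invariant bundle $E^c$ enters, and it applies here because the contraction is detected in forward time, so the non-invertibility of $f$ causes no trouble. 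By contrast, Step 1 (a Fekete/empirical-measure argument) and Step 3 (a bounded-distance transport) are soft.
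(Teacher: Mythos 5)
Your argument is correct, but it reaches the key center contraction by a genuinely different route than the paper. The paper argues elementarily: if (\ref{*}) fails, a compactness/covering argument produces a single point $x$ with $|\det Df^n(x)|\le 1$ for \emph{every} $n$; the same determinant--angle identity you use then yields the uniform bound $\|Df^n|_{E^c(x)}\|\le C\mu^{-n}$ at that point, after which the paper runs essentially your Step 2 induction on a short center arc and gets the contradiction on the torus itself, from continuity of the conjugacy and the (local) expansion of $A$, since $d(A^{km}hx,A^{km}hy)\to 0$ with $hx\neq hy$ is impossible. You instead extract, via Fekete subadditivity and empirical measures, an ergodic invariant measure with $\int\log|\det Df|\,d\mu\le 0$, convert it into $\lambda^c(\mu)<0$ through the same cocycle identity, and use Pliss-type arguments to find a positive-measure set of points with uniform forward center contraction; your final contradiction is taken on $\RR^2$, where boundedness of $\tilde h-\mathrm{id}$, injectivity of $\tilde h$, and the fact that $A$ expands every nonzero vector finish immediately --- arguably cleaner than the paper's on-torus step. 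Your route costs nonuniform-hyperbolicity machinery where the paper needs only compactness, but it buys extra information (an invariant measure with negative center exponent) and a softer endgame. Two minor remarks: the appeal in Step 3 to $\tilde h$ sending $\tilde{\mathscr F}^c$ to a linear $A$-invariant foliation is superfluous, since both eigenvalues of $A$ have modulus greater than $1$ and hence $|A^nw|\to\infty$ for every $w\ne 0$ (though that fact is legitimately available here, as it uses only Lemma \ref{P} and Proposition \ref{bounded distance}, not the $C^1$ hypothesis on $E^c$); and in Step 2 the cleanest justification that the set of points with $\|Df^n|_{E^c_x}\|\le e^{\lambda^c(\mu)n/2}$ for all $n\ge 0$ has positive measure is the maximal ergodic theorem applied to $\log\|Df|_{E^c}\|-\lambda^c(\mu)/2$, since the standard Pliss lemma produces backward-looking hyperbolic times and needs a small extra twist to give contraction forward from a fixed point.
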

\begin{proof}
Suppose that $f$ is not area-expanding, then we can claim that there is a $x\in\TT^2$ such that for all $n\in\NN$ we have $|\det(Df^n(x))|\leq 1$, which will be proved in the next paragraph.
Taking the determinant of the equality $$Df^n(x)(e_{E^c_f(x)}e_{E^u_f(x)})=(e_{E^c_f(f^nx)}e_{E^u_f(f^nx)}){\rm diag}\{||Df^n|_{E^c_f(x)}||,||Df^n|_{E^u_f(x)}||\},$$
we can get that
$$||Df^n|_{E^c_f(x)}||\cdot||Df^n|_{E^u_f(x)}||=|\det(Df^n(x))|\frac{\sin\angle(E^c_f(x),E^u_f(x))}{\sin\angle (E^c_f(f^nx),E^u_f(f^nx))}\leq \frac{\sin\angle(E^c_f(x),E^u_f(x))}{\sin\beta}=C,$$
where $\inf_{x\in\mathbb{T}^2}\angle(E^c_f(x),E^u_f(x))=\beta$. Suppose that $\inf_{y\in\TT^2}||Df|_{E^u_y}||\geq\mu>1$, then $$||Df^n|_{E^c_f(x)}||\leq C\mu^{-n}.$$
There is $k\in\NN$ such that $C\mu^{-k}=\lambda\in(0,1)$. Choose $\gamma\in(\lambda, 1)$. There is $\delta>0$ such that for any $y,z\in\TT^2$ with $d(y,z)\leq\delta$ we have $$\frac{\lambda}{\gamma}||Df^k|_{E^c_f(y)}||<||Df^k|_{E^c_f(z)}||<\frac{\gamma}{\lambda}||Df^k|_{E^c_f(y)}||.$$
By induction, for any $y\in\mathscr{F}^c(x)$ with $d_{\mathscr{F}^c}(y,x)<\delta$, any $m\in\NN$, we have
$$d_{\mathscr{F}^c}(f^{km}y,f^{km}x)<C\mu^{-km}\left(\frac{\gamma}{\lambda}\right)^{m}\cdot \delta\leq \gamma^m\delta\to 0\quad (m\to+\infty).$$
Thus $d(f^{km}y,f^{km}x)\to 0\quad (m\to+\infty)$. Since $h$ is continuous,
$$d(A^{km}hy,A^{km}hx)=d(hf^{km}y,hf^{km}x)\to 0\quad (m\to+\infty).$$
It is a contradiction with the expanding property of $A$.\\
\indent Suppose that for any $x\in\TT^2$, there is $n_x\in\NN$, s.t., $|\det(Df^{n_x}(x))|>1$, we will prove that $f$ is area-expanding. There is $\delta_x>0$, $d_x>0$, such that any $y\in B(x,\delta_x)$, $|\det(Df^{n_x}(y))|\geq d_x>1$. Since $\TT^2$ is compact, there are $x_1,\dots, x_m$ such that $\{B(x_i,\delta_{x_i})\}_{i=1}^m$ cover $\TT^2$. Let $N=\max_{1\leq i\leq m}n_{x_i}$, and $\min_{1\leq i\leq m}d_x=\rho^{N}, \rho>1$. For any $z\in\TT^2$, any $n\in\NN$, there is $n_1,\dots, n_k\in [1, N]$ such that $n-n_1-\cdots-n_k<N$, and
$$|\det(Df^{n_{i+1}}(f^{n_1+\cdots+n_i}z))|>\rho^N, 0\leq i\leq k-1.$$
Then $$|\det(Df^n(z))|\geq \rho^{Nk}\cdot\inf_{0\leq j\leq N, w\in\TT^2}|\det(Df^j(w))|>\rho^n\cdot\rho{-N}\inf_{0\leq j\leq N, w\in\TT^2}|\det(Df^j(w))|.$$
\end{proof}

\begin{proposition}\label{small neighborhood}
Under the assumption of Theorem \ref{main}, if $f$ is area-expanding, then $E^u_f$ is $C^2$.
\end{proposition}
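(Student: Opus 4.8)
The plan is to deduce this from Theorem~\ref{bundle smoothness}, whose hypothesis is the \emph{pointwise} domination $\inf_x\{\|Df|_{E^c(x)}\|\cdot\|Df|_{E^u(x)}\|\}>1$; area expansion only gives this fact along orbits, so the idea is to pass to a high iterate $f^k$ of $f$, for which the orbit‑averaged bound becomes a genuine pointwise one. First I would record, exactly as in the proof of the previous proposition, the identity obtained by taking determinants of
\[
Df^n(x)(e_{E^c_f(x)}\,e_{E^u_f(x)})=(e_{E^c_f(f^nx)}\,e_{E^u_f(f^nx)})\,{\rm diag}\{\|Df^n|_{E^c_f(x)}\|,\ \|Df^n|_{E^u_f(x)}\|\},
\]
namely
\[
\|Df^n|_{E^c_f(x)}\|\cdot\|Df^n|_{E^u_f(x)}\|=|\det(Df^n(x))|\cdot\frac{\sin\angle(E^c_f(x),E^u_f(x))}{\sin\angle(E^c_f(f^nx),E^u_f(f^nx))}.
\]
Since $T\TT^2=E^c_f\oplus E^u_f$ is a continuous (indeed dominated) splitting over the compact torus, $\delta_0:=\inf_{x\in\TT^2}\sin\angle(E^c_f(x),E^u_f(x))>0$, and $\sin\angle(\cdot)\le 1$ everywhere, so the area‑expanding bound \eqref{*} yields constants $C>0$, $\rho>1$ with $\|Df^n|_{E^c_f(x)}\|\cdot\|Df^n|_{E^u_f(x)}\|\ge C\delta_0\rho^n$ for all $x\in\TT^2$ and all $n\in\NN$.

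Next I would fix $k\in\NN$ with $C\delta_0\rho^k>1$ and apply Theorem~\ref{bundle smoothness} to $g:=f^k$. This needs two routine verifications. First, $g$ is again a $C^r$ (hence $C^3$) specially partially hyperbolic endomorphism: the unstable cone field $\mathscr C$ is $Dg$‑invariant and uniformly expanding for $g$, and the invariant bundles are unchanged, $E^c_g=E^c_f$ and $E^u_g=E^u_f$ — for the unstable bundle because $\bigcap_{m\ge 0}Df^m(x_{-m})\mathscr C_{x_{-m}}$ is a nested decreasing intersection of cones, so restricting to the subsequence $m\in k\NN$ (i.e.\ to backward $g$‑orbits) does not change it and it remains independent of the backward orbit, and for the center bundle because $E^c_x=\bigcap_{n\ge0}(Df^n(x))^{-1}(\mathscr C_{f^nx})^c$ is likewise unchanged under taking $k$‑th powers. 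Consequently $\|Dg|_{E^c_g(x)}\|=\|Df^k|_{E^c_f(x)}\|$ and $\|Dg|_{E^u_g(x)}\|=\|Df^k|_{E^u_f(x)}\|$, so the first paragraph gives $\inf_{x\in\TT^2}\{\|Dg|_{E^c_g(x)}\|\cdot\|Dg|_{E^u_g(x)}\|\}\ge C\delta_0\rho^k>1$. Theorem~\ref{bundle smoothness} then says $E^u_g$ is $C^2$, and since $E^u_g=E^u_f$, this is exactly the assertion.

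The argument is short, and the only point I expect to require a little care is the bookkeeping in the second paragraph, i.e.\ confirming that the $k$‑th iterate genuinely inherits the special partially hyperbolic structure with \emph{exactly} the same center and unstable bundles, so that the $C^2$ conclusion for $g$ transfers verbatim to $f$. An alternative that sidesteps iterating is to absorb the deficit into an adapted norm on $E^c$ via the bounded function $u(x)=\sup_{n\ge0}\big(n\log\rho-\sum_{i=0}^{n-1}\log(\|Df|_{E^c(f^ix)}\|\,\|Df|_{E^u(f^ix)}\|)\big)$, which satisfies $\log(\|Df|_{E^c(x)}\|\,\|Df|_{E^u(x)}\|)\ge\log\rho+u(fx)-u(x)$; but this introduces a non‑smooth metric, so the passage to $f^k$ — which keeps the ambient metric smooth and appeals to Theorem~\ref{bundle smoothness} as a black box — is the cleaner route.
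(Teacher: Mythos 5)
Your proposal is correct and follows essentially the same route as the paper: the same determinant--angle identity for $Df^n$, the same lower bound using the uniform transversality of $E^c_f\oplus E^u_f$, and then an application of Theorem~\ref{bundle smoothness} to a sufficiently high iterate. The extra bookkeeping you flag (that $f^k$ is again specially partially hyperbolic with the same bundles $E^c$, $E^u$) is left implicit in the paper but is exactly the right point to check.
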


\begin{proof}
Suppose that $$|\det(Df^n(x))|=C\rho^n, \rho>1 \quad \text{and}\quad\inf_{x\in\mathbb{T}^2}\angle(E^c_f(x),E^u_f(x))=\beta.$$ There is an integer $n$ such that $C\rho^n\sin\beta> 1$.
Note that $Df^n(x)E_f^u(x)=E_f^u(f^nx)$ for any $x\in \mathbb{T}^2$.
\begin{equation*}
\begin{split}
||Df^n|_{E^c_f(x)}||\cdot||Df^n|_{E^u_f(x)}||&=|\det(Df^n(x))|\frac{\sin\angle(E^c_f(x),E^u_f(x))}{\sin\angle (E^c_f(f^nx),E^u_f(f^nx))}\\
&\geq |\det(Df^n(x))|\sin\beta\\
&\geq C\rho^n\sin\beta> 1
\end{split}
\end{equation*}
Using Theorem \ref{bundle smoothness}, it implies that $E^u_f$ is $C^2$.\\
\end{proof}

If $E^u$ is $C^2$, then the unstable foliation induces a $C^2$ Poincare return map.
Every orientation preserving homeomorphism on $S^1$ with an irrational rotation number is semi-conjugate to the rotation on $S^1$ with the same rotation number. Denjoy theorem guarantees that this semi-conjugacy is, in fact, a conjugacy. Then we can prove that the semi-conjugacy between $f$ and $A$ is invertible.

\begin{proposition}\label{invertible}
Under the assumption of Theorem \ref{main}, if $f$ is area expanding, then the semi-conjugacy $h$ is a topological conjugacy.
\end{proposition}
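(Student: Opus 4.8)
The plan is to show that the continuous surjection $h\colon\TT^2\to\TT^2$ is injective; since $\TT^2$ is compact and Hausdorff this upgrades $h$ to a homeomorphism, and hence (as $hf=Ah$) to a topological conjugacy between $f$ and $A$. I would work with the lift $\tilde{h}\colon\RR^2\to\RR^2$, which satisfies $\tilde{h}\tilde{f}=A\tilde{h}$, $\tilde{h}(x+n)=\tilde{h}(x)+n$ and $\|\tilde{h}-\mathrm{id}\|=:K<\infty$, and I would use the following facts already established: $\tilde{h}\tilde{\mathscr{F}}^{\sigma}(x)=\tilde{L}^{\sigma}(\tilde{h}x)$ for $\sigma=u,c$, where $\tilde{L}^{u}\neq\tilde{L}^{c}$ are the two eigenline foliations of $A$ (both of irrational slope); $\tilde{\mathscr{F}}^{u}$ and $\tilde{\mathscr{F}}^{c}$ have a Global Product Structure; both $\tilde{\mathscr{F}}^{u}$ and $\tilde{\mathscr{F}}^{c}$ are quasi-isometric; $\mathscr{F}^{u}$ has no Reeb component, so it is the suspension of a fixed-point-free circle homeomorphism; and, by Proposition \ref{small neighborhood}, the area-expanding hypothesis forces $E^{u}_{f}$ to be $C^{2}$, so $\mathscr{F}^{u}$ has $C^{2}$ holonomy.

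\emph{Step 1 (transverse Denjoy).} Fix an essential closed curve $T\subset\TT^2$ transverse to $\mathscr{F}^{u}$ and meeting every leaf, and let $g\colon T\to T$ be the Poincar\'e return map of $\mathscr{F}^{u}$ along $T$. Then $g$ is a $C^{2}$ circle diffeomorphism, and since every leaf of $\tilde{\mathscr{F}}^{u}$ stays a bounded distance from a line of irrational slope, $g$ has no periodic orbit, hence irrational rotation number; by Denjoy's theorem $g$ has no wandering interval. Because $\tilde{h}$ maps $\tilde{\mathscr{F}}^{u}$-leaves onto $\tilde{L}^{u}$-leaves, is $\ZZ^{2}$-equivariant, and is homotopic to the identity, projecting along the direction of $\tilde{L}^{u}$ produces a monotone degree-one circle map $\bar{h}\colon T\to T$ with $\bar{h}\,g=R\,\bar{h}$, where $R$ is the rigid irrational rotation that is the return map of the linear foliation $\tilde{L}^{u}$ along $T$. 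If $\bar{h}$ collapsed an arc $J$ to a point $c$, then it would collapse each $g^{n}(J)$ to $R^{n}c$; since $g$ has no wandering interval some two of the arcs $g^{n}(J)$, with distinct indices $m\neq n$, meet, forcing $R^{m}c=R^{n}c$ and therefore $m=n$ (as $R$ is an irrational rotation), a contradiction. Hence $\bar{h}$ is strictly monotone, i.e.\ a homeomorphism.

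\emph{Step 2 (fibres are points).} First, $\tilde{h}$ is injective on every unstable leaf: if $x\neq y$ lie on one and $\tilde{h}x=\tilde{h}y$, then $d_{\mathscr{F}^{u}}(\tilde{f}^{n}x,\tilde{f}^{n}y)\to\infty$ by unstable expansion, hence $d(\tilde{f}^{n}x,\tilde{f}^{n}y)\to\infty$ by quasi-isometry, while $\tilde{h}\tilde{f}^{n}x=A^{n}\tilde{h}x=A^{n}\tilde{h}y=\tilde{h}\tilde{f}^{n}y$ together with $\|\tilde{h}-\mathrm{id}\|=K$ bounds $d(\tilde{f}^{n}x,\tilde{f}^{n}y)$ by $2K$ --- impossible. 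Now fix $a\in\RR^{2}$ and set $C=\tilde{h}^{-1}(a)$; it is closed and contained in $\bar B(a,K)$, hence compact. If $p,q\in C$, put $z=\tilde{\mathscr{F}}^{u}(p)\cap\tilde{\mathscr{F}}^{c}(q)$ (Global Product Structure); then $\tilde{h}(z)\in\tilde{L}^{u}(a)\cap\tilde{L}^{c}(a)=\{a\}$, so $z\in C$, and injectivity along $\tilde{\mathscr{F}}^{u}(p)$ gives $z=p$, whence $q\in\tilde{\mathscr{F}}^{c}(p)$; thus $C$ lies in a single center leaf $L$. Next, $C$ is connected in $L$: otherwise there is a subinterval $[s,t]\subset L$ with endpoints in $C$ and $\tilde{h}([s,t])$ a nondegenerate interval of $\tilde{L}^{c}(a)$; iterating, $\tilde{h}(\tilde{f}^{n}[s,t])=A^{n}\tilde{h}([s,t])$ has length $\to\infty$ along $\tilde{L}^{c}$ (as $A$ expands that direction), whereas $\tilde{f}^{n}s,\tilde{f}^{n}t\in\tilde{h}^{-1}(A^{n}a)\subset\bar B(A^{n}a,K)$, so $d(\tilde{f}^{n}s,\tilde{f}^{n}t)\leq 2K$, hence by quasi-isometry of $\tilde{\mathscr{F}}^{c}$ the sub-arc $\tilde{f}^{n}[s,t]$ has intrinsic length, and so Euclidean diameter, bounded independently of $n$ --- contradicting that $\tilde{h}$, moving points by at most $K$, sends it onto an arc of length $\to\infty$. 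So $C$ is a (possibly degenerate) center arc. Finally, if $C$ were nondegenerate, the unstable leaves through $C$ would sweep out an open flow box whose projection to $T$ along $\mathscr{F}^{u}$ is a nondegenerate arc on which $\bar{h}$ is constant (each such unstable leaf being sent by $\tilde{h}$ into $\tilde{L}^{u}(a)$, since $\tilde{h}\equiv a$ on $C$), contradicting Step 1. Therefore every fibre of $\tilde{h}$ is a single point; $\tilde{h}$, and hence $h$, is injective, so $h$ is a topological conjugacy.

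I expect Step 1 to be the real obstacle. One must choose the transversal $T$ honestly, show that the induced map $\bar{h}$ is a genuinely well-defined, monotone, degree-one circle map --- which uses the $\ZZ^{2}$-equivariance of $\tilde{h}$ together with the fact that, being homotopic to the identity, it preserves the transverse orientation of $\mathscr{F}^{u}$ --- and verify the intertwining relation $\bar{h}\,g=R\,\bar{h}$ between the two return maps, after which the $C^{r}$ hypothesis enters precisely through Denjoy's no-wandering-interval theorem. The remaining steps are either soft topology (compactness, Global Product Structure, the suspension picture of $\mathscr{F}^{u}$) or the quasi-isometry estimates carried out above.
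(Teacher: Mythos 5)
Your proposal is correct in substance, and it reaches the conclusion by a noticeably different endgame than the paper, while using the same basic inputs (area expansion $\Rightarrow$ $E^u$ is $C^2$ by Proposition \ref{small neighborhood}, hence a $C^2$ return map with irrational rotation number and Denjoy; quasi-isometry plus unstable expansion $\Rightarrow$ $\tilde h$ is injective on unstable leaves; $\tilde h\tilde{\mathscr{F}}^{\sigma}=\tilde L^{\sigma}$ with $\tilde L^c\neq\tilde L^u$). The paper never constructs an induced circle map: it sets $\Lambda=\overline{\{x: h^{-1}hx=\{x\}\}}$, shows via the global product structure and unstable-leaf injectivity that non-injectivity propagates along whole unstable leaves, and then uses Denjoy in the form ``the return map is conjugate to an irrational rotation, so $\mathscr{F}^u$ is minimal'' to force $\Lambda=\TT^2$. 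You instead first prove that every fiber of $\tilde h$ is a compact sub-arc of a single center leaf (your Step 2(b)--(c), which in effect re-derives Proposition \ref{h^{-1}} without the $C^1$ center hypothesis, using quasi-isometry of $\tilde{\mathscr{F}}^c$ and expansion of $A$), and then rule out a nondegenerate fiber because it would make the induced map $\bar h$ on a transversal constant on an arc, contradicting the no-wandering-interval form of Denjoy together with the absence of periodic points for the linear holonomy. What your route buys is an explicit description of the fibers and a argument that does not need minimality as such; what it costs is exactly the point you flag: one must choose $T$ so that each leaf of $\tilde{\mathscr{F}}^u$ and each line of $\tilde L^u$ meets the lift $\tilde T$ exactly once (e.g.\ a linear circle transverse to the unstable cone), define $\bar h$ equivariantly on $\tilde T$, and check the intertwining $\bar h\,g=R\,\bar h$ with $R$ the linear holonomy in the same return class as $g$ (for a general smooth transversal $R$ is only topologically an irrational rotation, not a rigid one, but it has no periodic points, which is all your collapsing argument uses); note also that monotonicity and degree one of $\bar h$, which you assert, are not actually needed --- non-collapsing suffices. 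The paper's saturation-plus-minimality argument sidesteps this bookkeeping entirely.
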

\begin{proof}
Denote $\Lambda=\overline{\{x:h^{-1}hx=\{x\}\}}$. We claim that if $y\in \Lambda$ then $\mathscr{F}^u(y)\subset \Lambda$, and if $y\notin \Lambda$ then $\mathscr{F}^u(y)\cap \Lambda =\emptyset$.
Before proving the claim, we first finish the proof of the proposition.
Choose a smooth simple closed curve $C$ in $\mathbb{T}^2$ which is transverse to $\mathscr{F}^u$. Consider the Poincare map $R:C\to C$ which is induced by $\mathscr{F}^u$. Since $E^u_f$ is $C^2$ and each $\tilde{\mathscr{F}}^u$ has a bounded distance from an irrational line, $R$ is a $C^2$ circle diffeomorphism without periodic point. Therefore its rotation number is irrational. By Denjoy's theorem, $R$ is conjugate to an irrational rotation, which implies that $\omega(a)=C$ for all $a\in C$. This leads to the conclusion that $\mathscr{F}^u$ is a minimal foliation. Combining this with the previous claim, we deduce that $\Lambda=\mathbb{T}^2$, which means that $h$ is conjugacy.\\
\indent We prove the claim now. If $y$ is not in $\Lambda$, there is $y'$ such that $hy'=hy$.
Thus for any $n\in\NN$, $d(hf^ny,hf^ny')=d(A^nhy,A^nhy')=0$, i.e., $hf^ny=hf^ny'$. Choose $\tilde{y}$, a lift of $y$. We will prove $y'\notin\mathscr{F}^u(y)$. Suppose $y'\in\mathscr{F}^u(y)$. There is a lift $\tilde{y'}\in \tilde{\mathscr{F}}^u(\tilde{y})$ of $y'$. Then $\tilde{h}\tilde{f}^n\tilde{y}=\tilde{h}\tilde{f}^n\tilde{y'}$.
However, when $n\to +\infty$,
$$d(\tilde{h}\tilde{f}^n\tilde{y},\tilde{h}\tilde{f}^n\tilde{y'})
\geq d(\tilde{f}^n\tilde{y},\tilde{f}^n\tilde{y'})-2||\tilde{h}-id||
\geq a^{-1}(d_{\tilde{\mathscr{F}}^u}(\tilde{f}^n\tilde{y},\tilde{f}^n\tilde{y'})-b)-2||\tilde{h}-id||\to +\infty.
$$
It is a contradiction. So $y'\notin\mathscr{F}^u(y)$. For any $z\in\mathscr{F}^u(y)$,
suppose $\tilde{z}\in\tilde{\mathscr{F}}^u(\tilde{y})$ is a lift of $z$, and
$\tilde{y'}$ a lift of $y'$ such that $\tilde{h}\tilde{y'}=\tilde{h}\tilde{y}$.
Denote by $z'$ the projection of $\tilde{z'}=\tilde{\mathscr{F}}^u(\tilde{y'})\cap\tilde{\mathscr{F}}^c(\tilde{z})$.
Since $\tilde{h}\tilde{z'}=\tilde{L}^u(\tilde{h}\tilde{y'})\cap \tilde{L}^c(\tilde{h}\tilde{z})=
\tilde{L}^u(\tilde{h}\tilde{y})\cap \tilde{L}^c(\tilde{h}\tilde{z})=\tilde{h}\tilde{z},
$
$hz'=hz$. Noticing that $z'\in \mathscr{F}^u(y')$, $y'\notin\mathscr{F}^u(y)$, and $z\in\mathscr{F}^u(y)$, so $z\neq z'$. This proves that $z$ is not in $\Lambda$.
\end{proof}

\subsection{Differentiable conjugacy}
In the following Proposition \ref{period equal} and Proposition \ref{equal}, we will prove that under the assumption of Theorem \ref{main}, if $f$ is area expanding and $E^c$ is $C^1$, then $f$ and $A$ have the same center periodic data. Moreover, in Proposition \ref{c-smooth-1}, we will prove that the conjugacy $h$ is $C^1$ along the center foliation. An incidental consequence is that the center foliation is uniformly expanding.

\begin{lemma}\cite[section 6, Table 1]{PSW}
If the tangent bundle $E$ of a foliation $\mathscr{F}$ is $C^1$ , then the leaves and the local holonomies along $\mathscr{F}$ are uniformly $C^1$.
\end{lemma}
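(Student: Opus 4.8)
The plan is to deduce the statement from the smooth dependence of solutions of ordinary differential equations on initial conditions and parameters, carried out inside foliation charts. First I would localize: for a point $p$ of the ambient manifold $M$ (in the paper $M=\TT^2$), with $n=\dim M$ and $k=\dim\mathscr{F}$, choose $C^\infty$ coordinates $(u,v)\in\RR^k\times\RR^{n-k}$ centered at $p$ with $E_p=\RR^k\times\{0\}$. Since $E$ is $C^1$, after shrinking the neighborhood we may write $E_x$ as the graph of a linear map $\omega_x\colon\RR^k\to\RR^{n-k}$ depending $C^1$ on $x$, and the leaf of $\mathscr{F}$ through $(0,y_0)$ appears as the graph $\{(u,\varphi(u,y_0))\}$ of the solution of the first-order system
$$\partial_u\varphi(u,y_0)=\omega_{(u,\varphi(u,y_0))},\qquad \varphi(0,y_0)=y_0.$$
Because $E$ is the tangent bundle of an actual foliation it is involutive, so this a priori overdetermined system is compatible and has a unique local solution $\varphi$ by the Frobenius theorem.

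Next I would invoke the regularity theory for this system. With the $C^1$ right-hand side $\omega$, the ``flow'' $\varphi$ is $C^1$ jointly in $(u,y_0)$, the derivative in $y_0$ being governed by the variational equation with continuous coefficients; and for fixed $y_0$ the curve $u\mapsto\varphi(u,y_0)$ has derivative equal to $\omega$ composed with a $C^1$ map, hence is $C^2$. Thus every leaf is a $C^2$, in particular $C^1$, submanifold, and leaves depend $C^1$ on the transverse parameter $y_0$. The local holonomy between two small transversals $\{u=u_0\}$ and $\{u=u_1\}$ is obtained by sliding along leaves; it sends the point of $\{u=u_0\}$ with transverse coordinate $v$ to $\varphi\bigl(u_1,\varphi(u_0,\cdot)^{-1}(v)\bigr)$, a composition of $C^1$ maps (here $\varphi(u_0,\cdot)$ is a $C^1$ diffeomorphism onto its image, since $D_{y_0}\varphi(0,\cdot)=\mathrm{id}$), with $C^1$ inverse obtained by running the $u$-parameter backwards.

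It then remains to upgrade ``$C^1$ in each chart'' to ``uniformly $C^1$''. Here I would use compactness of $\TT^2$: it is covered by finitely many such charts, and on each the $C^1$-norm of $\omega$ is controlled by the $C^1$-norm of $E$. The Gronwall and variation-of-parameters estimates bounding $\|\varphi(\cdot,y_0)\|_{C^1}$ and $\|D_{y_0}\varphi\|$, together with the uniform modulus of continuity of $D\omega$, then give bounds and moduli of continuity independent of the base point $p$, which is precisely uniform $C^1$-ness of the leaves and of the local holonomies. If the statement is needed on the universal cover $\RR^2$, the same bounds transfer because $\omega$ is $\ZZ^2$-periodic.

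The step I expect to be the main obstacle is the bookkeeping around the overdetermined leaf system and its parameter dependence: one must use involutivity of $E$ — legitimate here because $E=E^c$ is genuinely tangent to the foliation $\mathscr{F}^c$ — to know that the coordinate-direction ODEs are mutually compatible in the merely $C^1$ category, and then check that this compatibility survives differentiation in the initial condition $y_0$. In the situation actually used below this difficulty disappears: $\mathscr{F}^c$ is one-dimensional, the leaf system is the single $C^1$ ODE generated by a unit vector field spanning $E^c$, and the lemma reduces to the familiar facts that the flow of a $C^1$ vector field on $\TT^2$ is $C^1$ with $C^2$ orbits and that its Poincar\'e maps between nearby transversals are $C^1$, everything uniform by compactness.
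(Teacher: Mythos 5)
Your argument is correct, but it is worth noting that the paper does not prove this lemma at all: it is quoted verbatim as a known result from Pugh--Shub--Wilkinson (\cite[Section 6, Table 1]{PSW}), where the regularity of leaves and holonomies as a function of the regularity of the tangent (and transverse) data is established in much greater generality. What you supply is a self-contained elementary proof of the special case actually needed, and it holds up: since $E$ is $C^1$, it is locally spanned (orientability is irrelevant for a local statement) by a $C^1$, hence locally Lipschitz, vector field, so by uniqueness of solutions the leaves of $\mathscr{F}$ locally coincide with the integral curves; the flow of a $C^1$ field is $C^1$ jointly in time and initial condition, its orbits are $C^2$, the derivative in the transverse parameter solves the linear variational equation and is therefore invertible, so the sliding maps between transversals are $C^1$ with $C^1$ inverse; and compactness of $\TT^2$ plus Gronwall-type estimates give uniformity. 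Your worry about the overdetermined system in higher leaf dimension is also handled correctly — one does not need a $C^1$ Frobenius theorem because the leaves already exist by hypothesis, and parameter dependence follows by composing the coordinate-direction flows — though, as you say, this is moot here since $\mathscr{F}^c$ and $\mathscr{F}^u$ are one-dimensional. The only trade-off is scope: PSW's table also covers lower regularity (H\"older) and transverse regularity statements that your ODE argument does not reach, but for the $C^1$ statement used in Propositions \ref{period equal} and \ref{u} your route is a legitimate, more elementary substitute for the citation.
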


\begin{proposition}\label{period equal}
Under the assumption of Theorem \ref{main}, if $f$ is area expanding, and $E^c$ is $C^1$, then for any period points $p$ and $q$, $\lambda^c(p)=\lambda^c(q)$, where $\lambda^c(p)=||Df^{\pi(p)}|_{E^c_f(p)}||^{\frac{1}{\pi(p)}}$, and $\pi(p)$ is the period of $p$.
\end{proposition}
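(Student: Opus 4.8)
\emph{Strategy.} It suffices to prove that $\lambda^c(p)=|\lambda^c_A|$ for every periodic point $p$: the right-hand side does not depend on $p$, so this yields $\lambda^c(p)=\lambda^c(q)$, and it is exactly the assertion that $f$ and $A$ have the same center periodic data. So fix a periodic point $p$ of period $n=\pi(p)$ and set $g=f^n$. Since $f$ maps center leaves to center leaves and $g(p)=p$, the leaf $W=\mathscr{F}^c(p)$ is $g$-invariant, and as $E^c$ is $C^1$ the lemma quoted from \cite{PSW} shows that $g|_W$ is a (uniformly) $C^1$ diffeomorphism of $W\cong\RR$ with fixed point $p$, whose multiplier $\mu$ satisfies $|\mu|=\lambda^c(p)^{n}$. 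On the other side $h(p)$ is a fixed point of $A^n$, which preserves the leaf $L=\mathscr{A}^c(h(p))$; $L$ is a straight line of the flat torus, so in its arc-length parameter $A^n|_L$ is the linear expansion $s\mapsto(\lambda^c_A)^{n}s$. By Proposition \ref{center foliation} and Proposition \ref{invertible}, $h$ restricts to a homeomorphism $h|_W\colon W\to L$ conjugating $g|_W$ to this linear map; in the lift, $\tilde h$ conjugates $\tilde g:=T_v^{-1}\tilde f^{n}|_{\tilde W}$ (where $v=\tilde f^{n}\tilde p-\tilde p\in\ZZ^2$ and $T_v$ is translation by $v$) to $s\mapsto(\lambda^c_A)^{n}s$ on $\tilde L$. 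Passing to $g^2$ if necessary we may assume $\Lambda:=(\lambda^c_A)^{n}>1$ and $g|_W$ orientation preserving.

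The first substantive step is to observe that $h|_W$ is a \emph{quasi-isometry}. Lifting to $\RR^2$, Proposition \ref{center foliation} gives $\tilde h(\tilde W)=\tilde L$ with $\tilde L$ a straight line; combining $\|\tilde h-\mathrm{id}\|<\infty$ with the quasi-isometry of $\tilde{\mathscr{F}}^c$ (the corollary following Lemma \ref{quasi-iso}) yields constants $a\ge1$, $b\ge0$ such that, in arc-length coordinates, the conjugacy $H:=h|_W\colon\RR\to\RR$ fixes $0$ and satisfies $a^{-1}|t-t'|-b\le|H(t)-H(t')|\le|t-t'|+b$. Writing $\tau$ for $g|_W$ in these coordinates and using $H(\tau^k t)=\Lambda^k H(t)$ together with these inequalities at large scales, one gets that for every $t\ne0$ the forward escape rate along $W$ equals $\log\Lambda$, i.e. $\lim_k\frac1k\log d_W(g^k y,p)=n\log|\lambda^c_A|$ for all $y\in W\setminus\{p\}$. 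I would also record here that $|\mu|>1$: a parabolic fixed point of a $C^r$ ($r\ge2$) one-dimensional map is always one-sided, hence never a topological repeller, whereas $g|_W$ is topologically conjugate to $s\mapsto\Lambda s$ and so $p$ is a two-sided topological repeller; this rules out $|\mu|=1$, and $|\mu|<1$ is impossible for the same reason.

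The remaining step — identifying the escape rate $n\log|\lambda^c_A|$ with the local quantity $\log|\mu|=n\log\lambda^c(p)$ — is, I expect, the hard core. It cannot be done on $W$ alone: an abstract $C^\infty$ diffeomorphism of $\RR$ quasi-isometrically conjugate to $s\mapsto\Lambda s$ can have any multiplier $>1$ at its fixed point, since a quasi-isometry of $\RR$ carries no information at small scales. One must therefore exploit the two-dimensional structure, and the natural tool is the holonomy of $\mathscr{F}^u$ between center leaves: because $f$ is area expanding, $E^u$ is $C^2$ (Proposition \ref{small neighborhood}), so this holonomy is $C^1$ with nonzero derivative and is $f$-equivariant ($f\circ H^u=H^u\circ f$). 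Equivariance expresses $\log\|Df|_{E^c}\|$ as differing from its restriction to a fixed center leaf by an $f$-coboundary; combined with the minimality of $\mathscr{F}^u$ (Proposition \ref{invertible}) this should let one transport the local center multiplier between periodic orbits and conclude $\lambda^c(p)=\lambda^c(q)$, the common value then being $|\lambda^c_A|$ by the escape-rate computation.

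The genuine obstacles in this last step are, first, the regularity of the unstable holonomy and, second, the bookkeeping of lifts: the correct lift of $f^n$ is $T_v^{-1}\tilde f^n$, lifts of distinct periodic points do not lie on a common lifted center leaf, and the equivariance relations must be tracked through the intervening deck translations. An alternative route, which sidesteps pairing $p$ with $q$, is to use that $f$ — being conjugate to the linear map $A$ and area expanding — has a unique measure of maximal entropy, namely the $h^{-1}$-image of Lebesgue, and to compare its center Lyapunov exponent with the periodic data through Katok's periodic-orbit approximation theorem; this reduces the claim to the rigidity of the center exponent across all invariant measures, which one again establishes using the $C^2$ unstable holonomy.
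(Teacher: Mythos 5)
There is a genuine gap, and it sits exactly where you yourself flag it. The content of this proposition is the comparison of the center derivative along \emph{two different} periodic orbits, and your proposal never carries that out: the passage ``Equivariance expresses $\log\|Df|_{E^c}\|$ as \dots an $f$-coboundary; combined with the minimality of $\mathscr{F}^u$ \dots this should let one transport the local center multiplier between periodic orbits'' is a statement of intent, not an argument. Knowing that the unstable holonomy is $C^1$ and $f$-equivariant does not by itself transfer the multiplier from $p$ to $q$: the orbits of $p$ and $q$ do not lie on a common unstable leaf, and approximating $q$ by points of $\mathscr{F}^u(p)$ only helps if you can control Birkhoff sums of $\log\|Df|_{E^c}\|$ over longer and longer orbit segments of the approximating points while keeping the holonomy comparison uniform. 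Closing that is precisely what the paper's proof consists of: assuming $\lambda^c(p)\neq\lambda^c(q)$, it pulls back under $\tilde f^{-i}$ a configuration of four points built from lifts of $hp$ and $hq$, stays near the orbit of $p$ for $k$ steps and near the orbit of $q$ for the remaining $n-k$ steps with $k/n\to\theta=\ln\lambda^u_A/\ln\det A$, uses the quantitative density of the irrational linear unstable foliation ($\bigcup_{r\in\ZZ^2}B(\mathscr{A}^u_L(r),CL^{-1})=\RR^2$, a Diophantine-type input absent from your sketch) to make the two orbit pieces meet along a short unstable arc, and derives a contradiction because the ratio of the two matched center lengths tends to infinity while the unstable holonomy is uniformly $C^1$. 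Your alternative route via the measure of maximal entropy and Katok approximation is likewise only named, and it reduces to the same unproved rigidity of the center exponent.

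Two further remarks. First, the part you do prove (the escape rate along $W$ equals $n\log|\lambda^c_A|$ via the quasi-isometry of $h|_W$) is not what this proposition needs; it is essentially the content of the paper's subsequent Proposition \ref{equal} (there proved with Livschitz plus quasi-isometry), and, as you correctly note, a quasi-isometric conjugacy says nothing about the multiplier at the fixed point, so it cannot substitute for the orbit-comparison step. Second, your claim that $|\mu|=1$ is impossible because ``a parabolic fixed point of a $C^r$ one-dimensional map is always one-sided, hence never a topological repeller'' is false: $x\mapsto x+x^3$ is $C^\infty$, has multiplier $1$, and is a two-sided topological repeller (indeed topologically conjugate to $x\mapsto 2x$); ruling out the parabolic case is part of the analytic work, not a topological freebie.
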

\begin{proof}
Suppose that
$\sup_{p\in Per(f)}\lambda^c(p)=\lambda_+$, $\inf_{p\in Per(f)}\lambda^c(p)=\lambda_-$, and $\lambda_-<\lambda_+$. We will get a contradiction. Denote $\theta = \frac{\ln \lambda^u(A)}{\ln \det(A)}$. Choose a $\epsilon>0$ such that $\lambda_+>\lambda_-e^{\epsilon(1+2\theta^{-1})}$. For $\epsilon>0$, there is a smooth adapted Riemannian metric such that $\lambda_-e^{-\frac{\epsilon}{2}}<||Df|_{E^c_f(x)}||<\lambda_+ e^{\frac{\epsilon}{2}}$ for any $x\in \mathbb{T}^2$. Choose periodic points $p$ and $q$ such that $\lambda^c(p)<\lambda_-e^{\frac{\epsilon}{2}}$ and $\lambda^c(q)>\lambda^+e^{-\frac{\epsilon}{2}}$. Then $$\lambda^c(q)>\lambda^c(p)e^{2\epsilon\theta^{-1}},\quad
\lambda^c(p)e^{-\epsilon}<||Df|_{E^c_f(x)}||<\lambda^c(q)e^{\epsilon}, \quad\forall x\in\mathbb{T}^2.$$
\indent Denote by $\mathscr{A}_L^u(r)$ the unstable leaf of $r$ of size $L$. Since the eigenvalue of $A$ is irrational, there is a constant $C$ such that $$\bigcup_{r\in\ZZ^2}B(\mathscr{A}^u_L(r),CL^{-1})=\RR^2.$$
It implies that for any $k\in\NN$,
$$\bigcup_{r\in A^k\ZZ^2}B(\mathscr{A}^u_{L'}(r),(\det(A))^kCL'^{-1})=\RR^2.$$
\indent Without loss of generality, suppose $f(p)=p$ and $f(q)=q$, otherwise we use $f^{[\pi(p),\pi(q)]}$ instead of $f$. Choose $\pi:\RR^2\to \mathbb{T}^2$, such that $\pi(0)=hq$. Suppose $x$ is a lift of $hp$ and $Ax=x+m$ with $m\in \ZZ^2$.\\
\indent There is $\eta>0$ such that for any $d(r,s)<\eta$ on $\mathbb{T}^2$, $||Df|_{E^c_f(r)}||e^{-\epsilon}<||Df|_{E^c_f(s)}||<||Df|_{E^c_f(r)}||e^{\epsilon}$. And choose $\delta>0$ such that for any $d(r,s)<2\delta$ on $\mathbb{T}^2$ we have that $d(h^{-1}r,h^{-1}s)<\eta$. Choose $L'$ such that $\det(A)^kCL'^{-1}=\sin(\angle(E^c_A,E^u_A))\delta$. There is
$$y=x+m+\cdots+A^{k-1}m+A^kn \quad \text{with} \quad n\in \ZZ^2$$
satisfies
$$d(z,y)<\delta,\quad d(z,0)<L',\quad \text{where}\quad z=\tilde{\mathscr{A}}^u(0)\cap\tilde{\mathscr{A}}^c(y).$$
Suppose $w\in \tilde{\mathscr{A}}^c(0)$ s.t. $d(w,0)=\delta$. Let $v=\tilde{\mathscr{A}}^u(w)\cap  \tilde{\mathscr{A}}^c(y)$, see Figure \ref{picture}.

\begin{figure}[htbp]
  \centering
  \includegraphics[width=10cm]{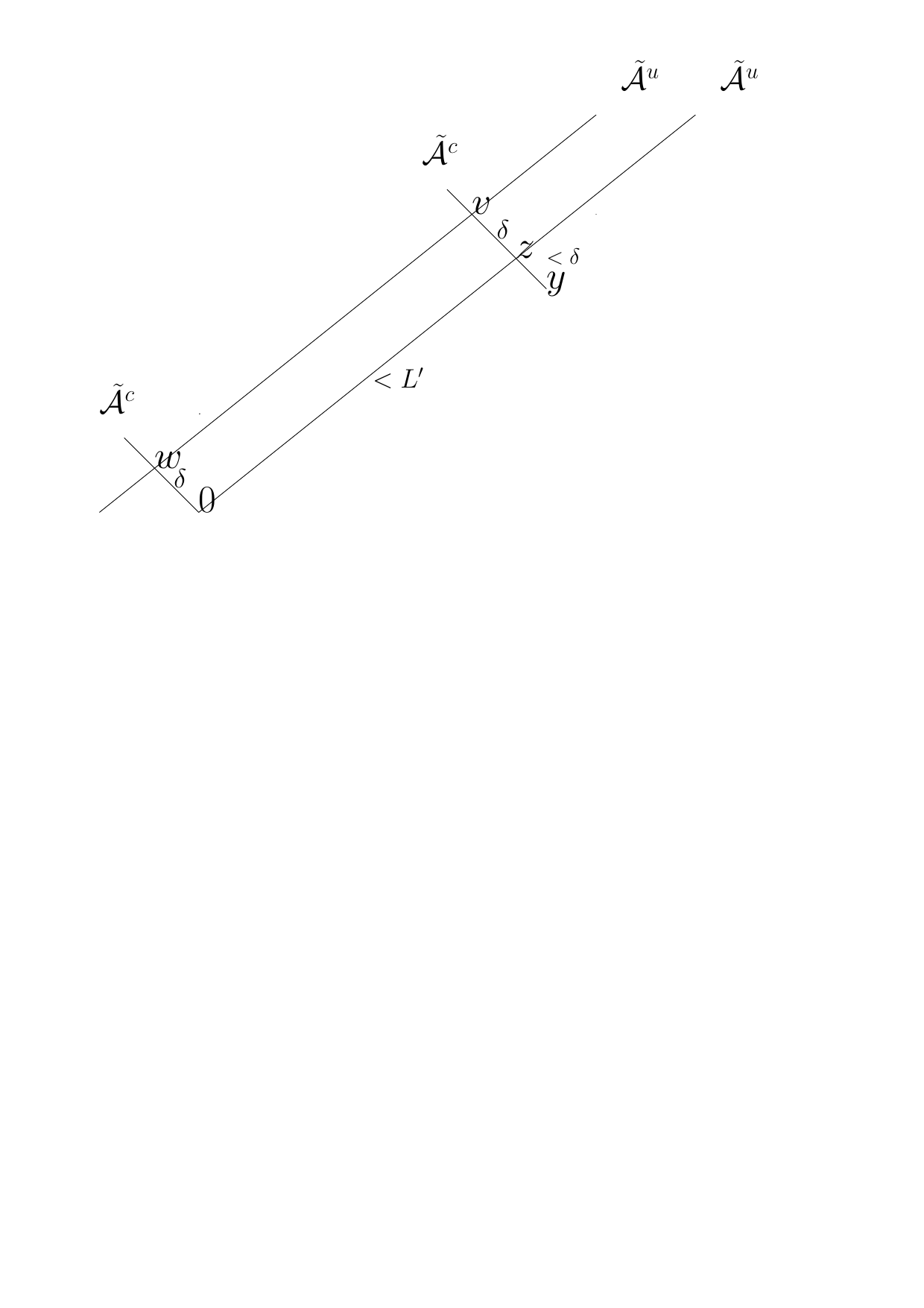}\\
  \caption{Global product on $\RR^2$}\label{picture}
\end{figure}

\indent Denote $\tilde{h}^{-1}y=y'$, $\tilde{h}^{-1}z=z'$, $\tilde{h}^{-1}w=w'$, $\tilde{h}^{-1}v=v'$, $\tilde{h}^{-1}0=o'$. Noticing that $\tilde{h}\circ\tilde{f}=A\circ\tilde{h}$, we have that $$d(\tilde{h}\tilde{f}^{-i}z',\tilde{h}\tilde{f}^{-i}y')=d(A^{-i}z,A^{-i}y)\leq d(z,y)<\delta$$
$$d(\tilde{h}\tilde{f}^{-i}v',\tilde{h}\tilde{f}^{-i}z')=d(A^{-i}v,A^{-i}z)\leq d(v,z)<\delta$$
and
$$d(\tilde{h}\tilde{f}^{-i}w',\tilde{h}\tilde{f}^{-i}o')=d(A^{-i}w,A^{-i}0)\leq d(w,0)=\delta,$$
which imply that center curve connecting $\tilde{f}^{-i}v'$ and $\tilde{f}^{-i}z'$ is contained in $B(\tilde{f}^{-i}y',\eta)$, and $\tilde{f}^{-i}w'$ is contained in $B(\tilde{f}^{-i}o',\eta)$.\\
\indent Note that $y=x+m+\cdots+A^{k-1}m+A^kn$, then $\pi (A^{-i}y)=\pi(x)=hp$ for $0\leq i\leq k$.
Thus for $0\leq i\leq k$,
$$\pi(\tilde{f}^{-i}y')=\pi(\tilde{f}^{-i}\tilde{h}^{-1}y)=\pi(\tilde{h}^{-1}A^{-i}y)=h^{-1}\pi(A^{-i}y)=p.$$
For integer $i\geq 0$,
$$\pi(\tilde{f}^{-i}o')=\pi(\tilde{f}^{-i}\tilde{h}^{-1}0)=\pi(\tilde{h}^{-1}0)=h^{-1}\pi(0)=q.$$
Choose $n\in \NN$ such that $$\frac{\ln L'}{\ln \lambda^u(A)}<n\leq \frac{\ln L'}{\ln \lambda^u(A)}+1.$$
Combined with $\det(A)^kCL'^{-1}=\sin(\angle(E^c_A,E^u_A))\delta$, we get that
$$
\frac{k}{n}\geq \frac{\ln \lambda^u(A)}{\ln \det(A)}\frac{\ln L'+\ln C^{-1}\sin(\angle(E^c_A,E^u_A))\delta}{\ln L'+
\ln \lambda^u(A)}\to \theta (k\to +\infty)
$$
\indent Now we can estimate $d(\pi(\tilde{f}^{-n}v'),\pi(\tilde{f}^{-n}z'))$ and $d(\pi(\tilde{f}^{-n}w'),\pi(\tilde{f}^{-n}o'))$. Suppose that if $r,s$ on $\mathbb{T}^2$ satisfy that $d(r,s)<\delta'$,  then $d(hr,hs)<\delta$. Since $d(\pi v,\pi z)=d(v,z)=d(w,0)=\delta$, $d(h^{-1}\pi v,h^{-1}\pi z)\geq \delta'$.\\
\indent On the one hand, $$d(\tilde{h}\tilde{f}^{-n}o',\tilde{h}\tilde{f}^{-n}z')=d(0,A^{-n}z)<L'\lambda^u(A)^{-n}<1.$$
On the other hand,
\begin{equation*}
\begin{split}
d_{\mathscr{F}^c}(\pi(\tilde{f}^{-n}v'),\pi(\tilde{f}^{-n}z'))&
=\frac{d_{\mathscr{F}^c}(\pi(\tilde{f}^{-n}v'),\pi(\tilde{f}^{-n}z'))}{d_{\mathscr{F}^c}(\pi(\tilde{f}^{-k}v'),\pi(\tilde{f}^{-k}z'))}
\cdot d_{\mathscr{F}^c}(\pi(\tilde{f}^{-k}v'),\pi(\tilde{f}^{-k}z'))\\
&\geq (\lambda^c(q)^{-1}e^{-\epsilon})^{n-k}(\lambda^c(p)^{-1}e^{-\epsilon})^k\cdot d_{\mathscr{F}^c}(h^{-1}\pi v, h^{-1}\pi z)\\
&\geq \delta'\lambda^c(q)^{-n+k}\lambda^c(p)^{-k}e^{-n\epsilon},
\end{split}
\end{equation*}
and
\begin{equation*}
\begin{split}
d_{\mathscr{F}^c}(\pi(\tilde{f}^{-n}w'),\pi(\tilde{f}^{-n}o'))&\leq (\lambda^c(q)^{-1}e^{\epsilon})^n
\cdot d_{\mathscr{F}^c}(h^{-1}\pi w, q)\\
&=\lambda^c(q)^{-n}e^{n\epsilon}\cdot d_{\mathscr{F}^c}(h^{-1}\pi w, q),
\end{split}
\end{equation*}
then
\begin{equation*}
\begin{split}
\frac{d_{\mathscr{F}^c}(\pi(\tilde{f}^{-n}v'),\pi(\tilde{f}^{-n}z'))}{d_{\mathscr{F}^c}(\pi(\tilde{f}^{-n}w'),\pi(\tilde{f}^{-n}o'))}
&\geq e^{-2n\epsilon}\left(\frac{\lambda^c(q)}{\lambda^c(p)}\right)^k\cdot\frac{\delta'}{d_{\mathscr{F}^c}(h^{-1}\pi w, q)}\\
&\geq \left(\frac{\lambda^c(q)}{e^{2(\theta+o(1))^{-1}\epsilon}\lambda^c(p)}\right)^{(\theta+o(1)) n}\frac{\delta'}{d_{\mathscr{F}^c}(h^{-1}\pi w, q)}\to +\infty(n\to +\infty).
\end{split}
\end{equation*}
\qquad It is a contradiction, since the $C^1$ smoothness of $E^u$ implies that the local holonomy along $\mathscr{F}^u$ is uniform $C^1$.
\end{proof}

\begin{proposition}\label{equal}
Under the assumption of Theorem \ref{main}, if $f$ is area expanding, and $E^c$ is $C^1$, then for any period point $p$, $\lambda^c(p)=\lambda^c(A)$.
\end{proposition}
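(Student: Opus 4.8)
The plan is to pin down the common value $\lambda_0$ of the centre periodic exponents furnished by Proposition \ref{period equal} by exploiting the quasi-isometric behaviour of the conjugacy $\tilde h$ along centre leaves.

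First I would upgrade Proposition \ref{period equal} to a \emph{uniform} estimate. Because $f$ is topologically conjugate to the transitive expanding map $A$ (Proposition \ref{invertible}), $f$ has dense periodic orbits and periodic measures are weak-$*$ dense in the set $\mathcal{M}_f(\TT^2)$ of $f$-invariant probability measures. Since $E^c$ is one-dimensional and $Df$-invariant, the function $\varphi(x):=\log\|Df|_{E^c_x}\|$ is continuous and $\sum_{i=0}^{n-1}\varphi(f^ix)=\log\|Df^n|_{E^c_x}\|$; evaluating on periodic measures gives $\int\varphi\,d\mu_p=\log\lambda^c(p)=\log\lambda_0$, so by density $\int\varphi\,d\mu=\log\lambda_0$ for \emph{every} $\mu\in\mathcal{M}_f(\TT^2)$. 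Since the integral of $\varphi$ is the same constant over all invariant measures, the standard identities $\lim_n\max_x\tfrac1n\sum_{i<n}\varphi(f^ix)=\max_\mu\int\varphi\,d\mu$ and $\lim_n\min_x\tfrac1n\sum_{i<n}\varphi(f^ix)=\min_\mu\int\varphi\,d\mu$ squeeze the Birkhoff averages, i.e.\ $\tfrac1n\log\|Df^n|_{E^c_x}\|\to\log\lambda_0$ uniformly in $x$; equivalently, for each $\e>0$ there is $N$ with $\lambda_0^{n}e^{-n\e}\le\|Df^n|_{E^c_x}\|\le\lambda_0^{n}e^{n\e}$ for all $x\in\TT^2$ and $n\ge N$.

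Next I would record that $\tilde h$ is a quasi-isometry along centre leaves. From $\|\tilde h-\mathrm{id}\|<\infty$, the quasi-isometry of $\tilde{\mathscr F}^c$, and the fact that $\tilde h$ maps each leaf of $\tilde{\mathscr F}^c$ onto a leaf of $\tilde{\mathscr A}^c$ (Proposition \ref{center foliation}) — on which the intrinsic length equals the Euclidean one — one gets constants $a\ge1$, $b\ge0$ with
\[
a^{-1}\,d_{\mathscr F^c}(u,v)-b\ \le\ d_{\mathscr A^c}(hu,hv)\ \le\ a\,d_{\mathscr F^c}(u,v)+b
\]
for all $u,v$ on a common centre leaf of $f$. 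Now fix $x\neq y$ on one centre leaf of $f$, set $\ell:=d_{\mathscr F^c}(x,y)>0$ and $c_0:=d_{\mathscr A^c}(hx,hy)$; here $c_0>0$ because $h$ is injective (Proposition \ref{invertible}). Since centre leaves are lines and $Df$ is an isomorphism on $E^c$, $f$ restricts to a strictly monotone bijection on each centre leaf, so $f^n$ carries the centre arc from $x$ to $y$ onto the centre arc from $f^nx$ to $f^ny$; hence $d_{\mathscr F^c}(f^nx,f^ny)=\int_{[x,y]_{\mathscr F^c}}\|Df^n|_{E^c}\|\,ds$, which by Step 1 lies in $[\lambda_0^{n}e^{-n\e}\ell,\ \lambda_0^{n}e^{n\e}\ell]$ for $n\ge N$. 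On the other hand $hf^nx=A^nhx$ and $hf^ny=A^nhy$ lie on one centre leaf of $A$, and $A$ scales centre length by $|\lambda^c_A|$, so $d_{\mathscr A^c}(hf^nx,hf^ny)=|\lambda^c_A|^nc_0$. Substituting these into the displayed inequalities, taking logarithms, dividing by $n$ and letting $n\to\infty$ (using $|\lambda^c_A|>1$ to absorb the additive constant $b$) gives $\log\lambda_0-\e\le\log|\lambda^c_A|\le\log\lambda_0+\e$; as $\e>0$ is arbitrary, $\lambda^c(p)=\lambda_0=|\lambda^c_A|=\lambda^c(A)$.

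I expect the main difficulty to be the first step: obtaining the \emph{uniform} exponential rate for $\|Df^n|_{E^c}\|$, rather than merely an almost-everywhere one, is exactly what powers the geometric comparison, and it relies both on the equal-periodic-data conclusion of Proposition \ref{period equal} and on the density of periodic measures, which is available only because Proposition \ref{invertible} has already established the topological conjugacy with the expanding map $A$. The remaining bookkeeping in the last two paragraphs (centre leaves are lines, $f$ and $A$ act injectively on them, intrinsic lengths transform as claimed) should be routine.
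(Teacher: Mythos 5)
Your argument is correct, but it reaches the uniform control on $\|Df^n|_{E^c}\|$ by a genuinely different route than the paper. The paper takes the common periodic value $\lambda$ from Proposition \ref{period equal} and applies the Livschitz theorem to write $\ln\|Df|_{E^c_f(x)}\|=\psi(x)-\psi(fx)+\ln\lambda$ with $\psi$ H\"older, then builds the adapted leafwise metric $d'(x,y)=\int e^{\psi}\,ds$, which scales \emph{exactly} by $\lambda$ under $f$ and is uniformly comparable to $d_{\mathscr F^c}$; you instead push the periodic data through invariant measures: density of periodic measures in $\mathcal M_f$ (transported from the expanding map $A$ via the conjugacy of Proposition \ref{invertible}, i.e.\ a Sigmund/specification-type fact that deserves a citation) forces $\int\log\|Df|_{E^c}\|\,d\mu=\log\lambda_0$ for all invariant $\mu$, whence uniform convergence of the Birkhoff averages and the bounds $\lambda_0^n e^{\pm n\e}$. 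From that point on the two proofs coincide in substance: both compare the growth of leafwise centre distances under $f$ (via lifts, quasi-isometry of $\tilde{\mathscr F}^c$, and $\|\tilde h-\mathrm{id}\|<\infty$) with the exact scaling $|\lambda^c_A|^n$ on the image leaf of $A$, and squeeze $\lambda_0=|\lambda^c_A|$. What the paper's construction buys beyond this proposition is the metric $d'$ itself, which is reused in Proposition \ref{c-smooth-1} to make the unstable holonomies isometric and prove $C^1$-smoothness of $h$ along the centre, so Livschitz cannot be dispensed with later; what your route buys is independence from the Livschitz theorem (no H\"older transfer function, no closing argument for the nonlinear $f$) at the cost of the softer but nontrivial input that periodic measures are dense, plus the standard max/min Birkhoff-average identities. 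Minor points to tidy: state the leafwise distance computations on the universal cover (as the paper does) so that $d_{\mathscr F^c}(f^nx,f^ny)$ is literally the arc length of the image arc, and note that $c_0>0$ uses injectivity of $h$, which is where the area-expanding hypothesis enters through Proposition \ref{invertible}.
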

\begin{proof}
From Proposition \ref{period equal}, there is a constant $\lambda$ such that for any period point $p$, $\lambda^c(p)=\lambda$. Livschitz Theorem tells us that there is a H\"older function $\psi:\mathbb{T}^2\to \RR$ such that $$\ln ||Df|_{E^c_f(x)}||=\psi(x)-\psi(fx)+\ln \lambda.$$
Define a new metric $d'$ on $\mathscr{F}^c$ as follows.
$$
d'(x,y):=\int_0^1e^{\psi(\gamma(t))}|\dot{\gamma}(t)|dt,
$$
where $\gamma[0,1]\subset \mathscr{F}^c(x), \gamma(0)=x, \gamma(1)=y$.
Then
$$d'(fx,fy)=\lambda d'(x,y).$$
Suppose that $\psi$ is bounded by $\ln K$ on $\mathbb{T}^2$, then $$K^{-1}d_{\mathscr{F}^c}(x,y)\leq d'(x,y)\leq Kd_{\mathscr{F}^c}(x,y).$$
\indent Suppose that $\tilde{x}$ is a lift of $x$, and $\tilde{y}$ is the lift of $y\in\mathscr{F}^c(x)$ such that $\tilde{y}\in\tilde{\mathscr{F}}^c(\tilde{x})$.\\
\indent On the one hand,
\begin{equation*}
\begin{split}
\lambda^n d'(x,y)&=d'(f^nx,f^ny)\\
&\geq K^{-1}d_{\mathscr{F}^c}(f^nx,f^ny)\\
&=K^{-1}d_{\tilde{\mathscr{F}}^c}(\tilde{f}^n\tilde{x},\tilde{f}^n\tilde{y})\\
&\geq K^{-1}d(\tilde{f}^n\tilde{x},\tilde{f}^n\tilde{y})\\
&\geq K^{-1}(d(\tilde{h}\tilde{f}^n\tilde{x},\tilde{h}\tilde{f}^n\tilde{y})-2||\tilde{h}-id||)\\
&= K^{-1}(\lambda^c(A)^nd(\tilde{h}\tilde{x},\tilde{h}\tilde{y})-2||\tilde{h}-id||)
\end{split}
\end{equation*}
It implies $\lambda\geq \lambda^c(A)$ when $n$ goes to infinity.\\
\indent On the other hand, $\tilde{\mathscr{F}}^c$ is quasi-isometric, thus
\begin{equation*}
\begin{split}
\lambda^n d'(x,y)&=d'(f^nx,f^ny)\\
&\leq Kd_{\mathscr{F}^c}(f^nx,f^ny)\\
&=Kd_{\tilde{\mathscr{F}}^c}(\tilde{f}^n\tilde{x},\tilde{f}^n\tilde{y})\\
&\leq K(ad(\tilde{f}^n\tilde{x},\tilde{f}^n\tilde{y})+b)\\
&\leq K(a(d(\tilde{h}\tilde{f}^n\tilde{x},\tilde{h}\tilde{f}^n\tilde{y})+2||\tilde{h}-id||)+b)\\
&\leq K(a(\lambda^c(A)^nd(\tilde{h}\tilde{x},\tilde{h}\tilde{y})+2||\tilde{h}-id||)+b).
\end{split}
\end{equation*}
It implies $\lambda\leq \lambda^c(A)$ when $n$ goes to infinity.\\
\indent Combining two sides of conclusions, we get that $\lambda=\lambda^c(A)$.
\end{proof}

\begin{corollary}
Under the assumption of Theorem \ref{main}, if $f$ is area expanding, and $E^c$ is $C^1$, then the center foliation is uniformly expanding.
\end{corollary}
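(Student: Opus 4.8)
The plan is to extract uniform expansion directly from the cohomological description of $\|Df|_{E^c_f}\|$ that has already been established in the proof of Proposition \ref{equal}. Indeed, Proposition \ref{equal} together with Proposition \ref{period equal} shows that every periodic point has the same center multiplier $\lambda^c(p)=\lambda^c(A)=:\lambda$, and $\lambda=|\lambda^c_A|>1$ by the standing hypothesis that $A$ is expanding. Applying the Livschitz Theorem to the H\"older cocycle $\ln\|Df|_{E^c_f(x)}\|$, one obtains a H\"older function $\psi:\TT^2\to\RR$ with
\[
\ln\|Df|_{E^c_f(x)}\|=\psi(x)-\psi(fx)+\ln\lambda,\qquad x\in\TT^2 .
\]

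First I would telescope this identity along an orbit: for every $x\in\TT^2$ and every $n\in\NN$,
\[
\ln\|Df^n|_{E^c_f(x)}\|=\sum_{i=0}^{n-1}\ln\|Df|_{E^c_f(f^ix)}\|=\psi(x)-\psi(f^nx)+n\ln\lambda .
\]
Since $\psi$ is continuous on the compact torus it is bounded, say $|\psi|\le\ln K$; hence $\|Df^n|_{E^c_f(x)}\|\ge K^{-2}\lambda^n$ for all $x$ and all $n$. Because $\lambda>1$, this lower bound tends to infinity uniformly in $x$, which is exactly the assertion that the center foliation $\mathscr{F}^c$ is uniformly expanding. Equivalently, one may phrase the argument through the adapted metric $d'$ built in the proof of Proposition \ref{equal}: there $d'(f^nx,f^ny)=\lambda^n d'(x,y)$ and $K^{-1}d_{\mathscr{F}^c}\le d'\le K\, d_{\mathscr{F}^c}$, so $d_{\mathscr{F}^c}(f^nx,f^ny)\ge K^{-2}\lambda^n d_{\mathscr{F}^c}(x,y)$ for all $x,y$ on a common center leaf, i.e.\ lengths along $\mathscr{F}^c$ grow at a uniform exponential rate.

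There is essentially no obstacle here: the substantive work — forcing all periodic center data to coincide with $\lambda^c(A)$ and producing the transfer function $\psi$ via Livschitz — has already been carried out in Propositions \ref{period equal} and \ref{equal}. The only points that require a moment's care are that $\psi$, being merely H\"older rather than smooth, nonetheless suffices because only its boundedness on the compact manifold $\TT^2$ is used, and that $\lambda=|\lambda^c_A|$ is genuinely strictly greater than $1$, which is guaranteed by the hypothesis $|\lambda^u_A|>|\lambda^c_A|>1$ of Theorem \ref{main}.
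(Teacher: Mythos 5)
Your argument is correct, and it reaches the same conclusion from the same two inputs (Propositions \ref{period equal} and \ref{equal}) but by a slightly different mechanism than the paper. The paper's proof is a two-line appeal to the smooth adapted Riemannian metric asserted in the first paragraph of the proof of Proposition \ref{period equal}: once $\lambda_-=\lambda_+=\lambda^c(A)>1$, that metric can be chosen so that $\|Df|_{E^c_f(x)}\|>1$ pointwise, i.e.\ one-step expansion in an adapted smooth metric. You instead telescope the Livschitz coboundary $\ln\|Df|_{E^c_f(x)}\|=\psi(x)-\psi(fx)+\ln\lambda$ already produced in Proposition \ref{equal} (valid since $E^c$ is one-dimensional and $Df$-invariant, so $\|Df^n|_{E^c_f(x)}\|=\prod_{i=0}^{n-1}\|Df|_{E^c_f(f^ix)}\|$), getting $\|Df^n|_{E^c_f(x)}\|\ge K^{-2}\lambda^n$ in the original metric, or equivalently leafwise growth $d_{\mathscr{F}^c}(f^nx,f^ny)\ge K^{-2}\lambda^n d_{\mathscr{F}^c}(x,y)$ via the H\"older-adapted metric $d'$. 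The two formulations of ``uniformly expanding'' are equivalent by the standard adapted-metric construction. What your route buys is that it avoids invoking the (unproved, standard-but-nontrivial) assertion that the sup/inf of periodic center data can be realized up to $\epsilon$ by a \emph{smooth} adapted metric, relying only on the boundedness of the H\"older transfer function $\psi$, at the cost of concluding eventual exponential expansion rather than one-step expansion in a smooth metric; the paper's route gives the cleaner pointwise statement $\|Df|_{E^c}\|>1$ but leans on that adapted-metric claim. Your two points of care (only boundedness of $\psi$ is needed, and $\lambda=\lambda^c(A)>1$ by hypothesis) are exactly the right ones.
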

\begin{proof}
In the first paragraph of proof of Proposition \ref{period equal}, $\lambda_{-}=\lambda_{+}=\lambda^c(A)>1$.
Then there is a smooth adapted Riemannian metric such that $||Df|_{E^c_f(x)}||>1$ for all $x\in\TT^2$, i.e., the center foliation is uniformly expanding.
\end{proof}

\begin{proposition}\label{c-smooth-1}
Under the assumption of Theorem \ref{main}, if $f$ is area expanding, and $E^c$ is $C^1$, then the conjugacy $h$ is $C^1$ along the center foliation.
\end{proposition}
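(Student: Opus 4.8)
The plan is to show that for every center leaf $\ell=\mathscr{F}^{c}(x)$ the homeomorphism $h|_{\ell}$ onto its image is a $C^{1}$ diffeomorphism whose derivative is bounded away from $0$ and $\infty$ uniformly in $\ell$; by Proposition \ref{center foliation} and Proposition \ref{h^{-1}} this is exactly what ``$h$ is $C^{1}$ along the center foliation'' means. Three facts established above are the input. By the preceding Corollary the center foliation is uniformly expanding, so in a smooth adapted metric $\|Df|_{E^{c}_{f}(y)}\|\ge\lambda_{c}>1$ for all $y$. By Proposition \ref{equal} and the Livschitz theorem there is a H\"older function $\psi:\TT^{2}\to\RR$ with $\ln\|Df|_{E^{c}_{f}(y)}\|=\psi(y)-\psi(fy)+\ln\lambda^{c}(A)$; in particular, after a $C^{1}$ reparametrisation with continuous density $e^{\psi}$, $f$ acts on each center leaf by the constant dilation $\lambda^{c}(A)$. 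And, since $E^{c}$ is $C^{1}$ and $f$ is $C^{r}$ with $r\ge3$, the restrictions of $f$ to center leaves are uniformly $C^{2}$, which combined with the uniform expansion gives a uniform bounded-distortion estimate for the iterates $f^{n}$ along center leaves. (Replacing $f$ by an iterate, which does not change $h$, we may assume $Df|_{E^{c}}$ preserves a chosen orientation of $E^{c}$ and $\lambda^{c}(A)>0$.)

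On the universal cover, iterating $\tilde h\tilde f=A\tilde h$ gives, for each center leaf $\tilde\ell$,
\[
\tilde h|_{\tilde\ell}=A^{-n}|_{\tilde{\mathscr{A}}^{c}}\circ\,\tilde h|_{\tilde f^{\,n}\tilde\ell}\,\circ\,\tilde f^{\,n}|_{\tilde\ell},\qquad n\ge0 .
\]
Parametrising $\tilde\ell$ by arclength $\theta:\RR\to\tilde\ell$ and the $A$-center leaves by Euclidean arclength, and using that $A$ scales $\tilde{\mathscr{A}}^{c}$ by the constant $\lambda^{c}(A)$, the displayed relation together with the Livschitz cocycle identity singles out the candidate leafwise derivative $c(\ell)\,e^{\psi}$. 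The target is then to prove, for every subarc $I\subset\ell$ and a constant $c(\ell)$ depending only on $\ell$,
\[
\mathrm{len}(h(I))=c(\ell)\int_{I}e^{\psi(\theta(s))}\,ds ,
\]
which says precisely that $h|_{\ell}$ is $C^{1}$ with derivative $c(\ell)\,e^{\psi}$; as $\psi$ is continuous (indeed H\"older) this gives $h|_{\ell}\in C^{1+\mathrm{H\ddot{o}lder}}$, uniformly in $\ell$. Differentiating the functional relation shows $c(f\ell)=c(\ell)$, so $c$ is an $f$-invariant continuous function of the leaf, hence globally constant by transitivity of $f$.

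I would establish the displayed identity following the scheme used for the $C^{1}$ linearisation of expanding circle maps. First, uniform bi-Lipschitz bounds for all the $h|_{\ell}$: iterating the functional relation in both time directions (on the universal cover $\tilde f$ is a diffeomorphism, so backward iterates of center leaves make sense) and choosing $n$ so that $\tilde f^{\,n}I$ has length of order one, the compactness of $\TT^{2}$ and the continuous dependence of center leaves on their base points bound $\mathrm{len}(h(I))$ above and below by multiples of $\mathrm{len}(I)$, uniformly; in particular each $h|_{\ell}$ is absolutely continuous with a.e.\ derivative in a fixed interval $[c_{-},c_{+}]\subset(0,\infty)$. Second, writing $h(I)=A^{-n}(h(f^{n}I))$ with $n$ chosen so $f^{n}I$ has unit length, and using that the unit-length center arcs form a compact family on which $h$ is uniformly bi-Lipschitz, the uniform bounded-distortion estimate for $f^{n}|_{\ell}$ forces the ratios $\mathrm{len}(h(I))/\int_{I}e^{\psi}$ to converge as $\mathrm{len}(I)\to0$ with a limit independent of how $I$ shrinks; this is the displayed identity. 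No use of Journ\'e's theorem is needed here, since smoothness is obtained leaf by leaf.

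The main obstacle is the last, rigidity, step. The bare functional equation, in the normalised coordinates where it reads $\tilde h|_{\tilde f\tilde\ell}(\lambda^{c}(A)\,t)=\lambda^{c}(A)\,\tilde h|_{\tilde\ell}(t)$, is satisfied by self-similar homeomorphisms that are nowhere $C^{1}$ (for instance $t\mapsto t\bigl(1+\tfrac12\sin(2\pi\log_{\lambda^{c}(A)}|t|)\bigr)$), so differentiability cannot be read off a single leaf. What must be exploited is that the functional equation holds \emph{simultaneously} over the whole compact space of center leaves, so that the rescaling used in the bounded-distortion step always returns to a fixed compact family; this is where the hypothesis $E^{c}\in C^{1}$ (uniform $C^{2}$ center leaves and a continuous density $e^{\psi}$) and the hypothesis that $f$ is area expanding (which, via Proposition \ref{small neighborhood} and the preceding Corollary, makes the center uniformly expanding with $C^{2}$ leafwise dynamics of bounded distortion) are both used in an essential way. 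Alternatively, one may invoke that $E^{u}$ is $C^{2}$ (Proposition \ref{small neighborhood}): the unstable holonomies between center leaves are then uniformly $C^{1}$ and, being conjugated by $h$ to the translation-type unstable holonomies of the linear model (whose existence globally uses the global product structure, hence the irrationality of the eigenvalues of $A$), they impose the same rigidity on $h|_{\ell}$.
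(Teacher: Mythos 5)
There is a genuine gap at the decisive step. Your scheme correctly produces the candidate derivative $c(\ell)e^{\psi}$, the exact scaling relation (after the Livschitz normalization $d'(x,y)=\int e^{\psi}$, both $d'$ and $d_{\mathscr{A}^c}\circ h$ are multiplied by exactly $\lambda^c(A)$ under $f$), and the uniform bi-Lipschitz bounds; but the assertion that the uniform bounded-distortion estimate for $f^{n}|_{\ell}$ ``forces the ratios $\mathrm{len}(h(I))/\int_I e^{\psi}$ to converge as $\mathrm{len}(I)\to 0$'' is precisely what is not proved, and bounded distortion cannot prove it: all distortion of $f^{n}$ along center leaves is already absorbed into the metric $d'$, after which the ratio $d_{\mathscr{A}^c}(hx,hy)/d'(x,y)$ is an exactly $f$-invariant quantity, and renormalizing a shrinking arc to unit $d'$-size only evaluates a fixed continuous function along a (generally non-convergent) orbit of unit arcs; recurrence to a compact family of arcs yields the two-sided bounds you already have, not existence of the limit. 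Your own self-similar counterexample shows that the functional equation plus compactness does not exclude a nowhere-differentiable $h|_{\ell}$; the missing input must couple different center leaves, and your one-sentence alternative (``the $C^1$ unstable holonomies impose the same rigidity'') is left unexecuted.

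That alternative is in fact the paper's proof, and it requires two concrete steps you do not supply. First, the unstable holonomy between center leaves is an isometry for $d'$: if $J=hol^u_{x,y}(I)$, then the preimages $I_{-n}=f^{-n}I$ and $J_{-n}=f^{-n}J$ are joined by unstable arcs whose length tends to $0$, so by the uniformly $C^1$ local holonomy along $\mathscr{F}^u$ (from the $C^2$ smoothness of $E^u$, Proposition \ref{small neighborhood}) one gets $|I_{-n}|'/|J_{-n}|'\to 1$, while the exact scaling of $d'$ gives $|I|'/|J|'=|I_{-n}|'/|J_{-n}|'$, hence $|I|'=|J|'$. Second, on the linear side the holonomy along $\mathscr{A}^u$ between $\mathscr{A}^c$-leaves is a Euclidean isometry, and $\mathscr{F}^u$ is minimal (the Denjoy argument in Proposition \ref{invertible}); sliding along $\mathscr{F}^u$ therefore carries the $d'$-midpoint $x_{\frac12}$ of a center arc $[p,x]$ to points converging to $x$, and passing to the limit shows $hx_{\frac12}$ is the Euclidean midpoint of $[hp,hx]$. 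This midpoint rigidity makes $h|_{\ell}$ affine in the $d'$-parametrization, hence $C^1$ with derivative $e^{\psi+c}$ after adjusting $\psi$ by a constant; it also removes the need for your unproved appeal to transitivity of $f$ to make $c(\ell)$ constant.
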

\begin{proof}
We follow the proof of item 6 based on item 5 in Gan and Shi \cite[Theorem 5.1]{Ganshi}. Since there is no stable foliation, we use the minimal unstable foliation. The holonomy along the unstable leaf between center leaves remains isometric under the metric $d'$, i.e., $|I|=|J|$, where $J=hol^u_{x,y}I$.
The reason is that we can consider more and more small center distance between $I_{-n}$ and $J_{-n}$, where $I_{-n}$ is the nth pre-image of the segment $I$, and $J_{-n}$ is the nth pre-image of the segment $J$. Notice that $d'(fx,fy)=\lambda^c(A)d'(x,y)$, then $$\frac{|I|'}{|J|'}=\lim_{n\to +\infty}\frac{|I_{-n}|'}{|J_{-n}|'}=1.$$
\indent The key observation is that $hx_{\frac{1}{2}}$ is the middle point of $hp$ and $hx$ under $d_{\mathscr{A}^c}$ in $\mathscr{A}^c(hp)$, where $x_{\frac{1}{2}}$ is the middle point of any $p$ and $x\in \mathscr{F}^c(p)$ under metric $d'$ in $\mathscr{F}^c(p)$.
Choose $y_n\in \mathscr{F}^u(p)$ such that $y_n\to x_{\frac{1}{2}}$ when $n\to +\infty$. Denote $z_n=hol^u_{p,y_n}(x_{\frac{1}{2}})$ in $\mathscr{F}^c(y_n)$. Since holonomy $hol^u_{p,y_n}$ along $\mathscr{F}^u(p)$ is isometric, $d'(y_n,z_n)=d'(p,x_{\frac{1}{2}})=d'(x_{
\frac{1}{2}},x)$. Thus $y_n\to x_{\frac{1}{2}}$ implies that $z_n\to x$ when $n\to+\infty$. Since $h\mathscr{F}^{\sigma}=\mathscr{A}^{\sigma} (\sigma = c,u)$ are linear foliations, $$d_{\mathscr{A}^c}(hp,hx_{\frac{1}{2}})=d_{\mathscr{A}^c}(hy_n,hz_n)\to d_{\mathscr{A}^c}(hx_{\frac{1}{2}},hx),\quad n\to +\infty.$$ So $d_{\mathscr{A}^c}(hp,hx_{\frac{1}{2}})=d_{\mathscr{A}^c}(hx_{\frac{1}{2}},hx)$.\\
\indent Finally, by choosing suitable constant $c$, we can use $\psi+c$ instead of $\psi$ to ensure that for any $y\in \mathscr{F}^c(x)$, $d_{\mathscr{A}^c}(hx,hy)=d'(x,y)$, where $\psi$ is the H\"older function in the proof of Proposition \ref{equal}. So
$$\frac{d_{\mathscr{A}^c}(hx,hy)}{d_{\mathscr{F}^c}(x,y)}=\frac{d'(x,y)}{d_{\mathscr{F}^c}(x,y)}=
\frac{\int_0^1e^{\psi(\gamma(t))+c}|\dot{\gamma}(t)|dt}{\int_0^1|\dot{\gamma}(t)|dt}\to e^{\psi(x)+c}\quad (y\to x)$$
\end{proof}

Similarly to Proposition \ref{period equal}, Proposition \ref{equal}, and Proposition \ref{c-smooth-1}, if we exchange $\mathscr{F}^c$ and $\mathscr{F}^u$, noting that the local holonomy along $\mathscr{F}^c$ is uniformly $C^1$ due to the $C^1$-smoothness of $E^c$, then we obtain the corresponding conclusions for the unstable direction.

\begin{proposition}\label{u}
Under the assumption of Theorem \ref{main}, if $f$ is area expanding, and $E^c$ is $C^1$, then the conjugacy $h$ is $C^1$ along the unstable foliation.
\end{proposition}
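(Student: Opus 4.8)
The plan is to carry over, mutatis mutandis, the proofs of Proposition~\ref{period equal}, Proposition~\ref{equal} and Proposition~\ref{c-smooth-1}, interchanging the center foliation $\mathscr{F}^c$ with the unstable foliation $\mathscr{F}^u$ and the eigenvalues $\lambda^c_A$ with $\lambda^u_A$. Two structural inputs of those proofs need a replacement. The first --- that the holonomy of the \emph{transverse} foliation is uniformly $C^1$ --- was supplied in the center case by the $C^2$ (hence $C^1$) unstable bundle of Proposition~\ref{small neighborhood}; here it comes directly from the hypothesis that $E^c$ is $C^1$, which makes the holonomy along $\mathscr{F}^c$ uniformly $C^1$ by \cite{PSW}. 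The second --- that the transverse foliation is minimal --- was obtained in the center case from Denjoy's theorem applied to the $C^2$ unstable Poincare map; since $E^c$ is only $C^1$ this route is unavailable, so instead I would use that by Proposition~\ref{invertible} the map $h$ is already a topological conjugacy, and by Proposition~\ref{center foliation} it carries each leaf of $\mathscr{F}^c$ onto a leaf of the irrational linear foliation $\mathscr{A}^c$; since $h$ is a homeomorphism and every leaf of $\mathscr{A}^c$ is dense, every leaf of $\mathscr{F}^c$ is dense, i.e.\ $\mathscr{F}^c$ is a minimal foliation.

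With these substitutions in place, the first step is to show that the unstable periodic data of $f$ is constant, by mirroring Proposition~\ref{period equal}. Assuming $\lambda^u_-:=\inf_{p\in\mathrm{Per}(f)}\lambda^u(p)<\sup_{p\in\mathrm{Per}(f)}\lambda^u(p)=:\lambda^u_+$, I would set $\theta':=\ln|\lambda^c_A|/\ln|\det A|$, fix $\epsilon>0$ small enough that $\lambda^u_+>\lambda^u_-e^{\epsilon(1+2(\theta')^{-1})}$, pass to a metric adapted to the unstable exponents, and choose periodic points $p,q$ with $\lambda^u(p)$ near $\lambda^u_-$ and $\lambda^u(q)$ near $\lambda^u_+$. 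Using the irrationality of the eigenvalues, short center segments $\mathscr{A}^c_L(r)$, $r\in\ZZ^2$, together with their $CL^{-1}$-neighbourhoods cover $\RR^2$; running the construction of Proposition~\ref{period equal} with $\mathscr{A}^u$ replaced by $\mathscr{A}^c$ and pulling back by $\tilde f^{-n}$ with $n\sim\ln L'/\ln|\lambda^c_A|$ (so that $k/n\to\theta'$) produces two pairs of points whose ratio of \emph{unstable} lengths diverges, contradicting the uniform $C^1$ regularity of the $\mathscr{F}^c$-holonomy. Hence $\lambda^u(p)$ equals a common value $\lambda$ for all periodic $p$. Next, following Proposition~\ref{equal}: the cocycle $x\mapsto\ln\|Df|_{E^u_f(x)}\|$ is H\"older (as $E^u$ is $C^2$ by Proposition~\ref{small neighborhood}), and $f$ is an expanding endomorphism (its center is uniformly expanding), so the Livschitz theorem yields a H\"older $\phi$ with $\ln\|Df|_{E^u_f(x)}\|=\phi(x)-\phi(fx)+\ln\lambda$; defining $d''(x,y)=\int_0^1 e^{\phi(\gamma(t))}|\dot\gamma(t)|\,dt$ along the unstable arc from $x$ to $y$ gives a metric comparable to $d_{\mathscr{F}^u}$ with $d''(fx,fy)=\lambda\,d''(x,y)$, and the quasi-isometry of $\tilde{\mathscr{F}}^u$ together with $\tilde h\tilde f=A\tilde h$ and boundedness of $\|\tilde h-\mathrm{id}\|$ squeezes $\lambda$ between $\le\lambda^u(A)$ and $\ge\lambda^u(A)$, so $\lambda=\lambda^u(A)$.

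Finally I would establish the $C^1$ regularity of $h$ along $\mathscr{F}^u$ by mirroring Proposition~\ref{c-smooth-1}. First, the holonomy along $\mathscr{F}^c$ between two unstable leaves is a $d''$-isometry: taking $\tilde f^{-n}$-preimages of an unstable arc $I$ and its $\mathscr{F}^c$-holonomic image $J$, the center distance between their base points contracts like $|\lambda^c_A|^{-n}\to0$, so by uniform $C^1$-ness of the $\mathscr{F}^c$-holonomy $|I_{-n}|''/|J_{-n}|''\to1$, whence $|I|''=|J|''$ using $d''(f\,\cdot,f\,\cdot)=\lambda^u(A)\,d''$. Second, fix a periodic $p$ (say $f(p)=p$) and $x\in\mathscr{F}^u(p)$, let $x_{1/2}$ be the $d''$-midpoint of $p$ and $x$ in $\mathscr{F}^u(p)$; since $\mathscr{F}^c(p)$ is dense I pick $y_n\in\mathscr{F}^c(p)$ with $y_n\to x_{1/2}$, push $x_{1/2}$ along the $\mathscr{F}^c$-holonomy to $z_n\in\mathscr{F}^u(y_n)$ (so $d''(y_n,z_n)=d''(p,x_{1/2})=d''(x_{1/2},x)$, hence $z_n\to x$), and use $h\mathscr{F}^{\sigma}=\mathscr{A}^{\sigma}$ plus linearity of $\mathscr{A}^u$ to get $d_{\mathscr{A}^u}(hp,hx_{1/2})=d_{\mathscr{A}^u}(hx_{1/2},hx)$; thus, after replacing $\phi$ by $\phi+c$ for a suitable constant $c$, $h$ is an isometry from $(\mathscr{F}^u(x),d'')$ onto $(\mathscr{A}^u(hx),d_{\mathscr{A}^u})$ for every $x$, so
\[
\frac{d_{\mathscr{A}^u}(hx,hy)}{d_{\mathscr{F}^u}(x,y)}=\frac{d''(x,y)}{d_{\mathscr{F}^u}(x,y)}\longrightarrow e^{\phi(x)+c}\qquad (y\to x,\ y\in\mathscr{F}^u(x)),
\]
which shows $h$ is $C^1$ along $\mathscr{F}^u$, with derivative the H\"older function $e^{\phi+c}$.

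The main obstacle --- essentially the only point not reducible to a mechanical transcription of the center-direction arguments --- is the minimality of $\mathscr{F}^c$ needed in the midpoint argument of the last step: Denjoy's theorem, which gave it for free in the center case, does not apply because $E^c$ is only $C^1$, so one must instead exploit that $h$ is \emph{already} a topological conjugacy sending $\mathscr{F}^c$ onto the minimal linear foliation $\mathscr{A}^c$. A secondary, purely quantitative nuisance is that the exponent $(\theta')^{-1}=\ln|\det A|/\ln|\lambda^c_A|$ in the first step may be large when $|\lambda^c_A|$ is close to $1$, which only forces the auxiliary $\epsilon$ to be chosen correspondingly small and causes no real difficulty.
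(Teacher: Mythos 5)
Your proposal is correct and follows essentially the paper's own route: the paper proves Proposition~\ref{u} precisely by exchanging $\mathscr{F}^c$ and $\mathscr{F}^u$ (and $\lambda^c_A$ with $\lambda^u_A$) in Propositions~\ref{period equal}, \ref{equal} and \ref{c-smooth-1}, noting that the uniform $C^1$ regularity of the transverse holonomy now comes from the $C^1$-smoothness of $E^c$. Your additional observation that the density of center leaves (needed for the midpoint argument) cannot come from Denjoy but follows from $h$ being a homeomorphism carrying $\mathscr{F}^c$ onto the irrational linear foliation $\mathscr{A}^c$ is a correct way to supply the one ingredient the symmetry alone does not give.
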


Since $E^c$ and $E^u$ are both $C^1$, the leaves of $\mathscr{F}^c$ and $\mathscr{F}^u$ are both $C^2$. The following Proposition \ref{c-smooth-2} guarantees that the conjugacy $h$ is $C^2$ along the foliation $\mathscr{F}^c$ and $\mathscr{F}^u$. By Corollary \ref{Jc}, we can know that the conjugacy $h$ is $C^{1+\alpha}$ for $\alpha\in (0,1)$.

\begin{proposition}\label{c-smooth-2}
Under the assumption of Theorem \ref{main}, if $f$ is area expanding, $E^c$ is $C^1$, and the invariant expanding foliation $\mathscr{F}^{\sigma}(\sigma=c,u)$ is uniformly continuous with $C^r$ leaves, then the conjugacy $h$ is $C^r$ along the foliation $\mathscr{F}^{\sigma}$.
\end{proposition}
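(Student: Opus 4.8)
The plan is to bootstrap, leaf by leaf, the regularity of $h$ from $C^{1}$ — already supplied by Proposition \ref{c-smooth-1} for $\sigma=c$ and by Proposition \ref{u} for $\sigma=u$ — up to $C^{r}$, by identifying the leafwise derivative of $h$ with $e^{\psi^{\sigma}+c^{\sigma}}$ and showing that the Livschitz function $\psi^{\sigma}$ is uniformly $C^{r-1}$ along the leaves of $\mathscr{F}^{\sigma}$. Under the present hypotheses $f$ uniformly expands \emph{both} $\mathscr{F}^{c}$ and $\mathscr{F}^{u}$ (the Corollary after Proposition \ref{equal}, together with $\|Df|_{E^{u}}\|>1$), and $\lambda^{\sigma}(p)=\lambda^{\sigma}_{A}$ for every periodic point $p$ (Proposition \ref{equal} and its $\mathscr{F}^{u}$-analogue). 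Hence by Livschitz there is a H\"older $\psi^{\sigma}:\TT^{2}\to\RR$ with $\ln\|Df|_{E^{\sigma}(x)}\|=\psi^{\sigma}(x)-\psi^{\sigma}(fx)+\ln\lambda^{\sigma}_{A}$, and Proposition \ref{c-smooth-1} (resp. Proposition \ref{u}) shows that, in arc-length coordinates on the leaves, $h$ is $C^{1}$ along $\mathscr{F}^{\sigma}$ with derivative $e^{\psi^{\sigma}+c^{\sigma}}$; so, parametrizing a leaf by arc-length through a $C^{r}$ curve $\gamma$,
$$d_{\mathscr{A}^{\sigma}}\bigl(h\gamma(t),h\gamma(t_{0})\bigr)=\int_{t_{0}}^{t}e^{\psi^{\sigma}(\gamma(\tau))+c^{\sigma}}\,d\tau .$$
Since the $\mathscr{A}^{\sigma}$-leaves are linear, it suffices to prove $\psi^{\sigma}$ is uniformly $C^{r-1}$ along $\mathscr{F}^{\sigma}$-leaves: then the integrand is uniformly $C^{r-1}$ in $t$, the integral uniformly $C^{r}$, i.e. $h$ is uniformly $C^{r}$ along $\mathscr{F}^{\sigma}$.

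To establish this, pass to the universal cover, where $\tilde f$ is a $C^{r}$ diffeomorphism of $\RR^{2}$; put $\tilde g=\tilde f^{-1}$, a $C^{r}$ diffeomorphism with uniformly bounded $C^{r}$-norm that preserves $\tilde{\mathscr{F}}^{\sigma}$ and contracts it by a factor in $[\underline\lambda,\bar\lambda]\subset(0,1)$ at each step in the adapted metric. With $\tilde\beta^{\sigma}=\ln\|D\tilde f|_{\tilde E^{\sigma}}\|-\ln\lambda^{\sigma}_{A}$, iterating $\tilde\psi^{\sigma}(\tilde x)=\tilde\psi^{\sigma}(\tilde g\tilde x)+\tilde\beta^{\sigma}(\tilde g\tilde x)$ and subtracting the same identity at a fixed base point $\tilde x_{0}$ on the same $\tilde{\mathscr{F}}^{\sigma}$-leaf, the contraction kills the remainder $\tilde\psi^{\sigma}(\tilde g^{n}\tilde x)-\tilde\psi^{\sigma}(\tilde g^{n}\tilde x_{0})$ (only leafwise continuity of $\psi^{\sigma}$ is used here), giving
$$\tilde\psi^{\sigma}(\tilde x)-\tilde\psi^{\sigma}(\tilde x_{0})=\sum_{i\ge 1}\Bigl(\tilde\beta^{\sigma}\bigl(\tilde g^{i}\tilde x\bigr)-\tilde\beta^{\sigma}\bigl(\tilde g^{i}\tilde x_{0}\bigr)\Bigr).$$
With $\tilde x=\gamma(t)$ the $i$-th summand is $\tilde\beta^{\sigma}\circ\eta_{i}(t)-\tilde\beta^{\sigma}\circ\eta_{i}(t_{0})$ with $\eta_{i}=\tilde g^{i}\circ\gamma$ a $C^{r}$ curve running inside a single $\tilde{\mathscr{F}}^{\sigma}$-leaf. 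Two estimates finish it: (i) the leafwise $C^{r}$-contraction $\|\eta_{i}^{(k)}\|_{\infty}\le C_{k}\,\bar\lambda^{\,i}$ for $1\le k\le r$, which comes from the bounded $C^{r}$-norm of $\tilde g$ and the domination of the contraction over lower-order terms in the chain-rule recursions (the mechanism of the $C^{r}$ section theorem \cite{HPS}); and (ii) $\tilde\beta^{\sigma}$ is uniformly $C^{r-1}$ along $\tilde{\mathscr{F}}^{\sigma}$-leaves, since $\mathscr{F}^{\sigma}$ has uniformly $C^{r}$ leaves and $Df$ is $C^{r-1}$. Feeding (i) and (ii) into the Fa\`a di Bruno formula, every $t$-derivative of order $1\le k\le r-1$ of the $i$-th summand is bounded by $C_{k}\,\bar\lambda^{\,i}$, uniformly in $t$ and in the leaf, so the series converges in $C^{r-1}$ uniformly over leaves. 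Hence $\psi^{\sigma}$ is uniformly $C^{r-1}$ along $\mathscr{F}^{\sigma}$, and therefore $h$ is uniformly $C^{r}$ along $\mathscr{F}^{\sigma}$; interchanging $\mathscr{F}^{c}$ and $\mathscr{F}^{u}$ handles the other value of $\sigma$, and the argument applies verbatim at every finite order, so also when $r=\infty$.

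The main obstacle is step (i) together with the attendant uniformity bookkeeping: one must check that the $C^{r}$-contraction bounds for the inverse iterates $\tilde g^{i}$ restricted to leaves hold with constants independent of the leaf and uniform along the noncompact leaves, and that at the top order $k=r-1$ the series still converges — here $f$ being only $C^{r}$ yields continuity, not H\"older continuity, of the $(r-1)$-st leafwise derivative of $\tilde\beta^{\sigma}$, which, combined with the geometric factor $\bar\lambda^{\,i}$ from (i), is just enough for a uniformly convergent series of continuous functions. Everything else — the telescoping identity and the identification of the leafwise derivative of $h$ with $e^{\psi^{\sigma}+c^{\sigma}}$ — is immediate from the propositions already established.
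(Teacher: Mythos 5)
Your proposal is correct and follows essentially the same route as the paper: the paper expresses the leafwise derivative of $h$ through the dynamical density $\rho_{x,\bar x}(y)=\prod_{i\ge 1}\|D\tilde f|_{\tilde E^{\sigma}(\tilde f^{-i}\bar x)}\|\,/\,\|D\tilde f|_{\tilde E^{\sigma}(\tilde f^{-i}\bar y)}\|$ (identified via a uniqueness argument), which is exactly the exponential of your telescoping Livschitz series, and proves its leafwise $C^{r-1}$ regularity by the same mechanism you use — inductive geometric bounds on the leafwise derivatives of $\tilde f^{-k}\circ\gamma$ fed into the Fa\`a di Bruno formula to get uniform convergence of the differentiated series. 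Your front-end identification of the leafwise derivative with $e^{\psi^{\sigma}+c^{\sigma}}$ via Propositions \ref{c-smooth-1} and \ref{u} is only a repackaging of the paper's density-uniqueness step, so the two arguments coincide in substance.
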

\begin{proof}
Let $\bar x\in\RR^2$ be the lift of $x\in\mathbb{T}^2$. Then define the dynamical densities $\rho_{x,\bar x}(y)$ with $y\in \mathscr{F}^{\sigma}(x)$
$$\rho_{x,\bar x}(y)=\prod_{i=1}^{+\infty}\frac{||D\tilde{f}|_{\tilde{E}^{\sigma}(\tilde{f}^{-i}\bar x)}||}{||D\tilde{f}|_{\tilde{E}^{\sigma}(\tilde{f}^{-i}\bar y)}||}$$
where $\bar y$ is the lift of $y$ which satisfies $\bar y\in\tilde{\mathscr{F}}^{\sigma}(\bar x)$.\\
\indent $\rho_{x,\bar x}(y)$ is the unique function which satisfies the following requirement.
\begin{enumerate}
\item $\rho_{x,\bar x}(x)=1$, $x\in \mathbb{T}^2$.
\item $\rho_{x,\bar x}(\cdot)$, $x\in \mathbb{T}^2$, are uniformly continuous on $\mathscr{F}_{R}^{\sigma}(x)$ for a fixed $R>0$.
\item $\rho_{fx,\tilde{f}\bar x}(fy)=\frac{||D\tilde{f}|_{\tilde{E}^{\sigma}(\bar x)}||}{||D\tilde{f}|_{\tilde{E}^{\sigma}(\bar y)}||}\rho_{x,\bar x}(y)$ for all $x\in\mathbb{T}^2$ and $y\in \mathscr{F}^{\sigma}(x)$.
\end{enumerate}
Consider $$\tilde{\rho}_{hx,\tilde{h}\bar x}(hy):=\frac{||D\tilde{h}|_{\tilde{E}^{\sigma}(\bar x)}||}{||D\tilde{h}|_{\tilde{E}^{\sigma}(\bar y)}||}\rho_{x,\bar x}(y).$$
Since $h\circ f=A\circ h$, according to Chain rule, $\tilde{\rho}_{hx,\tilde{h}\bar x}(hy)$ satisfies $\tilde{\rho}_{Ahx,A\tilde{h}\bar x}(Ahy)=\tilde{\rho}_{hx,\tilde{h}\bar x}(hy)$. The above three items for $A$ also hold. By uniqueness, $\tilde{\rho}_{hx,\tilde{h}\bar x}(hy)\equiv 1$.
$\rho_{x,\bar x}(y)$ is $C^{r-1}$ about $y$ in $\mathscr{F}^{\sigma}(x)$ since $f$ is $C^r$ and the leaves of foliation are $C^r$. (Referring to \cite[Lemma 4.3]{de la Llave}, and \cite[Lemma 2.3]{G}. We will also give some explanations in the next paragraph.) Combining with
$$||Dh|_{E^{\sigma}(y)}||=||D\tilde{h}|_{\tilde{E}^{\sigma}(\bar y)}||=||D\tilde{h}|_{\tilde{E}^{\sigma}(\bar x)}||\rho_{x,\bar x}(y),$$
we can get that $||Dh|_{E^{\sigma}(y)}||$ is $C^{r-1}$ about $y$ in $\mathscr{F}^{\sigma}(x)$, i.e., $h$ is $C^r$ along the foliaiton.\\
\indent We explain why $\rho_{x,\bar x}(y)$ is $C^{r-1}$ about $y$ in this paragraph.
Choose $\gamma$ such that $\gamma(0)=\bar y,\gamma(t)\subset\tilde{\mathscr{F}}^{\sigma}(\bar x)$, $|\dot{\gamma}(t)|=1$.
Let
$$g(z)=\ln ||D\tilde{f}|_{\tilde{E}(z)}||=\ln \left|{\rm \frac{d}{dt}}\tilde{f}(\gamma(t))\right|,$$ where $z=\gamma(t)$. Denote $u_k(t)=\tilde{f}^{-k}\gamma(t)$.
Suppose $||Df^{-k}|_{E(x)}||\leq C\lambda^k$, $\lambda\in (0,1)$, for all $x\in\TT^2$.
Notice that
$$
{\rm \frac{d}{dt}}u_k(t)=D\tilde{f}^{-k}(\gamma(t))\dot{\gamma}(t)= \prod_{i=0}^{k-1} D\tilde{f}^{-1}(\tilde{f}^{-i}\gamma(t))\cdot \dot{\gamma}(t)
$$
and
$$
\left|\prod_{i=j}^{j+l-1} D\tilde{f}^{-1}(\tilde{f}^{-i}\gamma(t))\right|\leq C\lambda^l,
$$
by using induction for $i$, we can get that
$$\left|{\rm \frac{d^i}{dt^i}}u_{k}(t)\right|\leq C_i\lambda^k(k+1)^{i-1}.$$
More precisely, suppose that the inequality is true when $1\leq i\leq n$, we will prove that the inequality is true when $i=n+1$.
Let $C_i\lambda^k(k+1)^{i-1}\leq D_i$ for all $k\in\NN$. Then for $1\leq j\leq n$,
$$\left|{\rm \frac{d^j}{dt^j}}D\tilde{f}^{-1}(u_i(t))\right|\leq C'\max_{1\leq l\leq n+1}||D^{l}\tilde{f}^{-1}||(\max\{D_1,\dots,D_j\})^j\leq \tilde{C}_n.$$
So
$|{\rm \frac{d^{n+1}}{dt^{n+1}}}u_k(t)|\leq C_{n+1}\lambda^k(k+1)^n$, i.e., the inequality is true for all $i\in\NN$.\\
\indent By Faadi Bruno Formula
$${\rm \frac{d^{r'}}{dt^{r'}}}g\circ u_k(t)=\sum_{b_i\geq 0,\sum ib_i=r',s:=\sum b_i}\frac{r'!}{b_1!\cdots b_{r'}!}g^{(s)}(u_k(t))\prod_{j=1}^{r'} \left(\frac{u_k^{(j)}(t)}{j!}\right)^{b_j}$$
we know that $\left|{\rm \frac{d^{r'}}{dt^{r'}}}g\circ u_k(t)\right|\leq C_{r'}\lambda^k(k+1)^{r'}$, so $\sum_{k=1}^{+\infty}{\rm \frac{d^{r'}}{dt^{r'}}}g\circ u_k(t)$ is uniformly convergent for $r'=1,2,\cdots, r-1$, which implies that $\sum_{k=1}^{+\infty}\left(g\circ u_k(t)-g\circ u_k(0)\right)$ is $C^{r-1}$.
\end{proof}
From the unstable manifold theorem, we know that the leaves of $\mathscr{F}^u$ are $C^r$ if $f$ is $C^r$. Once we prove that the leaves of $\mathscr{F}^c$ are $C^{r-2}$, by Proposition \ref{c-smooth-2} and Corollary \ref{Jc}, we can conclude that the conjugacy $h$ is $C^{r-2-\epsilon}$ for $\epsilon>0$. Moreover, if the leaves of $\mathscr{F}^c$ are $C^{\infty}$, the conjugacy $h$ is $C^{r-3}$ for any $r\in\NN$, thus $h$ is $C^{\infty}$.

\begin{lemma}\label{c1-to-cr}
\cite[Lemma 2.4]{G}
Let $M_1$ and $M_2$ be two one-dimensional complete (not necessary compact) $C^r$ manifolds. Given two $C^r$ expanding diffeomorphisms $g:M_1\to M_1$ and $f:M_2\to M_2$, if $g$ is conjugate to $f$ via a $C^1$ diffeomorphism $h$, then
$h$ is $C^r$.
\end{lemma}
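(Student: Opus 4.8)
The plan is to differentiate the conjugacy equation once, recognise the resulting identity as a cohomological equation for $\log|h'|$ over the backward dynamics $g^{-1}$ (which is uniformly contracting), and bootstrap the regularity of $h$ one derivative at a time. Since $g$ is an expanding diffeomorphism, $M_1$ must be $\RR$ — the circle admits no expanding diffeomorphism, since otherwise $g^2$ would be an orientation-preserving degree-one circle diffeomorphism with $\int\log(g^2)'=0$ and yet $(g^2)'>1$ — and $g$ has a unique fixed point $p$ with $g^{-n}(x)\to p$ exponentially fast, uniformly on compact sets; write $q:=h(p)$, a fixed point of $f$. Differentiating $h\circ g=f\circ h$ gives $h'(gx)\,g'(x)=f'(hx)\,h'(x)$, and since $h$ is a $C^1$ diffeomorphism $h'$ vanishes nowhere; evaluating at $p$ gives $g'(p)=f'(q)$. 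Putting $\Psi:=\log|h'|$ and $\varphi(x):=\log|f'(hx)|-\log|g'(x)|$, the differentiated equation becomes $\Psi\circ g-\Psi=\varphi$, and the relation $g'(p)=f'(q)$ says exactly that $\varphi(p)=0$.

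Next I would invert this equation by telescoping along backward orbits: from $\Psi(x)-\Psi(g^{-1}x)=\varphi(g^{-1}x)$ one gets $\Psi(x)-\Psi(g^{-n}x)=\sum_{k=1}^{n}\varphi(g^{-k}x)$, and letting $n\to\infty$ (using continuity of $\Psi$ at $p$ and $g^{-n}x\to p$),
$$\Psi(x)=\Psi(p)+\sum_{k=1}^{\infty}\varphi\bigl(g^{-k}(x)\bigr),$$
an identity that holds under the sole hypothesis $h\in C^1$. The bootstrap is then: if $h\in C^m$ for some $1\le m\le r-1$, then $\varphi\in C^m$ (here $r\ge m+1$ is used, so $\log|g'|\in C^{r-1}$ and the $C^{r-1}$ map $f'$ is precomposed with $h\in C^m$), while $g^{-1}\in C^r$; the claim is that the series above converges in $C^m$, so that $h'=\pm e^{\Psi}\in C^m$ and hence $h\in C^{m+1}$. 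Starting from $h\in C^1$ and iterating for $m=1,\dots,r-1$ then gives $h\in C^r$. What makes this induction close is that $\varphi$ gains regularity at exactly the same rate as $h$, so the step $C^m\Rightarrow C^{m+1}$ remains available all the way up to $r$; the only quantitative input is that solving the cohomological equation over the contraction $g^{-1}$ does not lose derivatives.

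That last point is where the real work lies, but it is precisely the estimate already carried out inside the proof of Proposition \ref{c-smooth-2}: on a fixed compact set one has $\|D^j(g^{-k})\|\le C_j\,\mu^{-k}(k+1)^{j-1}$ for $1\le j\le m$, with $\mu>1$ bounding the expansion of $g$, and a Fa\`a di Bruno expansion of $D^j(\varphi\circ g^{-k})$ for $1\le j\le m$ shows that every summand carries at least one factor $(g^{-k})^{(i)}$ with $i\ge 1$, hence a factor of size $O(\mu^{-k}\mathrm{poly}(k))$ that dominates the polynomially growing higher derivatives of $\varphi$ and of $g^{-k}$, so $\sum_k D^j(\varphi\circ g^{-k})$ converges uniformly; the $j=0$ term is handled by the Lipschitz estimate $|\varphi(g^{-k}x)|=|\varphi(g^{-k}x)-\varphi(p)|\le C\mu^{-k}$, where $\varphi(p)=0$ is essential. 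I expect the only real obstacle — and it is routine — to be the bookkeeping around non-compactness of $M_1$: since smoothness of $h$ is a local property, one works in a neighbourhood $U$ of a given point, observes that $\overline{\bigcup_{k\ge 0}g^{-k}(U)}$ is compact (the backward orbit accumulates only on $p$), and takes all the sup-norms above over this fixed compact set.
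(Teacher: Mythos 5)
The paper gives no proof of this lemma --- it is imported verbatim from \cite{G} (Lemma 2.4) --- so there is nothing internal to compare line by line. Your argument (reduce to $M_1=\RR$, locate the unique fixed point $p$ of $g$, differentiate $h\circ g=f\circ h$ to get the cohomological equation $\Psi\circ g-\Psi=\varphi$ with $\varphi(p)=0$ because $g'(p)=f'(h(p))$, solve it by telescoping along backward orbits, and bootstrap one derivative at a time using $\|D^j(g^{-k})\|\le C_j\mu^{-k}(k+1)^{j-1}$ on a backward-invariant compact set) is the standard proof of this statement and is essentially the cited one: Gogolev phrases it multiplicatively, writing $h'$ as an infinite product of ratios of derivatives along backward orbits to the common fixed point, which is exactly the exponential of your series; it is also the same machinery this paper uses in Proposition \ref{c-smooth-2}. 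Two small repairs are needed. First, the identity $\int\log\bigl((g^2)'\bigr)=0$ you invoke for a circle diffeomorphism is not an identity (Jensen only gives $\int_{S^1}\log (g^2)'\,dx\le 0$); the correct one-line reason $S^1$ admits no expanding diffeomorphism is that $\int_{S^1}|(g^{2})'|\,dx=\mathrm{length}(S^1)$, which is incompatible with $|(g^{2})'|>1$ everywhere. Second, the existence and uniqueness of the fixed point $p$ deserves a line and uses \emph{uniform} expansion $|g'|\ge\lambda>1$: then $g-\mathrm{id}$ is strictly monotone with derivative bounded away from $0$, hence proper, hence has exactly one zero; with merely pointwise $|g'|>1$ a fixed point can fail to exist (e.g. $x\mapsto x+\arctan x+2$). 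In the setting where the lemma is applied the expansion is uniform, so this is harmless, but it should be stated as part of the hypothesis being used.
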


\begin{proposition}
Under the assumption of Theorem \ref{main}, if $f$ is area expanding, and $E^c$ is $C^1$, then the leaves of center foliation of $f$ are $C^{r-2}$.
\end{proposition}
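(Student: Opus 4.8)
The plan is to upgrade the leaves of $\mathscr F^c$ from being $C^2$ — which is all that $E^c\in C^1$ gives for free — to being $C^{r-2}$, by a one‑dimensional dynamical bootstrap along a single centre leaf, exploiting that through $h$ the map $f$ restricted to a centre leaf is nothing but the affine expansion $A|_{\mathscr A^c}$.

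\emph{Set-up.} First I would record the uniform facts already in hand: the leaves of $\mathscr F^c$ are uniformly $C^2$ and the leaves of $\mathscr F^u$ are $C^r$ (unstable manifold theorem, $E^u$ being $C^2$ by Proposition~\ref{small neighborhood}); by Proposition~\ref{equal} and the adapted metric in which $E^c$ is uniformly expanded (the corollary following Proposition~\ref{c-smooth-1}), $\|Df^n|_{E^c(x)}\|$ is uniformly comparable to $(\lambda^c_A)^n$; and the argument of Proposition~\ref{c-smooth-2}, used with the $C^2$ leaves, gives that $h$ is $C^2$ along $\mathscr F^c$. Passing to the universal cover and letting $e^c_A$ be a unit eigenvector of $A$ with eigenvalue $\lambda^c_A$, I parametrize the centre leaf through $\bar x$ by $\tilde\gamma_{\bar x}(s):=\tilde h^{-1}\!\bigl(\tilde h(\bar x)+s\,e^c_A\bigr)$. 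The family $\{\tilde\gamma_{\bar x}\}_{\bar x\in\RR^2}$ is uniformly $C^2$ on compact parameter intervals (since $\tilde h^{-1}$ is uniformly $C^2$ along centre leaves), and $D\tilde h$ is invertible (it is injective on the spanning pair $E^c$, $E^u$, which it sends into the distinct eigenlines $E^c_A$, $E^u_A$), so $\tilde\gamma_{\bar x}$ is $C^k$ on any arc where $h$ is $C^k$, and — via Proposition~\ref{c-smooth-2} and Journé (Corollary~\ref{Jc}) — essentially conversely; thus the assertion is really a $C^{r-2}$ statement for $h$ in the centre direction beyond what Propositions~\ref{c-smooth-1}--\ref{c-smooth-2} already give. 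The key structural identity is the scaling relation coming from $\tilde h\tilde f=A\tilde h$,
\[
\tilde f\bigl(\tilde\gamma_{\bar x}(s)\bigr)=\tilde\gamma_{\tilde f\bar x}(\lambda^c_A s),\qquad\text{equivalently}\qquad \tilde\gamma_{\bar x}(s)=\tilde f^{\,n}\!\Bigl(\tilde\gamma_{\tilde f^{-n}\bar x}\bigl((\lambda^c_A)^{-n}s\bigr)\Bigr),\quad n\ge 1.
\]

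\emph{The bootstrap.} I would prove, by induction on $j$ with $2\le j\le r-2$, that $\tilde\gamma^{(j)}_{\bar x}$ is uniformly bounded on compact parameter intervals. Differentiating the scaling relation $j$ times and using the higher-order chain rule, the dominant term is $D\tilde f^{\,n}(\cdot)\,\tilde\gamma^{(j)}_{\tilde f^{-n}\bar x}(\cdot)\,(\lambda^c_A)^{-jn}$: its component along the centre direction is magnified only by $\|Df^n|_{E^c}\|\sim(\lambda^c_A)^n$, giving a net factor $(\lambda^c_A)^{(1-j)n}$ that is summable over $n$ because $j\ge 2$; the remaining terms pair $D^m\tilde f^{\,n}$ with $2\le m\le j\le r-2$ — controlled by $f$ being $C^r$ — with products of lower-order derivatives of $\tilde\gamma$, bounded by the inductive hypothesis and carrying a net contraction over $n$. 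Summing the resulting geometric series in $n$ closes the induction, giving $\tilde\gamma_{\bar x}\in C^{r-2}$ uniformly in $\bar x$, whence the leaves of $\mathscr F^c$ are $C^{r-2}$ after projecting to $\TT^2$.

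\emph{Main obstacle.} The real difficulty is that the component of $\tilde\gamma^{(j)}$ \emph{transverse} to the centre leaf is magnified by the full unstable rate $\|Df^n|_{E^u}\|$, which a naive term-by-term bound does not overcome against $(\lambda^c_A)^{-jn}$. Resolving this is the heart of the matter: one must exploit cancellations and the special partial hyperbolicity — essentially that the preimage leaves $\tilde f^{-n}(\mathscr F^c(\bar x))$ stay uniformly $C^2$ and at bounded distance from their straight images, so that the effective contraction governing the transverse terms is $\|Df^n|_{E^c}\|/\|Df^n|_{E^u}\|<1$ rather than $\|Df^n|_{E^u}\|(\lambda^c_A)^{-n}$ — together with the pinning of the centre data to $(\lambda^c_A)^n$ from Proposition~\ref{equal}; the loss of exactly two orders traces back to only second-order information about the leaves being available to start the recursion. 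An essentially equivalent route, which I would keep as a check, feeds the matching centre periodic data into the one-dimensional smooth-conjugacy machinery — Lemma~\ref{c1-to-cr} on periodic centre leaves and the circle-diffeomorphism linearization theorems of the kind recalled in the introduction applied to the unstable Poincare holonomy, whose rotation number is a quadratic irrational and hence Diophantine — obtaining the same $C^{r-2}$ regularity with the same two-order loss.
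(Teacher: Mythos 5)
There is a genuine gap, and you in fact name it yourself: in the proposed induction the transverse component of $\tilde\gamma^{(j)}_{\bar x}$ is hit by $D\tilde f^{\,n}$ at the full unstable rate, and the factor $(\lambda^c_A)^{-jn}$ does not beat it. Your suggested fix (``cancellations'' plus the claim that the effective rate is $\|Df^n|_{E^c}\|/\|Df^n|_{E^u}\|$) is not an argument: that ratio governs the convergence of the tangent directions, i.e.\ the $C^1$ regularity of $E^c$, not the higher-order derivatives of a leaf parametrization; for a slow expanding direction the regularity of leaves is in general limited by a bunching ratio between the two rates, so no term-by-term estimate of the kind you describe can yield $C^{r-2}$ --- the rigidity has to enter through a different mechanism than the scaling relation alone. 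Your one-sentence backup route gestures in the right direction (Diophantine rotation number of the unstable Poincar\'e map, Lemma \ref{c1-to-cr}) but omits everything that makes it work, and as stated it contains a circularity: applying Lemma \ref{c1-to-cr} to $f$ restricted to a periodic centre leaf presupposes that $f$ restricted to that leaf is $C^r$, which is exactly the regularity of the leaf you are trying to prove.

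The paper's proof is built differently. One takes a smooth closed transversal $C$ to $\mathscr F^u$, uses the metric $d^u$ on unstable leaves under which centre holonomy is an isometry (from the proof of Proposition \ref{c-smooth-1}) to define $\Phi_{x_0}(x)=d^u(x,z)$ with $z=\mathscr F^u(x)\cap\mathscr F^c(x_0)$, and shows that $\phi=\frac{d}{dx}\Phi_{x_0}$ satisfies the cohomological equation $\phi(R(x))-\phi(x)+B(x)=0$ over the unstable Poincar\'e map $R$, where $B$ is $C^{r-1}$ because $\mathscr F^u$ is $C^r$. The crucial smoothness of the change of coordinates conjugating $R$ to the rotation $R_\theta$ is obtained by applying Lemma \ref{c1-to-cr} not to a centre leaf but to the $C^r$ expanding one-dimensional maps $\hat A$ and $\hat{\hat f}=\pi^u_{\tilde f}\circ\tilde f$ obtained by quotienting along the unstable foliations (whose $C^r$ regularity is available), so no circularity arises. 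Since $\theta$ is a quadratic irrational, Liouville's theorem gives a Diophantine condition, and the regularity theory for cohomological equations over Diophantine rotations yields $\phi\in C^{r-3}$, hence $\Phi_{x_0}\in C^{r-2}$, i.e.\ the centre leaves are $C^{r-2}$; in particular the two-derivative loss comes from the small-divisor step and the passage from $\phi$ to $\Phi$, not, as you suggest, from ``starting the recursion with only second-order information.''
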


\begin{proof}
We apply the method developed in \cite{G} to the endomorphism on $\mathbb{T}^2$.\\
\indent There is a metric $d^u$ of unstable leaves, under which the holonomy along a center leaf between unstable leaves is isometric. The existence of $d^u$ refers to the first paragraph in the proof of Proposition \ref{c-smooth-1}.
Choose a smooth simple closed curve $C$ in $\mathbb{T}^2$ which is transverse to $\mathscr{F}^u$.
Fix a point $x_0$ in $C$, and see Figure \ref{Phi-def}.
We define the function $\Phi_{x_0}(x)=d^{u}(x,z)$ on $C$, where $z=\mathscr{F}^u(x)\cap\mathscr{F}^c(x_0)$.
Since $\mathscr{F}^c(x_0)$ is $C^2$ and $E^u$ is $C^2$, $\Phi_{x_0}(x)$ is $C^1$ about $x\in C$.
For $x'\in C, x_1\in C$, denote $$z'=\mathscr{F}^u(x')\cap\mathscr{F}^c(x_0), \quad z_1=\mathscr{F}^u(x)\cap\mathscr{F}^c(x_1), \quad
z_1'=\mathscr{F}^u(x')\cap\mathscr{F}^c(x_1).$$
Since $d^u(z,z_1)=d^u(z',z_1')$, $\Phi_{x_0}(x)-\Phi_{x_0}(x')$ is independent of $x_0$. Then we can define continuous function $\phi(x)=\frac{d}{dx}\Phi_{x_0}(x)$, which is independent of $x_0$ in $C$.
\begin{figure}[H]
  \vspace{-2cm}
  \centering
  \begin{minipage}[t]{0.41\linewidth}
    \includegraphics[width=0.9\linewidth]{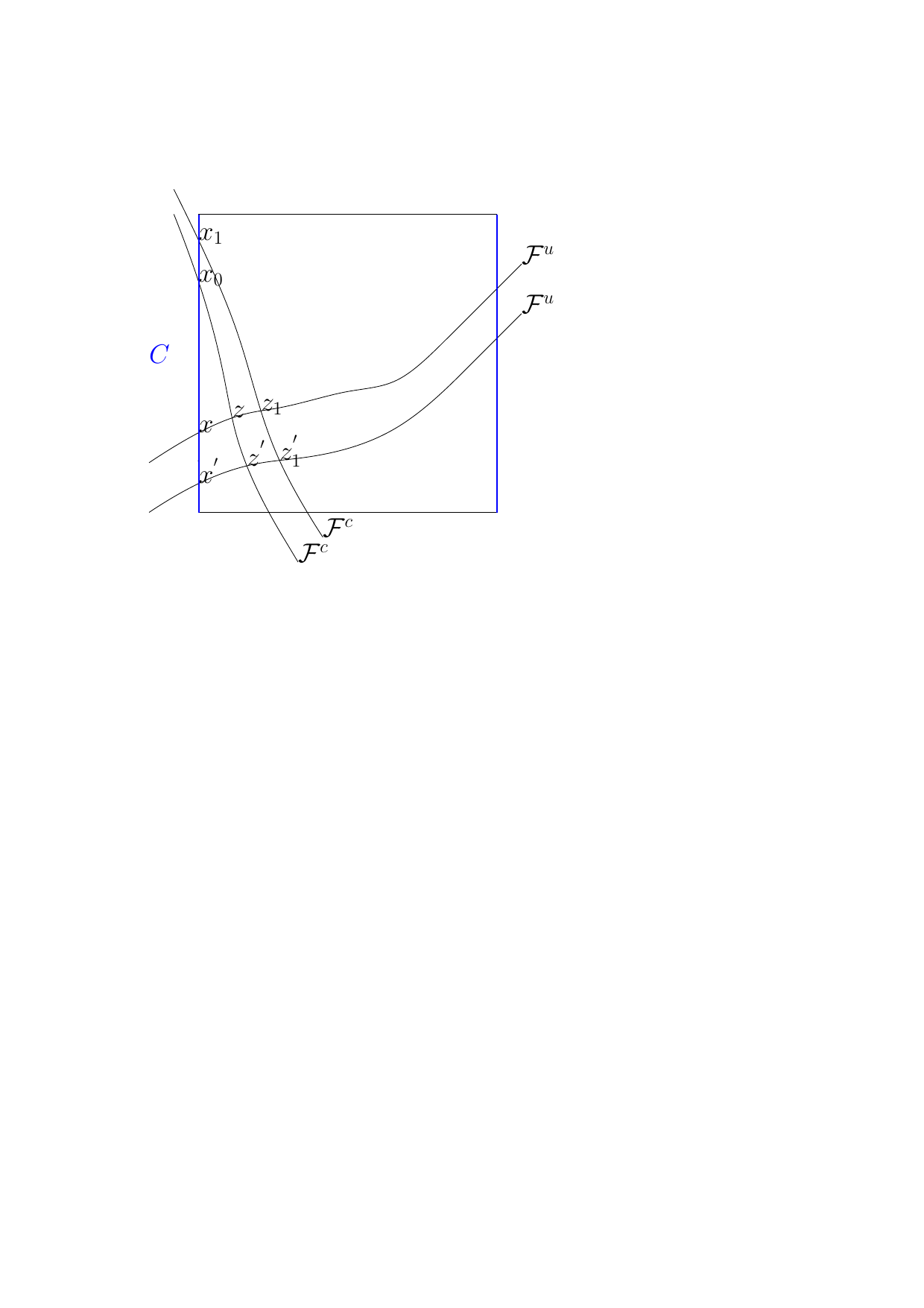}
    \caption{Function $\Phi_{x_0}(x)$}
    \label{Phi-def}
  \end{minipage}
  \hfill
  \begin{minipage}[t]{0.58\linewidth}
    \includegraphics[width=0.9\linewidth]{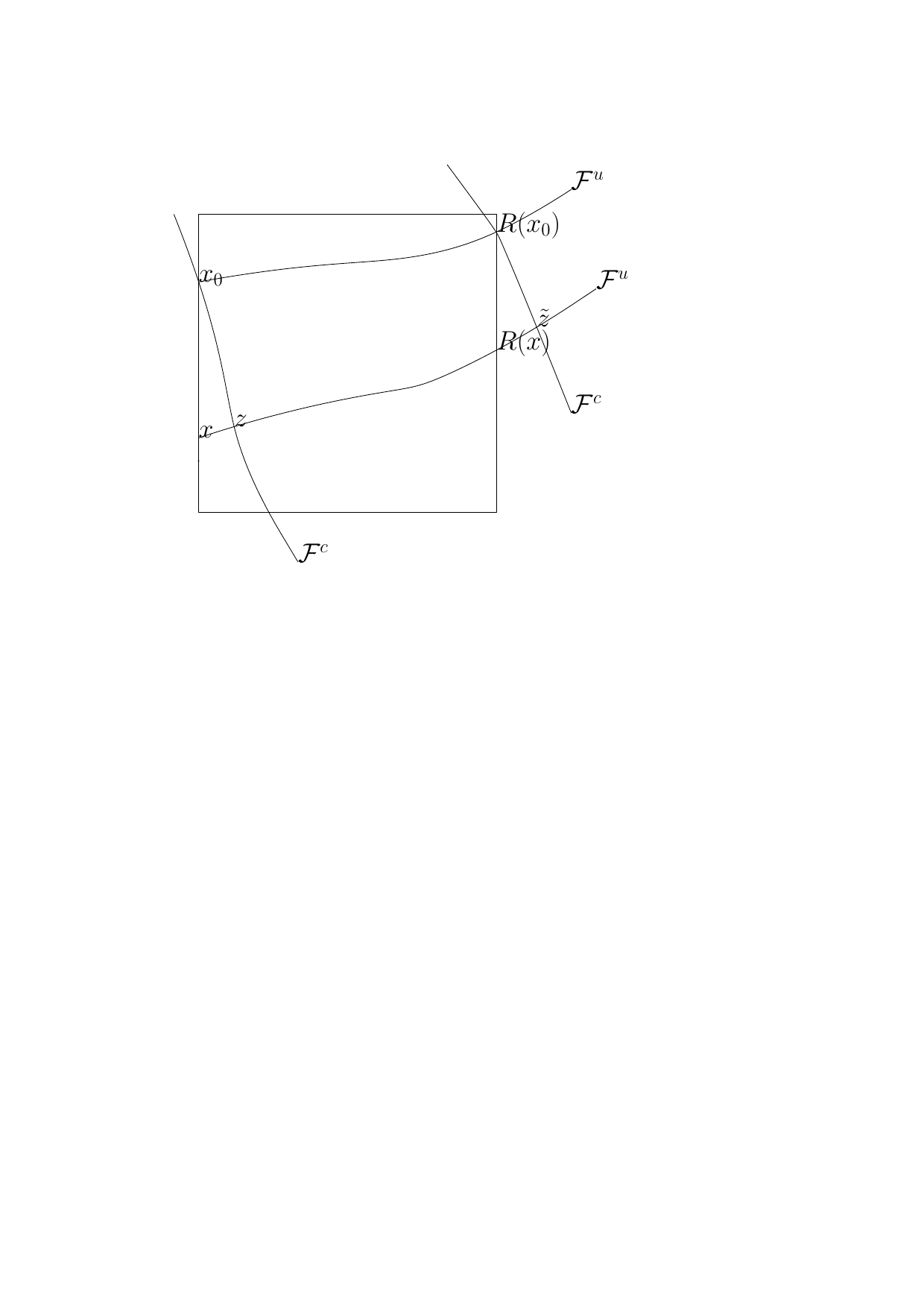}
    \caption{Cohomology relationship}
    \label{Return-map}
  \end{minipage}
\end{figure}
Consider the Poincare map $R:C\to C$ which is induced by $\mathscr{F}^u$.
Observing Figure \ref{Return-map}, we can get the following equation.
\begin{equation*}
\begin{split}
d^u(x_0,R(x_0))&=d^u(z,\tilde{z})\\
&=d^u(x,R(x))+d^u(R(x),\tilde{z})-d^u(x,z)\\
&=d^u(x,R(x))+\Phi_{R(x_0)}(R(x))-\Phi_{x_0}(x),
\end{split}
\end{equation*}
where $z=\mathscr{F}^u(x)\cap\mathscr{F}^c(x_0)$ and $\tilde{z}=\mathscr{F}^u(R(x))\cap\mathscr{F}^c(R(x_0))$.
Take the derivative with respect to $x$ in the above equation. If $R$ is isometric, we can get that $$\phi(R(x))-\phi(x)+B(x)=0,$$ where $B(x)={\rm \frac{d}{dx}}d^u(x,R(x))$. From unstable manifold theorem, we know that $\mathscr{F}^u$ is $C^r$, thus $B(x)$ is $C^{r-1}$.\\
\indent Denote by $\pi^{u}_f$ the holonomy along unstable foliation of $f$, and $r=h^{-1}$. Denote $\bar r=\pi^{u}_f\circ r:C\to C$. Let $R_{\theta}$ be the Poincare map on $C$ induced by $\mathscr{A}^u$, and the rotation number is $\theta$. Since $r\mathscr{A}^u=\mathscr{F}^u$, for $x\in C$, we have
$$R\circ\bar r(x)=\mathscr{F}^u(rx)\cap C=r(\mathscr{A}^u(x))\cap C=\bar r(\mathscr{A}^u(x)\cap C)=\bar r\circ R_{\theta}(x).$$
Consider the metric $(\bar r^{-1})^{*}g(u,v)=g(d\bar r^{-1}u,d\bar r^{-1}v)$ on $C$, where $g$ is standard flat metric on $C$. We have that $R$ is isometric on $(C, (\bar r^{-1})^{*}g)$, since
\begin{align*}
\begin{split}
(\bar r^{-1})^{*}g(dRu,dRv)&=g(d\bar r^{-1}dRu,d\bar r^{-1}dRv)\\
&=g(d\bar r^{-1}dRd\bar r(d\bar r^{-1}u),d\bar r^{-1}dRd\bar r(d\bar r^{-1}v))\\
&=g(dR_{\theta}(d\bar r^{-1}u),dR_{\theta}(d\bar r^{-1}v))\\
&=g(d\bar r^{-1}u,d\bar r^{-1}v)=(\bar r^{-1})^{*}g(u,v).
\end{split}
\end{align*}
\indent
Let $\tilde{C}$ be the lift of $C$ on $\RR^2$. Denote by $\tilde{A}$ the lift of $A$.
Denote by $\tilde{f}$ and $\tilde{r}$ the lift of $f$ and $r$ respectively. Denote $\hat{A}=\pi^{u}_A\circ \tilde{A}:\tilde{C}\to \tilde{C}$ and $\hat{f}=\pi^{u}_{\tilde{f}}\circ \tilde{f}:\tilde{C}\to \tilde{C}$. Denote $\hat{\hat{f}}=\pi^{u}_{\tilde{f}}\circ\tilde{f}:\mathscr{A}^c(0)\to \mathscr{A}^c(0)$.
$\tilde{r}$ induces the quotient map $\hat{r}: \RR^2/\tilde{\mathscr{A}}^u\to  \RR^2/\tilde{\mathscr{F}}^u$, which can be seen as $\hat{r}:\tilde{C}\to \tilde{C}$. Denote $\hat{\hat{r}}=\pi^{u}_{\tilde{f}}\circ\hat{r}:\tilde{C}\to \tilde{\mathscr{A}}^c(0)$.
Since
$\hat{\hat{f}}=\pi^{u}_{\tilde{f}}\circ(\tilde{f}|_{\mathscr{F}^c(\tilde{r}0)})\circ\pi^{u}_{\tilde{f}}$,  $\hat{\hat{f}}$ is conjugate with $\tilde{f}|_{\mathscr{F}^c(\tilde{r}0)}$ by $\pi^{u}_{\tilde{f}}$. By definition, $\hat{r}$ is the lift of $\bar r:C\to C$.
Since $\tilde{r}$ is the conjugacy between $\tilde{A}$ and $\tilde{f}$, $\hat{r}$ is the conjugacy between $\hat{A}$ and $\hat{f}$. $\hat{\hat{r}}$ is the conjugacy between $\hat{A}$ and $\hat{\hat{f}}$.
The relationship between these maps can be seen in Figure 4.
\begin{equation*}
\xymatrix@C=2pc@R=2pc{
&\tilde{\mathscr{A}}^c(0)\circlearrowleft^{\tilde{A}}\ar[r]^{\tilde{r}}\ar[d]
&\tilde{\mathscr{F}}^c(\tilde{r}0)\circlearrowleft^{\tilde{f}}\ar[d]\ar[r]^{\pi^u_{\tilde{f}}}
&\tilde{\mathscr{A}}^c(0)\circlearrowleft^{\hat{\hat{f}}}\\
\tilde{C}\circlearrowleft^{\hat{A}}\ar[rrru]^{\hat{\hat{r}}}\ar@/_/[rrr]_{\hat{r}}\ar[d]
&\RR^2\circlearrowleft^{\tilde{A}}\ar[r]^{\tilde{r}}\ar[l]\ar[d]
&\RR^2\circlearrowleft^{\tilde{f}}\ar[r]\ar[d]
&\tilde{C}\circlearrowleft^{\hat{f}}\ar[u]_{\pi^{u}_{\tilde{f}}}\ar[d]\\
C\ar@/_/[rrr]_{\bar r}\ar[r]&\TT^2\circlearrowleft^{A}\ar[r]^r&\TT^2\circlearrowleft^{f}&C\ar[l]
}
\end{equation*}

\begin{center}
Figure 4: the relationships between maps.
\end{center}
\indent Since $f$ is $C^r$, $\tilde{f}$ and $\mathscr{F}^u$ are $C^r$, furthermore $\pi^u_{\tilde{f}}$ is $C^r$. So $\hat{\hat{f}}$ is $C^r$, and uniformly $C^1$ conjugate with $\tilde{f}|_{\tilde{\mathscr{F}}^c(\tilde{r}0)}$. Thus $\hat{\hat{f}}$ is $C^r$ expanding diffeomorphism. Since $h$ is $C^{1+\alpha}$, $r$ is $C^1$, by definition of $\hat{\hat{r}}$, $\hat{\hat{r}}:\tilde{C}\to \tilde{\mathscr{A}}^c(0)$ is also $C^1$. Notice that $\tilde{C}$ and $\tilde{\mathscr{A}}^c(0)$ are $C^r$ one-dimensional complete manifold, $\hat{A}$ and $\hat{\hat{f}}$ are two $C^r$ expanding diffeomorphisms which are conjugate by $C^1$ diffeomorphism $\hat{\hat{r}}$. By Lemma \ref{c1-to-cr}, $\hat{\hat{r}}$ is $C^r$.
Since $\pi^u_{\tilde{f}}$ is $C^r$, $\hat{r}$ is $C^r$. Thus the base map $\bar r$ of $\hat{r}$ is also $C^r$.\\
\indent Denote $\tilde{\phi}=\phi\circ \bar r$ and $\tilde{B}=B\circ \bar r$. Then $\phi(R(x))-\phi(x)+B(x)=0$ implies that $$
\tilde{\phi}(R_{\theta}(x))-\tilde{\phi}(x)+\tilde{B}(x)=0,
$$
where $\tilde{B}$ is $C^{r-1}$, and $R_{\theta}$ is the Poincare map on $C$ induced by $\mathscr{A}^u$.
$(1,\theta)$ is the eigenvector of $A$, by Liouville's Theorem, so there is a constant $C$ such that
$$|p\theta-q|>\frac{C}{|p|}, \quad \forall p\neq 0\in\ZZ, \quad \forall q\in\ZZ.$$
Thus $\tilde{\phi}$ is $C^{r-3}$ as the continuous solution of cohomological equation \cite[section 2.5]{G}. So $\phi$ is $C^{r-3}$, $\Phi_{x_0}$ is $C^{r-2}$, i.e., the leaves of the center foliation of $f$ are $C^{r-2}$.
\end{proof}

\subsection{Analytic conjugacy}
\begin{lemma}\label{spectral condition}
For $A\in GL(2,\RR)\cap M_2(\ZZ)$ with two irrational eigenvalue $\lambda$ and $\mu$, neither $\lambda=\mu^k$ or $\mu=\lambda^k$ holds when $k\in\NN$.
\end{lemma}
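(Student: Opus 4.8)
The plan is to exploit the fact that the irrationality of a single eigenvalue forces the characteristic polynomial of $A$ to be irreducible over $\QQ$, so that $\lambda$ and $\mu$ become Galois conjugates; then one application of Galois conjugation collapses the relation $\lambda=\mu^{k}$ into an identity that can only be satisfied by a root of unity.

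First I would fix notation: let $p(x)=x^{2}-tx+d$ be the characteristic polynomial of $A$, with $t=\operatorname{tr}A\in\ZZ$ and $d=\det A\in\ZZ$; since $A\in GL(2,\RR)$ we have $d\neq 0$, hence $\lambda\mu=d\neq 0$ and $\lambda+\mu=t$. Because $\lambda$ is irrational, $p$ has no rational root and, being quadratic, is therefore irreducible over $\QQ$. Consequently $\QQ(\lambda)=\QQ(\mu)$ is a quadratic Galois extension of $\QQ$ whose nontrivial automorphism $\sigma$ interchanges the two roots: $\sigma(\lambda)=\mu$ and $\sigma(\mu)=t-\sigma(\lambda)=\lambda$. (In particular $\mu=t-\lambda$ is irrational as well, so the hypothesis on $\mu$ is automatic, and the statement is symmetric in $\lambda\leftrightarrow\mu$.)

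Next, suppose $\lambda=\mu^{k}$ for some $k\in\NN$. Applying $\sigma$ yields $\mu=\lambda^{k}$, and substituting back gives $\lambda=\mu^{k}=\lambda^{k^{2}}$; since $\lambda\neq 0$, this forces $\lambda^{k^{2}-1}=1$. If $k\geq 2$ then $k^{2}-1\geq 3$, so $\lambda$ is a root of unity, and being real it must equal $\pm 1\in\QQ$, contradicting irrationality. If $k=1$ then $\lambda=\mu$ makes $\lambda$ a double root of $p$, whence $\lambda=t/2\in\QQ$, again a contradiction. The relation $\mu=\lambda^{k}$ is handled identically (apply $\sigma$ and repeat), which finishes the proof.

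The only step that requires care — and the closest thing to an obstacle — is the passage from $\lambda^{k^{2}-1}=1$ to a contradiction: this relation is also satisfied by non-real roots of unity (for instance $i$, an eigenvalue of $\left(\begin{smallmatrix}0&-1\\1&0\end{smallmatrix}\right)$), so one genuinely uses that the eigenvalues here are real. In the setting where the lemma is applied the eigenvalues moreover satisfy $|\lambda|,|\mu|>1$, which already rules out $\lambda^{k^{2}-1}=1$ outright; so in that case even this last subtlety disappears, and the argument above is the whole story.
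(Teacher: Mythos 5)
Your argument is correct, but it takes a genuinely different route from the paper. The paper works directly with the explicit formulas $\tfrac{1}{2}(T\pm\sqrt{\Delta})$, $\Delta=T^2-4D$, expands $\bigl(\tfrac{1}{2}(T\mp\sqrt{\Delta})\bigr)^k$ by the binomial theorem, separates the rational part from the coefficient of $\sqrt{\Delta}$ (legitimate since $\sqrt{\Delta}\notin\QQ$), and derives a sign contradiction: the coefficient equation forces $T<0$ and $k$ even, which makes the two sides of the rational-part equation have opposite signs. You instead observe that irrationality forces the characteristic polynomial to be irreducible, so the nontrivial automorphism of $\QQ(\lambda)$ swaps $\lambda$ and $\mu$; applying it to $\lambda=\mu^k$ gives $\mu=\lambda^k$, hence $\lambda^{k^2-1}=1$, and real roots of unity are $\pm1\in\QQ$ (with $k=1$ handled via the double-root observation). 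Your proof is shorter and more conceptual, and it generalizes beyond the quadratic case, whereas the paper's computation is elementary and self-contained. You are also right to flag where reality of the eigenvalues enters: the paper uses it just as tacitly, since its sign argument needs $\Delta^i>0$, i.e.\ $\Delta>0$; indeed the statement fails verbatim for non-real eigenvalues (e.g.\ $\pm i$ satisfy $\lambda=\mu^3$), so both proofs rely on the real (in fact expanding, $|\lambda|,|\mu|>1$) setting in which the lemma is applied, and your last remark makes this explicit rather than leaving it implicit.
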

\begin{proof}
Denote $T={\rm tr}(A)$ and $D=\det (A)$. Then the eigenvalues of $A$ are $\frac{1}{2}(T+\sqrt{\Delta})$ and $\frac{1}{2}(T-\sqrt{\Delta})$, where $\Delta= T^2-4D$. Suppose that $k\in\NN$ satisfies
$\frac{1}{2}(T+\sqrt{\Delta})=(\frac{1}{2}(T-\sqrt{\Delta}))^k$ or $\frac{1}{2}(T-\sqrt{\Delta})=(\frac{1}{2}(T+\sqrt{\Delta}))^k$.
Since the eigenvalues of $A$ are irrational, we can get the following equations:
$$\frac{T}{2}=2^{-k}\sum_{i=0}^{[\frac{k}{2}]}C^{2i}_{k}T^{k-2i}\Delta^i$$
and
$$\frac{1}{2}=-2^{-k}\sum_{i=0}^{[\frac{k-1}{2}]}C^{2i+1}_{k}T^{k-1-2i}\Delta^i.$$
The second equation implies that $T<0$ and $k$ is even. However, these means that the left side of the first equation is negative, but the right side of it is positive, which lead to a contradiction.
\end{proof}

\indent For $f$ in the Main theorem, consider $\tilde{f}^{-1}$, the inverse of the lift of $f$. If $f$ is $C^{\omega}$, then by Lemma \ref{spectral condition}, $\tilde{f}^{-1}$ satisfies the assumption of \cite[Theorem 2.2]{CFD}.
For any $p\in \TT^2$, there exist a neighborhood $U$ of $p$ and a homeomorphism $\theta: U\to {\rm Emb^{\omega}}([0,1], \TT^2)$, such that $\theta(x)[0,1]=\mathscr{F}^{\sigma}_{\epsilon}(x)$ and $\theta(x)(0)=x$, where ${\rm Emb^{\omega}}$ is the space of analytic embedding.
In particular, the leaves of $\mathscr{F}^c$ and $\mathscr{F}^u$ are analytic.

\begin{theorem}\label{conjugacy analytic}
Under the assumption of Theorem \ref{main}, $r=\omega$, if $f$ is area expanding, $E^c$ is $C^1$,
and $\mathscr{F}^{\sigma}$ is an expanding foliation with analytic leaves, then $h$ is analytic along $\mathscr{F}^{\sigma}$.
\end{theorem}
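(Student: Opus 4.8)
The plan is to run the dynamical-density argument of Proposition \ref{c-smooth-2} in the analytic category. By Propositions \ref{c-smooth-1} and \ref{u} the conjugacy $h$ is already $C^1$ along $\mathscr{F}^{\sigma}$, and by Propositions \ref{center foliation} and \ref{unstable foliation} it carries each leaf of $\mathscr{F}^{\sigma}$ onto a leaf of the linear foliation $\mathscr{A}^{\sigma}$; since the leaves of $\mathscr{F}^{\sigma}$ are analytic (by hypothesis, cf.\ \cite{CFD}) and $f$ is analytic, $f$ restricted to leaves is an analytic one-dimensional expanding map, while $A|_{\mathscr{A}^{\sigma}}$ is linear, hence analytic. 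The identity obtained in the proof of Proposition \ref{c-smooth-2},
\[
||Dh|_{E^{\sigma}(y)}||=||D\tilde h|_{\tilde E^{\sigma}(\bar x)}||\cdot\rho_{x,\bar x}(y),\qquad
\rho_{x,\bar x}(y)=\prod_{i=1}^{+\infty}\frac{||D\tilde f|_{\tilde E^{\sigma}(\tilde f^{-i}\bar x)}||}{||D\tilde f|_{\tilde E^{\sigma}(\tilde f^{-i}\bar y)}||},
\]
valid for a lift $\bar x$ of $x$ and $y\in\mathscr{F}^{\sigma}(x)$ with lift $\bar y\in\tilde{\mathscr{F}}^{\sigma}(\bar x)$, was deduced only from $\tilde h\tilde f=A\tilde h$ together with the uniqueness of the density $\rho$, so it stays valid here. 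Hence it suffices to show that, for fixed $x$, the function $y\mapsto\rho_{x,\bar x}(y)$ is analytic along $\mathscr{F}^{\sigma}(x)$, with a radius of analyticity bounded below independently of $x$; integrating along the leaves then yields that $h$ is uniformly analytic along $\mathscr{F}^{\sigma}$.

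First I would parametrize $\tilde{\mathscr{F}}^{\sigma}(\bar x)$ by an analytic arc $\gamma$ with $\gamma(0)=\bar x$ — available with a uniform domain from the analytic foliation charts of \cite{CFD} quoted before the theorem — set $g(z)=\ln||D\tilde f|_{\tilde E^{\sigma}(z)}||$, and write
\[
\ln\rho_{x,\bar x}(\gamma(t))=\sum_{i=1}^{+\infty}\bigl(g(\tilde f^{-i}\gamma(0))-g(\tilde f^{-i}\gamma(t))\bigr).
\]
Each summand is real-analytic in $t$: the curve $\tilde f^{-i}\circ\gamma$ parametrizes a segment of the analytic leaf $\tilde{\mathscr{F}}^{\sigma}(\tilde f^{-i}\bar x)$ (and is in any case a composition of analytic maps), its tangent field is analytic in $t$, and along a leaf $g$ is the logarithm of the norm of the image of that tangent under the analytic map $D\tilde f$; note in particular that one never needs $E^{\sigma}$ to be analytic transversally, only that the leaves are analytic. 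Because $\mathscr{F}^{\sigma}$ is an expanding foliation, $\tilde f^{-1}$ contracts leaves, so $|(\tilde f^{-i}\circ\gamma)'(t)|\le C\lambda^{i}$ for some $\lambda\in(0,1)$ on the real interval, whence $|g(\tilde f^{-i}\gamma(0))-g(\tilde f^{-i}\gamma(t))|=O(\lambda^{i})$ and the series converges uniformly there, exactly as in Proposition \ref{c-smooth-2}.

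The main obstacle is to carry the derivative estimates in the proof of Proposition \ref{c-smooth-2} into the analytic category. There the bound $\bigl|\frac{d^{r'}}{dt^{r'}}(g\circ\tilde f^{-i}\circ\gamma)\bigr|\le C_{r'}\lambda^{i}(i+1)^{r'}$ was only polynomial in the order $r'$; for $r=\omega$ one must instead produce uniform Gevrey-$1$ bounds — factorial in $r'$ and still geometric in $i$ — so that the series defining $\ln\rho_{x,\bar x}\circ\gamma$ has factorial-type derivative bounds and is therefore real-analytic with a radius independent of $x$. These follow by rerunning the same Fa\`{a} di Bruno induction with the Cauchy estimates $||D^{l}\tilde f^{-1}||\le K\,l!\,R^{-l}$, available since $f$ is analytic on the compact torus (so $\tilde f^{-1}$ extends holomorphically to a fixed-width complex neighbourhood of $\RR^2$, equivariantly under $\ZZ^2$), in place of the merely polynomial bounds used before; keeping the constants under control so that the factor $\lambda^{i}$ persists and the $r'$-dependence stays factorial is the delicate bookkeeping, but it is routine. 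Equivalently one may complexify $\gamma$ and $\tilde f$ and prove uniform holomorphic convergence of the product $\rho_{x,\bar x}$ directly on a complex strip $\{|{\rm Im}\,t|<\delta\}$; and either route can be repackaged as the analytic analogue of Lemma \ref{c1-to-cr} — a $C^1$ conjugacy between $C^{\omega}$ expanding diffeomorphisms of one-dimensional manifolds is $C^{\omega}$ — applied leaf by leaf. Granting this, integrating along the leaves of $\mathscr{F}^{\sigma}$ shows that $h$ is uniformly analytic along $\mathscr{F}^{\sigma}$, as asserted.
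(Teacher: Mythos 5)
Your complexification route is, in spirit, the paper's own idea, but your write-up defers the entire analytic content of the theorem as ``routine bookkeeping'', and the two concrete mechanisms you offer in its place are both problematic. First, rerunning the Fa\`a di Bruno induction of Proposition \ref{c-smooth-2} with Cauchy estimates $\|D^{l}\tilde f^{-1}\|\le K\,l!\,R^{-l}$ does not obviously give Gevrey-$1$ bounds: in that induction the constant at order $n+1$ involves $(\max\{D_1,\dots,D_j\})^{j}$, i.e.\ powers of all lower-order bounds, and keeping the constants factorial in the order while retaining the geometric factor $\lambda^{i}$ is precisely the kind of combinatorial control that is not routine (the paper never attempts it; it complexifies instead). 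Second, an ``analytic analogue of Lemma \ref{c1-to-cr} applied leaf by leaf'' does not parse: $f$ maps $\mathscr{F}^{\sigma}(x)$ to $\mathscr{F}^{\sigma}(fx)$, so a generic leaf carries no expanding self-map to which such a rigidity statement could be applied; only periodic leaves do.

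That is in fact how the paper argues: it fixes the leaf through a fixed point of $f$, where the leaf dynamics is a single analytic expanding map $\Psi$ of $\RR$, complexifies it to $\Phi$ on a strip, and shows that $F_n(z)=\prod_{i=1}^{n}\Psi'(\Phi^{-i}z)/\lambda^{\sigma}$ is uniformly bounded (Livschitz theorem for the bound on the real axis, a Lipschitz estimate on $\ln\Phi'$ to pass to the strip), pointwise and then locally uniformly convergent, hence analytic by Morera. The remaining, equally essential, step --- which your sketch replaces by the unproved assertion of ``a radius of analyticity bounded below independently of $x$'' --- is the propagation to all other leaves: the paper proves by a compactness argument ($R(y)\ge R(x)$ on sets $B(x)$, finitely many of which cover $\TT^2$) that the strip width $\xi$ is uniform, and then uses density of the fixed-point leaf together with uniform boundedness, equicontinuity, Ascoli and Morera to conclude that $\|Dh|_{E^{\sigma}(\cdot)}\|$ is analytic along every leaf. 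If you insist instead on your direct per-leaf argument via $\rho_{x,\bar x}$, the same missing uniformity reappears in a harder form: the iterates $\tilde f^{-i}$ carry the chosen leaf through infinitely many different leaves, so you must control holomorphic extensions of $\tilde f^{-i}\circ\gamma$ on a strip of uniform width with geometrically decaying bounds, noting that the contraction estimate is only available at real points and that $g$ involves a modulus, so the complexification must be set up for the product itself rather than for $\ln|\cdot|$. As it stands, the proposal names the right objects but omits the steps that constitute the proof.
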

\begin{proof}
Suppose that $x$ is a fixed point of $f$, i.e., $f(x)=x$. Choose a kind of parameter $\gamma:\RR\to \mathscr{F}^{\sigma}(x)$ such that $\gamma(0)=x$ and $|\dot{\gamma}(t)|=1$ for all $t\in \RR$. Since $f\mathscr{F}^{\sigma}(x)=\mathscr{F}^{\sigma}(fx)=\mathscr{F}^{\sigma}(x)$, there is an analytic function $\Psi:\RR\to\RR$ such that $f(\gamma(t))=\gamma(\Psi(t))$. Since $f$ is expanding along $\mathscr{F}(x)$, without loss of generality, there is $\alpha\in (0,1)$ such that $|\Psi'(t)|=||Df|_{E^{\sigma}(\gamma(t))}||>\alpha^{-1}>1$. Taking the derivative of $h\gamma\Psi=hf\gamma=Ah\gamma$ at $0$, we have that $\Psi'(0)=\lambda^{\sigma}$. Taking the derivative of
$h\gamma\Psi^n=hf^n\gamma=A^nh\gamma$ at $\Psi^{-n}t$, we have that
$$||Dh|_{E^{\sigma}(\gamma(t))}||=|\prod_{i=1}^n\frac{\lambda^{\sigma}}{\Psi'(\Psi^{-i}(t))}|\cdot||Dh|_{E^{\sigma}(\gamma(\Psi^{-n}t)))}||.$$
Define $$F_n:\RR\to\RR,\quad F_n(t)=\prod_{i=1}^n\frac{\Psi'(\Psi^{-i}(t))}{\lambda^{\sigma}}.$$
\indent For any $t_0\in\RR$, there is an analytic continuation $\Phi$ of $\Psi$,
$$\Phi:\Omega=(-|t_0|-1,|t_0|+1)\times i(-\xi,\xi)\to \Phi(\Omega), \quad |\xi|<\frac{1}{2},$$
such that $|\Phi'(z)|>\alpha^{-1}>1$. Denote
$U= (-|t_0|-\xi,|t_0|+\xi)\times i(-\xi,\xi)$. Since $\Phi^k(k\in\NN)$ is expanding, we have $\Phi^k(\Omega)\supset U$, i.e., $\Omega\supset\Phi^{-k}U$. Then $F_n\in C^{\omega}(U)$ for all $n\in\NN$.\\
\indent We will prove that $F_n$ is uniformly bounded on $U$.
There is a constant $L$ such that $|\ln \Phi'(z_1)-\ln \Phi'(z_2)|\leq L|z_1-z_2|$ for any $z_1,z_2\in U$.
For any $z=x+iy\in U$, we have that
\begin{equation*}
\begin{split}
\ln \left|\frac{F_n(z)}{F_n(x)}\right|&\leq \sum_{i=1}^{+\infty}|\ln \Phi'(\Phi^{-i}z)-\ln \Phi'(\Phi^{-i}x)|\\
&\leq L\sum_{i=1}^{+\infty}|\Phi^{-i}z-\Phi^{-i}x|\\
&\leq \frac{L}{1-\alpha}|z-x|\leq \frac{L\xi}{1-\alpha}.
\end{split}
\end{equation*}
By Livschitz Theorem, there is a H\"older function $\psi:\mathbb{T}^2\to \RR$
$$\ln ||Df|_{E^{\sigma}(y)}||=\psi(y)-\psi(fy)+\ln \lambda^{\sigma}.$$
Thus
$$\ln |\Psi'(t)|=\ln ||Df|_{E^{\sigma}}(\gamma(t))||=\psi(\gamma(t))-\psi(\gamma(\Psi(t)))+\ln \lambda^{\sigma}.$$
It implies that $\ln |F_n(x)|\leq 2||\psi||_{\infty}<+\infty$ for $x\in \RR$.
So $\ln |F_n(z)|\leq 2||\psi||_{\infty}+\frac{L\xi}{1-\alpha}=M<+\infty$ for all $z\in U$.\\
\indent We will also prove that $F_n$ is pointwise convergent on $U$. Since for any $p\in\NN$,
$$\frac{F_{n+p}(z)}{F_n(z)}=\prod_{i=n+1}^{n+p}\frac{\Phi'(\Phi^{-i}z)}{\lambda^{\sigma}}=\exp(\sum_{i=n+1}^{n+p}\ln \Phi'(\Phi^{-i}z)-\ln \Phi'(0))$$
and
$$|\sum_{i=n+1}^{n+p}\ln \Phi'(\Phi^{-i}z)-\ln \Phi'(0)|\leq L\sum_{i=n+1}^{n+p}|\Phi^{-i}z|\leq L\sum_{i=n+1}^{n+p}\alpha^{i}|z|\leq \frac{L}{1-\alpha}|z|\alpha^n\to 0\quad (n\to +\infty),$$
we have that
$$|F_n(z)-F_{n+p}(z)|=|F_n(z)|\cdot |1-\frac{F_{n+p}(z)}{F_n(z)}|\leq e^M\cdot |1-\frac{F_{n+p}(z)}{F_n(z)}|\to 0 $$
when $n$ goes to infinity. So $F_n$ is pointwise convergent to $F$ on $U$.\\
\indent Now we will prove that $F_n$ is uniformly convergent to $F$ on $K$, where $K$ is a compact set in $U$. By Cauchy inequality, since $F_n$ is uniformly bounded on $U$, $F_n'$ is uniformly bounded on $K$, thus $F_n$ is equicontinuous on $K$. For any $\epsilon>0$, there is $\delta>0$, such that for any $n\in\NN$, $|F_n(z_1)-F_n(z_2)|<\frac{1}{3}\epsilon$ for any $|z_1-z_2|\leq \delta$ in $K$. There are $w_1,\dots, w_k$ in $K$ such that $\cup_{1\leq i\leq k} B(w_i,\frac{1}{2}\delta)\supset K$. There is $N$ such that for any $1\leq i\leq k$, $p\in \NN$, and $n>N$, we have $|F_n(w_i)-F_{n+p}(w_i)|<\frac{1}{3}\epsilon$. For any $z\in K$, there is $1\leq j\leq k$, s.t. $z\in B(w_j, \frac{1}{2}\delta)$. For any $n>N$, we have that
$$|F_n(z)-F_{n+p}(z)|\leq |F_n(z)-F_n(w_j)|+|F_n(w_j)-F_{n+p}(w_j)|+|F_{n+p}(w_j)-F_{n+p}(z)|<\epsilon.$$
\indent By Morera's Theorem, we can get that $F$ is analytic on $U$, so $F$ is analytic at $t_0$.
So $||Dh|_{E^{\sigma}(\gamma(t))}||=|F(t)^{-1}|\cdot||Dh|_{E^{\sigma}(\gamma(0))}||$ is analytic.\\
\indent Since $\mathscr{A}^{\sigma}(0)$ is dense in $\TT^2$, $h^{-1}\mathscr{A}^{\sigma}(0)=\mathscr{F}^{\sigma}(h^{-1}0)$ is dense in $\TT^2$. For any $y$ in $\TT^2$, there are $s_n$ such that $\gamma(s_n)\to y$ when $n\to +\infty$.\\
\indent Since $f$ is analytic at $\TT^2$, we claim that $\xi$ can be chosen independently of $t_0$.
Consider homeomorphisms $\theta: U\to {\rm Emb^{\omega}}([0,1], \TT^2)$ and $\theta': fU\to {\rm Emb^{\omega}}([0,1], \TT^2)$. Denote $G_x(t)=\theta'(fx)^{-1}\circ f\circ \theta(x)(t)$, which is analytic about $t$ and varies continuously about $x$.
Denote by $R(x)$ the analytic radius of $G_x(t)$ at $0$.
Denote
$$B(x)=\{y\in U:|(G_x)^{(k)}(0)-(G_y)^{(k)}(0)|\leq 1, \forall k\in\ZZ_{\geq 0}\}.$$
We will prove that $R(y)\geq R(x)$ for any $y\in B(x)$.
For any $\rho>\frac{1}{R(x)}$,
$$\limsup_{k\to +\infty}\sqrt[k]{\frac{|(G_x)^{(k)}(0)|}{k!}}=\frac{1}{R(x)}<\rho,$$
so there exists $K_1$ such that $|(G_x)^{(k)}(0)|<k!\cdot \rho^k$ for any $k>K_1$.
Let $\lceil\frac{1}{\rho}\rceil=N$, then $\rho(N+1)>1$. So
$$k!\cdot\rho^k>N!\rho^N\cdot(\rho(N+1))^{k-N}\to +\infty\quad (k\to +\infty).$$
There exists $K_2>K_1$ such that $k!\cdot\rho^k>1$ for any $k>K_2$. Then
\begin{equation*}
\begin{split}
\frac{1}{R(y)}&=\limsup_{k\to +\infty} \sqrt[k]{\frac{|(G_y)^{(k)}(0)|}{k!}}\\
&\leq \limsup_{k\to +\infty}
\sqrt[k]{\frac{|(G_x)^{(k)}(0)|+1}{k!}}\leq \limsup_{k\to +\infty}\sqrt[k]{\frac{k!\cdot \rho^k+1}{k!}}\\
&\leq \lim_{k\to +\infty} \sqrt[k]{\frac{2k!\cdot \rho^k}{k!}}=\rho
\end{split}
\end{equation*}
Since $\rho$ is arbitrary, we get that $\frac{1}{R(y)}\leq \frac{1}{R(x)}$, i.e., $R(y)\geq R(x)$.
For any $x\in\TT^2$, there exists $B(x)$ such that $R(y)\geq R(x)$. Since $\TT^2$ is compact, there are $x_1,\dots, x_m$ such that $\{B(x_i)\}_{i=1}^m$ cover $\TT^2$. Let $\xi=\min_{1\leq i\leq m}R(x_i)$, then $\inf_{y\in \TT^2}R(y)\geq \xi$. Thus the analytic continuation $\Phi$ of $\Psi$ can be defined on the uniform $\xi$ neighborhood of real line.\\
\indent Notice that
$$||Dh|_{E^{\sigma}(y)}||=\lim_{n\to+\infty}||Dh|_{E^{\sigma}(\gamma(s_n))}||=\lim_{n\to+\infty}|F(s_n)^{-1}|\cdot||Dh|_{E^{\sigma}(\gamma(0))}||.$$
Since $F(s_n+z)$ is analytic when $|z|<\xi$, uniformly bounded, and equicontinuous, by Ascoli Lemma and Morera's Theorem, $||Dh|_{E^{\sigma}(\cdot)}||$ is analytic along $\mathscr{F}^{\sigma}(y)$.
\end{proof}
If $f$ is analytic, by Theorem \ref{conjugacy analytic}, we know that $h$ is analytic along $\mathscr{F}^c$ and $\mathscr{F}^u$. Furthermore, by de la Llave's analytic version of Journe's Lemma \cite{dlL97}, we know that $h$ is analytic.\\
\indent This finishes the proof of the 4th item of Main Theorem.
\section{References}
\begingroup
\renewcommand{\section}[2]{}

\endgroup

\noindent Daohua Yu, School of Mathematical Sciences, Peking University, Beijing 100871, China\\
email: yudh@pku.edu.cn


\begin{thebibliography}{99}
\bibitem{AGGS}
\newblock An, Jinpeng; Gan, Shaobo; Gu, Ruihao; Shi, Yi.
\newblock Rigidity of stable Lyapunov exponents and integrability for Anosov maps.
\newblock Comm. Math. Phys.402(2023), no.3, 2831-2877.

\bibitem{Arteaga}
\newblock Arteaga, Carlos.
\newblock Differentiable conjugacy for expanding maps on the circle.
\newblock Ergodic Theory Dynam. Systems14(1994), no.1, 1-7.

\bibitem{Andersson}
\newblock Andersson, Martin.
\newblock Transitivity of conservative toral endomorphisms.
\newblock Nonlinearity29(2016), no.3, 1047-1055.

\bibitem{AnderssonRanter}
\newblock Andersson, Martin; Ranter, Wagner.
\newblock Partially hyperbolic endomorphisms with expanding linear part.
\newblock Ergodic Theory Dynam. Systems45(2025), no.2, 321-336.

\bibitem{DPU}
\newblock Diaz, Lorenzo J.; Pujals, Enrique R.; Ures, Raul.
\newblock Partial hyperbolicity and robust transitivity.
\newblock Acta Math.183(1999), no.1, 1-43.

\bibitem{de la Llave 0}
\newblock de la Llave, R.
\newblock Invariants for smooth conjugacy of hyperbolic dynamical systems. II.
\newblock Comm. Math. Phys.109(1987), no.3, 369-378.


\bibitem{de la Llave}
\newblock de la Llave, R.
\newblock Smooth conjugacy and S-R-B measures for uniformly and non-uniformly hyperbolic systems.
\newblock Comm. Math. Phys.150(1992), no.2, 289-320.

\bibitem{CFD}
\newblock Cabre, X.; Fontich, E.; de la Llave, Ra.
\newblock The parameterization method for invariant manifolds. I. Manifolds associated to non-resonant subspaces.
\newblock Indiana Univ. Math. J.52(2003), no.2, 283-328.

\bibitem{dlL97}
\newblock de la Llave, R.
\newblock Analytic regularity of solutions of Livsic's cohomology equation and some applications to analytic conjugacy of hyperbolic dynamical systems.
\newblock Ergodic Theory Dynam. Systems 17 (1997), no. 3, 649-662.

\bibitem{Franks}
\newblock Franks, John.
\newblock Anosov diffeomorphisms.
\newblock Global Analysis (Proc. Sympos. Pure Math., Vols. XIV, XV, XVI, Berkeley, Calif., 1968), pp. 61-93
Proc. Sympos. Pure Math., XIV-XVI. American Mathematical Society, Providence, RI, 1970

\bibitem{Gromov}
\newblock Gromov, Mikhael.
\newblock Groups of polynomial growth and expanding maps.
\newblock Inst. Hautes Etudes Sci. Publ. Math.(1981), no.53, 53-73.

\bibitem{Ganshi}
\newblock Gan, Shaobo; Shi, Yi.
\newblock Rigidity of center Lyapunov exponents and su-integrability.
\newblock Comment. Math. Helv.95(2020), no.3, 569-592.

\bibitem{G}
\newblock Gogolev, Andrey.
\newblock Bootstrap for local rigidity of Anosov automorphisms on the 3-torus.
\newblock Comm. Math. Phys.352(2017), no.2, 439-455.

\bibitem{G08}
\newblock Gogolev, Andrey; Guysinsky, Misha.
\newblock $C^1$-differentiable conjugacy of Anosov diffeomorphisms on three dimensional torus.
\newblock Discrete Contin. Dyn. Syst.22(2008), no.1-2, 183-200.

\bibitem{GS}
\newblock Gu, Ruihao; Shi Yi.
\newblock Topological and smooth classification of Anosov maps on torus.
\newblock arXiv:2212.11457 [math.DS]

\bibitem{GX}
\newblock Gu, Ruihao; Xia, Mingyang.
\newblock Semi-conjugacy rigidity for endomorphisms derived from Anosov on the 2-torus.
\newblock arXiv:2311.12669 [math.DS]

\bibitem{HPS}
\newblock Hirsch, Morris W.; Pugh, Charles; Shub, Michael.
\newblock Invariant manifolds.
\newblock Bull. Amer. Math. Soc. 76 (1970), 1015-1019.

\bibitem{HH}
\newblock Hector, Gilbert; Hirsch, Ulrich.
\newblock Introduction to the Geometry of Foliations. Part B. Foliations of Codimension One,
\newblock Second Edition, Aspects of Mathematics, E3, Friedr. Vieweg and Sohn, Braunschweig, 1987.

\bibitem{Heshiwang}
\newblock He, Baolin; Shi, Yi; Wang, Xiaodong.
\newblock Dynamical coherence of specially absolutely partially hyperbolic endomorphisms on $\mathbb{T}^2$.
\newblock Nonlinearity32(2019), no.5, 1695-1704.

\bibitem{HallHammerlindl}
\newblock Hall, Layne; Hammerlindl, Andy.
\newblock Partially hyperbolic surface endomorphisms.
\newblock Ergodic Theory Dynam. Systems41(2021), no.1, 272-282.

\bibitem{HallHammerlindl2}
\newblock Hall, Layne; Hammerlindl, Andy.
\newblock Dynamically incoherent surface endomorphisms.
\newblock J. Dynam. Differential Equations35(2023), no.4, 3651-3663.

\bibitem{coherence}
\newblock Hall, Layne; Hammerlindl, Andy
\newblock Classification of partially hyperbolic surface endomorphisms.
\newblock Geom. Dedicata216(2022), no.3, Paper No. 29, 19 pp.

\bibitem{J}
\newblock Journe, Jean-Lin.
\newblock A regularity lemma for functions of several variables.
\newblock Rev. Mat. Iberoamericana4(1988), no.2, 187-193.

\bibitem{ManePugh}
\newblock Mane, Ricardo; Pugh, Charles.
\newblock Stability of endomorphisms.
\newblock Dynamical systems-Warwick 1974 (Proc. Sympos. Appl. Topology and Dynamical Systems, Univ. Warwick, Coventry, 1973/1974; presented to E. C. Zeeman on his fiftieth birthday), pp. 175-184
Lecture Notes in Math., Vol. 468 Springer-Verlag, Berlin-New York, 1975

\bibitem{Mane}
\newblock Mane, Ricardo.
\newblock Contributions to the stability conjecture.
\newblock Topology 17 (1978), no. 4, 383-396.

\bibitem{M}
\newblock Micena, Fernando.
\newblock Rigidity for some cases of Anosov endomorphisms of torus.
\newblock Discrete Contin. Dyn. Syst.43(2023), no.8, 3082-3097.

\bibitem{MR}
\newblock Cantarino, Marisa; Varao, Regis.
\newblock Anosov endomorphisms on the two-torus: regularity of foliations and rigidity.
\newblock Nonlinearity36(2023), no.10, 5334-5357.

\bibitem{PSW}
\newblock Pugh, Charles; Shub, Michael; Wilkinson, Amie.
\newblock Holder foliations.
\newblock Duke Math. J. 86 (1997), no. 3, 517-546.

\bibitem{Potrie}
\newblock Rafael Potrie Altieri.
\newblock Partial hyperbolicity and attracting regions in 3-dimensional manifolds.
\newblock PhD thesis. arXiv:1207.1822

\bibitem{Shub}
\newblock Shub, Michael.
\newblock Endomorphisms of compact differentiable manifolds.
\newblock Amer. J. Math.91(1969), 175-199.

\bibitem{Shub2}
\newblock Shub, Michael.
\newblock Topologically transitive diffeomorphisms of $\TT^4$.
\newblock In David Chillingworth, editor, Proceedings of the Symposium on Differential Equations and Dynamical Systems, pages 39-40, Berlin, Heidelberg, 1971. Springer Berlin Heidelberg.

\end{thebibliography}
\end{document}